\documentclass[a4paper,12pt]{article} 

\usepackage{color}

\usepackage{amsfonts, amsmath, amsthm, amssymb}
\usepackage[T1]{fontenc}
\usepackage[cp1250]{inputenc}
\usepackage{xcolor}
\usepackage{graphicx}
\usepackage{amssymb}
\usepackage{amsmath}
\usepackage{mathptmx}
\usepackage{helvet}
\usepackage{courier}
\usepackage{txfonts}
\usepackage{tikz} 
\usetikzlibrary{arrows}
\usepackage{type1cm}

\usepackage{verbatim}

\usepackage{graphicx}
\usepackage{epsfig,amscd,amssymb,amsxtra,amsmath,amsthm}
\usepackage{type1cm}
\usepackage[T1]{fontenc}
\usepackage{graphics}
\usepackage[mathscr]{eucal}
\usepackage[all]{xy}
\usepackage{amsmath,amscd}

\newtheorem{theorem}{Theorem}[section]
\newtheorem{prop}[theorem]{Proposition}
\newtheorem{definition}[theorem]{Definition}
\newtheorem{lemma}[theorem]{Lemma}

\newtheorem{example}[theorem]{Example}
\newtheorem{remark}[theorem]{Remark}

\newtheorem{corollary}[theorem]{Corollary}
\newtheorem{problem}[theorem]{Problem}
\newtheorem{observation}[theorem]{Observation}

\newcommand{\diam} {\mathop{\rm diam}\nolimits}

\newcommand{\N}{\mathbb{N}}
\newcommand{\Z}{\mathbb{Z}}

\newcommand{\xx}{\mathbf{x}}
\newcommand{\yy}{\mathbf{y}}
\newcommand{\zz}{\mathbf{z}}
\renewcommand{\aa}{\mathbf{a}}
\newcommand{\bb}{\mathbf{b}}
\newcommand{\cc}{\mathbf{c}}
\newcommand{\dd}{\mathbf{d}}

\newcommand{\se}{\subseteq}
\newcommand{\e}{\epsilon}

\begin{document}

\title{The Specification Property on the Lelek Fan}
\author{Goran Erceg, James Kelly, Judy Kennedy, Christopher Mouron, Van Nall}
\maketitle

\begin{abstract} 
	Recent work of Piotr Oprocha and his collaborators (see \cite{KKO1}, \cite{KKO2}, \cite{KLO}, e.g.) has provided a number of delicate examples of dynamical systems separating specification, shadowing, and periodic-point density, primarily in symbolic or totally disconnected spaces. The goal of the present paper is to demonstrate that similar - and in some cases sharper - separations occur on the Lelek fan, a smooth one-dimensional continuum. 

	Our constructions rely on Mahavier products of closed relations. By carefully choosing relations on the unit interval, we obtain Mahavier products that are homeomorphic to the Lelek fan whose associated shift maps display diverse dynamical behavior. This approach yields a unified framework for producing and analyzing examples on a familiar continuum.  
	
\end{abstract}
\-
\\
\noindent
{\it Keywords:} Mahavier dynamical systems; CR-dynamical systems; closed relations; Lelek fan; specification property\\
\noindent
{\it 2020 Mathematics Subject Classification:} 37B02, 37B45, 54C60, 54F15, 54F17
\vphantom{}

\vphantom{}

\section{Introduction}
The specification property, introduced by Rufus Bowen (see \cite{bowen}) in 1971 for investigating  Axiom A diffeomorphisms, builds on earlier work of Stephen Smale on hyperbolic systems. It formalizes the ability of a dynamical system to trace arbitrary finite collections of orbit segments by a single orbit, provided there is sufficient spacing between orbit segments. Closely related notions include shadowing, mixing, transitivity, and density of periodic points. Although these properties coincide in many hyperbolic settings, they can be separated in more general topological dynamics.  

The tent map, defined by $T(x)=1-|1-2x|$ for $x \in [0,1],$ is a fundamental, elementary example in chaotic dynamics, exhibiting sensitive dependence on initial conditions, dense periodic points, and topological mixing, and it also has the specification property. The specification property has a serious connection to thermodynamic formalism as it allows for the application of advanced results in ergodic theory. It guarantees the uniqueness of equilibrium states (Gibbs measures) and a strong relationship between topological entropy, potentials, pressure, and ergodic theory.

We construct continuous surjections on the Lelek fan exhibiting a wide range of dynamical behaviors. Using Mahavier products of closed relations, we produce examples that separate classical dynamical properties such as transitivity, mixing, the specification property, shadowing, density of periodic points, and positive entropy. In particular, we show that on the Lelek fan one may have: (i) Devaney chaos without shadowing; (ii) transitivity and positive entropy with a unique periodic point but without specification, shadowing, or mixing; and (iii) mixing together with the specification property but without shadowing. These examples extend earlier constructions in symbolic and totally disconnected settings to a classical one-dimensional continuum. None of our examples has the shadowing property.

\section{Definitions, Notation and Background}

\begin{definition} Let $X$ be a compact metric space. We use $\mathbb{N}$ to denote the positive integers; $\mathbb{N}_0$ to denote the nonnegative integers; and $\mathbb{Z}$ to denote the integers.

\end{definition}

\begin{definition} Let $X$ be a compact metric space. We use $d$ to denote the metric on $X$, unless otherwise specified. We use $d_H$ to denote the Hausdorff metric on the space of closed subsets of $X$, unless otherwise specified. For the product space $X^{\infty}$, we use the convention that a point $\mathbf{x} \in X^{\infty}$ has coordinates $x_i, i \ge 1$, i.e., $\mathbf{x}=(x_1,x_2, \ldots)$, and we use the metric 

$$D(\mathbf{x},\mathbf{y})= \sup \left\lbrace \frac{|x_i-y_i|}{2^i}:i \ge 1 \right\rbrace.$$

\end{definition}

\begin{definition}(\emph{The standard projection} $\pi_i$.) Suppose $X$ is a space. If $n$ is a positive integer, and $\mathbf{x}=(x_1,x_2, \ldots, x_n) \in X^n$, then $\pi_i(\mathbf{x}) = x_i$ for $i \in \{1,2,\dots,n\}$. If $\mathbf{x}=(x_1,x_2, \ldots) \in X^{\infty}$, then $\pi_i(\mathbf{x}) = x_i$ for $i$ each positive integer $i$. If $m \le n$, $m$ and $n$ are integers, and if $\mathbf{x}=(x_1,x_2,\dots)$, then $\pi_{[m,n]}(\mathbf{x})=(x_m,x_{m+1},\ldots, x_n)$, when $0 <m \le n$, $m,n$ integers.
	
\end{definition}

\begin{definition} A \emph{continuum} is a nonempty compact connected metric space. A continuum is \emph{degenerate} if it consists of exactly one point. Otherwise it is \emph{nondegenerate}. A subspace of a continuum which is itself a continuum is a \emph{subcontinuum}. 
	
\end{definition}

\begin{definition} Let $X$ and $Y$ be metric spaces, and let $f:X \to Y$ be a function. We use $\Gamma(F)= \{(x,y): y=f(x) \}$ to denote the \emph{graph of the function} $f$.
	
\end{definition}

 \begin{definition}
 Let $X$ be a compact metric space and let $G \subset X \times X$ be a relation on $X$. If $G$ is closed in $X \times X$, then $G$ is a \emph{closed relation on} $X$.  
 \end{definition}

\begin{definition} Let $X$ be a set and let $G$ be a relation on $X$. Then we define $G^{-1}=\{ (y,x): (x,y) \in G \}$. Also, $G^{-1}$ is the \emph{inverse relation of the relation $G$ on} $X$.
	
\end{definition}

\begin{definition} A \emph{dynamical system} is a pair $(X,f)$ where $X$ is a compact metric space and $f:X \to X$ is continuous. We often identify the dynamical system $(X,f)$ with the map $f:X \to X$.

\end{definition}

\begin{definition} If $X$ is a compact metric space and $f : X \to X$ is continuous, we say that the point $x \in X$ is a \emph{periodic point} for $f$ if $f^ n(x) = x$ for some $n \in \mathbb{N}$ and call $n$ a \emph{period} for $x$. If $f (x) = x$, we say $x$ is a \emph{fixed point} for $f$.
	
\end{definition} 

\begin{definition}
Let $(X, f )$ be a dynamical system. We say that $(X, f )$ is \emph{transitive} if for all non-empty open sets $U$ and $V$ in $X$, there is a non-negative integer $n$ such that $f ^n(U ) \cap V \ne \emptyset$. We say that the mapping $f$ is \emph{transitive} if $(X, f )$ is transitive.
	
\end{definition}

\begin{definition} Let $(X, f )$ be a dynamical system. We say that $(X, f )$ is \emph{topologically mixing} if for all non-empty open sets $U$ and $V$ in $X$, there is a non-negative integer $n_0$ such that for each positive integer $n$,
$n \ge n_0$ implies $f^n(U ) \cap  V \ne  \emptyset$. We say that the mapping $f$ is \emph{mixing} if $(X,f)$ is mixing.
\end{definition}

 \begin{definition}
 
 Let $(X, f )$ and $(Y, g)$ be dynamical systems. If there is a homeomorphism $\phi : X \to Y$ such that
$$ \phi \circ  f = g \circ \phi,$$
then we say that $(X, f )$ and $(Y, g)$ are \emph{topological conjugates}.

 \end{definition} 
 
\begin{definition}\label{def:delta-pseudo}
	Let $(X,f)$ be a dynamical system, let $\delta>0$ and let $(x_0, x_1, x_2, x_3, \ldots)$
	be a sequence in $X$. We say that $(x_0, x_1, x_2, x_3, \ldots)$ is a \emph{$\delta$-pseudo-orbit} in $(X,f)$ if for each non-negative integer $n$,
	\[
	d(f(x_n), x_{n+1}) < \delta.
	\]
\end{definition}

\begin{definition}\label{def:shadowing}
	Let $(X,f)$ be a dynamical system. We say that $(X,f)$ has the \emph{shadowing property} if for each $\varepsilon > 0$, there exists $\delta > 0$ such that for each $\delta$-pseudo-orbit $(x_k)$ in $(X,f)$, there exists $y \in X$ such that for each non-negative integer $k$,
	\[
	d(f^k(y), x_k) < \varepsilon.
	\]
\end{definition}

\begin{observation}\label{obs:conjugate-shadowing}
	Let $(X,f)$ and $(Y,g)$ be dynamical systems such that $(X,f)$ has the shadowing property.
	If $(Y,g)$ is topologically conjugate to $(X,f)$, then $(Y,g)$ also has the shadowing property.
\end{observation}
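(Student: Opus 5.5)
The plan is the standard transfer of shadowing through a conjugacy. The only ingredient beyond the definitions is compactness: a homeomorphism between compact metric spaces is uniformly continuous in both directions. Let $\phi: X \to Y$ be the homeomorphism with $\phi \circ f = g \circ \phi$. Then $\psi = \phi^{-1}: Y \to X$ satisfies $\psi \circ g = f \circ \psi$, and by induction $\psi \circ g^k = f^k \circ \psi$ and $\phi \circ f^k = g^k \circ \phi$ for all $k \ge 0$. Write $d_X$ and $d_Y$ for the metrics on $X$ and $Y$.

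Fix $\varepsilon > 0$ and choose the constants in order:
\begin{itemize}
\item By uniform continuity of $\phi$, pick $\eta > 0$ such that $d_X(a,b) < \eta$ implies $d_Y(\phi(a),\phi(b)) < \varepsilon$.
\item Apply the shadowing property of $(X,f)$ with $\eta$ in place of $\varepsilon$ to obtain $\delta_X > 0$.
\item By uniform continuity of $\psi$, pick $\delta > 0$ such that $d_Y(u,v) < \delta$ implies $d_X(\psi(u),\psi(v)) < \delta_X$.
\end{itemize}

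Now let $(y_k)$ be a $\delta$-pseudo-orbit in $(Y,g)$, and set $x_k = \psi(y_k)$. Then
\[
d_X(f(x_k), x_{k+1}) = d_X(\psi(g(y_k)), \psi(y_{k+1})) < \delta_X,
\]
because $d_Y(g(y_k), y_{k+1}) < \delta$. So $(x_k)$ is a $\delta_X$-pseudo-orbit in $(X,f)$, and shadowing in $X$ gives a point $z \in X$ with $d_X(f^k(z), x_k) < \eta$ for all $k$.

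Put $y = \phi(z)$. Then
\[
d_Y(g^k(y), y_k) = d_Y(\phi(f^k(z)), \phi(x_k)) < \varepsilon
\]
for all $k$, which is the required shadowing in $(Y,g)$.

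There is no real obstacle. The one point needing care is that the argument uses uniform continuity rather than pointwise continuity, so that $\delta$ and $\eta$ do not depend on the pseudo-orbit. This is exactly where compactness of $X$ and $Y$ (built into the definition of a dynamical system) is used.
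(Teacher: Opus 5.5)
Your proof is correct: you pull the $\delta$-pseudo-orbit back by $\psi=\phi^{-1}$, apply shadowing in $(X,f)$, and push the shadowing point forward by $\phi$, choosing the constants via uniform continuity on the compact spaces — this is the standard argument. The paper states this as an observation without giving a proof, so your argument supplies exactly the routine verification it takes for granted.
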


\begin{definition}
	Let $(X,f)$ be a dynamical system, let $x\in X$ and let $k,\ell$ be non-negative integers. If $k\leq \ell$, then we say that 
	$$
	f^{[k,\ell]}(x)=\Big(f^k(x),f^{k+1}(x),f^{k+2}(x),\ldots,f^{\ell}(x)\Big)
	$$
	is the \emph{$[k,\ell]$-orbit segment of the point $x$}.  
\end{definition}

\begin{definition}
	Let $(X,f)$ be a dynamical system, let $n$ be a positive integer and for each $j\in \{1,2,3,\ldots,n\}$, let 
	\begin{enumerate}
		\item $k_j$ and $\ell_j$ be non-negative integers such that $k_j\leq \ell_j$, and
		\item $x_j\in X$.
	\end{enumerate}
	We say that the $n-tuple$
	$$
	\Bigg(f^{[k_1,\ell_1]}(x_1),f^{[k_2,\ell_2]}(x_2),f^{[k_3,\ell_3]}(x_3),\ldots ,f^{[k_n,\ell_n]}(x_n)\Bigg)
	$$
	is \emph{an $n$-specification} or just \emph{a specification in $(X,f)$}.
\end{definition}
\begin{definition}
	Let $(X,f)$ be a dynamical system, let $N$ be a positive integer and let 
	$$
	\mathcal S=\Bigg(f^{[k_1,\ell_1]}(x_1),f^{[k_2,\ell_2]}(x_2),f^{[k_3,\ell_3]}(x_3),\ldots ,f^{[k_n,\ell_n]}(x_n)\Bigg)
	$$
	be a specification in $(X,f)$. We say that $\mathcal S$ is \emph{an $N$-spaced specification}, if for each $j\in\{1,2,3,\ldots,n-1\}$,
	$$
	k_{j+1}-\ell_j\geq N.
	$$
\end{definition}
\begin{definition}
	Let $(X,f)$ be a dynamical system, let $N$ be a positive integer, let $\varepsilon>0$, let $y\in X$ and let 
	$$
	\mathcal S=\Bigg(f^{[k_1,\ell_1]}(x_1),f^{[k_2,\ell_2]}(x_2),f^{[k_3,\ell_3]}(x_3),\ldots ,f^{[k_n,\ell_n]}(x_n)\Bigg)
	$$
	be an $N$-spaced specification in $(X,f)$. We say that \emph{$\mathcal S$ is $\varepsilon$-traced in $(X,f)$ by $y$} if for each $i\in \{1,2,3,\ldots,n\}$ and for each $j\in \{k_i,k_i+1,k_i+2,\ldots,\ell_i\}$,
	$$
	d(f^j(y),f^j(x_i))\leq \varepsilon.
	$$
\end{definition}
Finally, in Definition \ref{def1}, the specification property is defined.
\begin{definition}\label{def1}
	Let $(X,f)$ be a dynamical system. We say that \emph{$(X,f)$ has the specification property} if for each $\varepsilon >0$, there is a positive integer $N$ such that for any $N$-spaced specification $\mathcal S=\left(f^{[k_1,\ell_1]}(x_1),f^{[k_2,\ell_2]}(x_2),f^{[k_3,\ell_3]}(x_3),\ldots ,f^{[k_n,\ell_n]}(x_n)\right)$ in $(X,f)$, there is $y\in X$ such that $\mathcal S$ is $\varepsilon$-traced in $(X,f)$ by $y$. If, additionally, $y$ can be chosen such that $f^{\ell_n-k_1+N}(y)=y$ we say that \emph{$(X,f)$ has the periodic specification property.}
\end{definition}

\section{Mahavier products, closed relations, and CR-specification}

The previous section presented standard definitions, and notation from continuum theory and topological dynamics. In the next section we continue with definitions, observations, notation,  but the focus shifts to those developed in our work. For clarity, we separate these from the earlier material.

\begin{definition} Let $X$ be a compact metric space and let $G$ be a closed relation on $X$. 

\begin{itemize}
\item  We define the \emph{Mahavier product} $G \star 
G$ as  $\left\{(x,y,z): (x,y) \in G, (y,z) \in G\right\}$. If $\mathbf{a}=(x,y) \in G$  and $\mathbf{b}=(y,z) \in G$, then we define $\mathbf{a} \star \mathbf{b}$ to be the point $\mathbf{a} \star \mathbf{b}=(x,y,z) \in G \star G$.

\item If $m$ is a positive integer, we define the $m$-th \emph{Mahavier product} $\star_{i=1}^m G$ to be 
$\star_{i=1}^m G = \left\{(x_1,x_2, \ldots, x_{m+1}) \in \prod_{i=1}^{m+1} X: \text{ for each } i \in \{1,2,\ldots,m\} , (x_i,x_{i+1}) \in G \right\}.$ We abbreviate $\star_{i=1}^m G$ by $X_G^m$. 

\item If $n$ is an integer less than $m,$ we define \newline $\star_{i=n}^m G = \left\{(x_n,x_{n+1}, \ldots, x_{m+1}) \in \prod_{i=n}^{m+1} X: \text{ for each } i \in \{n,n+1,\ldots,m\} , (x_i,x_{i+1}) \in G \right\}.$

\item We define the  \emph{infinite Mahavier product} $\star_{i=1}^{\infty} G$ to be \newline 
$\star_{i=1}^{\infty} G = \left\{(x_1,x_2, \ldots) \in \prod_{i=1}^{\infty} X: \text{ for each positive integer } i, (x_i,x_{i+1}) \in G \right\}.$ We abbreviate $\star_{i=1}^{\infty} G$ by $X_G^+$.

\item We define the \emph{two-sided  Mahavier product} $\star_{i=-\infty}^{\infty} G$ to be \\ $\star_{i=-\infty}^{\infty} G=\left\{(\ldots,x_{-2},x_{-1},{x_0}{ ;}x_1,x_2,\ldots )\in \prod_{i={-\infty}}^{\infty}X \ | \ \text{ for each  integer } i, (x_{i},x_{i+1})\in {G}\right\}.$   We abbreviate $\star_{i=-\infty}^{\infty} G$ by $X_G$.

\item We define the function $$\sigma_G^+: X_G^+ \to X_G^+$$ given by $$\sigma_G^+(x_1,x_2, \ldots) = (x_2,x_3, \ldots)$$ for each 
$(x_1,x_2, \ldots) \in X_G^+$. The map $\sigma_G^+$ is called the \emph{shift map on} $X_G^+$, and it is a continuous function. (To see this, observe that $X_G^+$ is invariant under the shift $\sigma$ on $\Pi_{i=1}^{\infty} X$ defined by $\sigma(x_1,x_2, \dots)=(x_2,x_3, \dots)$ for $(x_1,x_2,\dots) \in \Pi_{i=1}^{\infty} X$. Since $\sigma$ is continuous, so is $\sigma|X_G^+ = \sigma_G^+$.) 

\item The function  $\sigma_G : {X_G} \rightarrow {X_G}$, 
defined by 
$$
\sigma_G (\ldots,x_{-2},x_{-1},{x_0};x_1,x_2,\ldots )=(\ldots,x_{-2},x_{-1},{x_0},x_1;x_2,\ldots )
$$
for each $(\ldots,x_{-2},x_{-1},{x_0};x_1,x_2,\ldots )\in {X_F}$, 
is called \emph{   the shift map on ${X_G}$}.    
\end{itemize}

\end{definition}

\begin{definition}
	Let $X$ be a {non-empty} compact metric space and let $F\subseteq X\times X$ be a closed relation on $X$. We say $(X,F)$ is \emph{a CR-dynamical system}.
\end{definition} 

\begin{definition}\label{luhca2}
	Let $(X,F)$ be a CR-dynamical system.  The dynamical system 
	\begin{enumerate}
		\item $(X_F^{+},\sigma_F^+)$ is called \emph{a Mahavier dynamical system}.
		\item $(X_F,\sigma_F)$ is called \emph{a two-sided Mahavier dynamical system}.
	\end{enumerate}
\end{definition}

 \begin{definition}
 	Let $\left(X,F\right)$ be a CR-dynamical system, let $\xx = (x (1),x (2),\ldots) \in X_F^+$ and let $k\leq\ell$ be positive integers.  We say that $\pi_{[k,\ell]}(\xx)=(x (k),x (k+1),\ldots,x (\ell))\in X^{\ell-k+1}$ is a $\left[k,\ell\right]$\emph{-orbit segment} of the point $\xx$. 	
 \end{definition}
 \begin{definition}
 Let $(X,F)$ be a CR-dynamical system, let $n$ be a positive integer, and for each $j \in \{1,2,
 \ldots,n\}$, let 
 \begin{enumerate}
 	\item $k_j$ and $l_j$ be positive integers such that $k_j \le l_j$, 
 	\item $\mathbf{x}_j \in X_F^+$, and 
 	\item $\pi_{[k_j,l_j]}(\mathbf{x}_j)$ be a $[k_j,l_j]$-orbit segment of the point $\mathbf{x}_j$.
 \end{enumerate}
 We say that the $n$-tuple $$\mathcal{S}= (\pi_{[k_1,l_1]}(\mathbf{x}_1),\pi_{[k_2,l_2]}(\mathbf{x}_2), \ldots ,\pi_{[k_n,l_n]}(\mathbf{x}_n))$$ is an $n$\emph{-CR specification} or a \emph{CR-specification} in $X_F^+$.
 \end{definition}

 \begin{definition}
 Let $(X,F)$ be a CR-dynamical system, let $N$ be a positive integer, and let $\mathcal{S}= (\pi_{[k_1,l_1]}(\mathbf{x}_1),\pi_{[k_2,l_2]}(\mathbf{x}_2), \ldots ,\pi_{[k_n,l_n]}(\mathbf{x}_n))$ be a CR-specification in $X_F^+$. We say that $\mathcal{S}$ is an $N$\emph{-spaced specification} if, for each $j \in \{1,2,\ldots,n-1\}$, $$k_{j+1} - l_j \ge N.$$

 \end{definition}
 
 \begin{definition} 
 Let $(X,F)$ be a CR-dynamical system,  let $d$ be the metric on $X$, let $N$ be a positive integer, let $\epsilon >0$, let $\mathbf{y} \in X_F^+$, and let $$\mathcal{S}= (\pi_{[k_1,l_1]}(\mathbf{x}_1),\pi_{[k_2,l_2]}(\mathbf{x}_2), \ldots ,\pi_{[k_n,l_n]}(\mathbf{x}_n))$$ be an $N$\emph{-spaced specification} in $X_F^+$. We say that $\mathcal{S}$ is $\epsilon$\emph{-traced by} $\mathbf{y}$ if for each $i\in \{1,2,\ldots\,n \}$, and for each $j \in \{k_i,k_{i+1}, \ldots,l_i \}$, $$d(\mathbf{x}_i(j), \mathbf{y}(j)) \le \epsilon.$$

 \end{definition}
 
 \begin{definition} Let $F \subseteq X \times X$ be a closed relation on $X$. We say that $X_F^+$ has the \emph{CR-specification property} if for each $\epsilon >0$, there is a positive integer $N$ such that for any $N$-spaced specification  $\mathcal{S}$ in $X_F^+$, there is $\mathbf{y} \in X_F^+$ such that $\mathcal{S}$ is $\epsilon$-traced in $X_F^+$ by $\mathbf{y}$.
 	
 \end{definition}
 
 \begin{theorem} \label{CR-spec} Let $F \subseteq X \times X$ be a closed relation on $X$. The shift map $\sigma^+_F:X_F^+ \to X_F^+$ has the specification property if and only if $X_F^+$ has the CR-specification property.  	
 \end{theorem}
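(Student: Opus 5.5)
Here is how I would prove Theorem \ref{CR-spec}. The equivalence comes from translating between the metric $D$ on $X_F^+$ and the metric $d$ on the coordinates. Write $\sigma=\sigma_F^+$, and recall that $\sigma^j(\mathbf{x})=(x_{j+1},x_{j+2},\ldots)$, so that $\pi_1(\sigma^{j}(\mathbf{x}))=x_{j+1}$. The indexing shift by one (orbit segments $f^{[k,\ell]}$ start at $k\ge 0$, CR orbit segments at $k\ge1$) only relabels indices and does not change any spacing. Two elementary comparisons drive everything:
\begin{itemize}
\item[(a)] $d(x_1,y_1)\le 2D(\mathbf{x},\mathbf{y})$, since $D(\mathbf{x},\mathbf{y})\ge |x_1-y_1|/2$. (Here $d$ is the metric on $X$; I write it as $|\cdot|$ in the definition of $D$, as the paper does.)
\item[(b)] Fix $M$ with $\operatorname{diam}(X)/2^{M+1}<\epsilon$. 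If $d(x_i,y_i)\le\epsilon$ for $i=1,\ldots,M$, then $D(\mathbf{x},\mathbf{y})\le\epsilon$.
\end{itemize}

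\emph{Specification implies CR-specification.} Let $\epsilon>0$. Take the $N$ that the specification property gives for $\epsilon/2$. Given an $N$-spaced CR-specification $(\pi_{[k_j,l_j]}(\mathbf{x}_j))_j$, form the classical specification $(\sigma^{[k_j-1,l_j-1]}(\mathbf{x}_j))_j$. It is still $N$-spaced. Let $\mathbf{y}$ be an $\epsilon/2$-tracing point. Then for $m\in[k_j,l_j]$, (a) gives $d(x_j(m),y(m))\le 2D(\sigma^{m-1}\mathbf{x}_j,\sigma^{m-1}\mathbf{y})\le\epsilon$.

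\emph{CR-specification implies specification.} Let $\epsilon>0$, choose $M$ as in (b), and let $N'$ be the CR-spacing constant for $\epsilon$. Set $N=N'+M$. Given an $N$-spaced specification $(\sigma^{[k_j,\ell_j]}(\mathbf{x}_j))_j$, I would pass to the CR-specification with windows $[k_j+1,\ell_j+M]$ of the same points $\mathbf{x}_j$. These enlarged windows record every coordinate that matters, up to weight $2^{-M}$, for the shifted points $\sigma^m\mathbf{x}_j$ with $m\in[k_j,\ell_j]$. The new gaps are $(k_{j+1}+1)-(\ell_j+M)\ge N-M+1\ge N'$, so the CR-specification property yields $\mathbf{y}$ with $d(x_j(i),y(i))\le\epsilon$ for all $i\in[k_j+1,\ell_j+M]$. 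Now fix $m\in[k_j,\ell_j]$. The first $M$ coordinates of $\sigma^m\mathbf{x}_j$ and $\sigma^m\mathbf{y}$ are indices $m+1,\ldots,m+M$, which lie in the window. Hence by (b), $D(\sigma^m\mathbf{x}_j,\sigma^m\mathbf{y})\le\epsilon$, and the original specification is $\epsilon$-traced.

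\emph{Main obstacle.} The only real issue is the second direction. Closeness in $D$ needs control of several future coordinates, not just one. This is handled by lengthening each window by $M$ and paying for it with the extra spacing $M$. I would check the index bookkeeping carefully, and also handle the degenerate case where $X$ is a single point (there $M$ can be taken to be $1$).
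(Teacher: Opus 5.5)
Your proposal is correct and follows essentially the same route as the paper. For CR-specification $\Rightarrow$ specification, both of you lengthen each window by a tail-cutoff length and absorb that length into the spacing. For the converse, both of you shift each window down by one and use $d(x_1,y_1)\le 2D(\mathbf{x},\mathbf{y})$ with tracing constant $\epsilon/2$.

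Your version is slightly more careful than the paper's in two places. You choose the cutoff using $\operatorname{diam}(X)$, whereas the paper implicitly assumes $\operatorname{diam}(X)\le 1$. Your windows $[k_j+1,\ell_j+M]$ also handle the index offset and the case $k_j=0$ correctly.
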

 
 \begin{proof} Assume $F \subseteq X \times X$ is a closed relation on $X$ and $X_F^+$ has the CR-specification property. Let $\epsilon >0$. Then there is an $M \in \mathbb{N}$ such that every $M$-spaced CR-specification in $X_F^+$ is $\epsilon$-traced by some $\mathbf{y} \in X_F^+$. Choose $N \in \mathbb{N}$ such that $\frac{1}{2^N} < \epsilon$. 
 
 \vphantom{} 
 
 Let 
 $\mathcal{S}= (\sigma_F ^{+[k_1,l_1]}(\mathbf{x}_1),\sigma_F^{+[k_2,l_2]}(\mathbf{x}_2), \ldots ,\sigma_F^{+[k_n,l_n]}(\mathbf{x}_n))$ be an $(M+N)$-spaced specification in the dynamical system $(X_F^+,\sigma_F^+)$. Let 
 
  $$\mathcal{S}^*= (\pi_{[k_1,l_1+N]}(\mathbf{x}_1),\pi_{[k_2,l_2+N]}(\mathbf{x}_2), \ldots ,\pi_{[k_n,l_n+N]}(\mathbf{x}_n)).$$ Then $\mathcal{S}^*$ is an $M$-spaced CR-specification in $X_F^+$. So there exists $\mathbf{y} \in X_F^+$ such that $\mathcal{S}^*$ is $\epsilon$-traced by $\mathbf{y}$. So, for each $i \in \{1,2, \ldots,n \}$ and each $j \in \{k_i, k_{i+1}, \ldots, l_i \}$ and for each $m \in \{0,1,2, \ldots, N \}$, we have  $d(\mathbf{x}_i(j+m), \mathbf{y}(j+m)) < \epsilon$.
  
  	Therefore, for each $i\in \{ 1,2,\ldots,n\}$, 
  \begin{align*}
  	D(\sigma_F^{+ j} (\xx_i), \sigma_F^{+ j} (\yy)) 
  	& = \sup\left\{\frac{d(\xx_i (j+m),\yy(j+m) )}{2^m} \colon m\geq 1\right\}\\ 
  	& \leq \max \left\{ \sup\left\{\frac{d(\xx_i (j+m),\yy(j+m) )}{2^m} \colon 1 \leq m< N\right\} ,\e \right\}\\
  	&\leq \max \left\{ \sup\left\{\frac{\e}{2^m} \colon 1 \leq m< N\right\} ,\e \right\} = \e.
  \end{align*}
  Therefore, $\mathcal{S}$ is $\e$-traced by $\yy \in X_F^+.$
  
  Now assume $\sigma_F^+ : X_F^+ \to X_F^+$ has the specification property. Let $\e >0.$  
  Let $N \in \N$ such that every $N$-spaced specification in  $(X_F^+,\sigma_F^+)$ can be $\e/2$-traced by some $\yy \in X_F^+$.
  Let $$\mathcal{S}=(\pi_{[k_1,\ell_1]}(\xx_1),\pi_{[k_2,\ell_2]}(\xx_2),\ldots,\pi_{[k_n,\ell_n]}(\xx_n))$$ be an $N$-spaced CR-specification in $X_F^+$. Then 
  $$\mathcal{S^*}=(\sigma_F^{+ {[k_1-1,\ell_1-1]}}(\xx_1),\sigma_F^{+ {[k_2-1,\ell_2-1]}}(\xx_2),\ldots,\sigma_F^{+ {[k_n-1,\ell_n-1]}}(\xx_n))$$
  is an $N$-spaced specification in $(X_F^+,\sigma_F^+).$ So there is a $\yy \in X_F^+$ which $\e/2$-traces $\mathcal{S^*}$. But then for each   $i\in \{ 1,2,\ldots,n\}$ and each $j \in \{ k_i,k_{i+1},\ldots,l_i\},$
  \begin{align*}
  	d(\xx_i(j),\yy(j))) 
  	&\leq  2\sup\left\{\frac{d(\xx_i (j-1+m),\yy(j-1+m) )}{2^m} \colon m\geq 1\right\} \\ 
  	& = 2 D(\sigma_F^{+ (j-1)} (\xx_i), \sigma_F^{+ (j-1)} (\yy))<\e.
  \end{align*}
  So, $X_F^+$ has the CR-specification property.

 \end{proof}
 
 \begin{prop} \label{Proposition: Mixing implies growing images}
 	Let $(X,F)$ be a CR-dynamical system, and let $(X_F,\sigma_F)$ and $(X_F^+,\sigma^+_F)$ be the corresponding Mahavier systems. If $(X_F,\sigma_F)$ (or $(X_F^+,\sigma_F^+)$) is topologically mixing, then for any closed subset $A \subseteq X$ with non-empty interior, 
 $$\underset{n \to \infty}\lim d_H(F^n(A),X)=0,$$ where $d_H$ represents the Hausdorff metric on the hyperspace $2^X$.
 	
 \end{prop}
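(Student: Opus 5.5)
The plan is to unwind the definition of $F^n(A)$ in terms of chains and then feed suitable cylinder sets into the mixing hypothesis. Recall that $F^n(A)$ is the set of points $y\in X$ for which there is a chain $(x_1,\ldots,x_{n+1})\in\star_{i=1}^{n}F$ with $x_1\in A$ and $x_{n+1}=y$. By compactness of $\star_{i=1}^n F$, the set $F^n(A)$ is closed, so it is a legitimate point of $2^X$ once it is non-empty. Since $F^n(A)\subseteq X$ trivially, $d_H(F^n(A),X)<2\e$ will follow once we show that, for all large $n$, every point of $X$ lies within $2\e$ of $F^n(A)$.

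I will treat $(X_F^+,\sigma_F^+)$; the two-sided case is identical with coordinate $0$ in place of coordinate $1$. Fix $\e>0$ and cover $X$ by finitely many open $\e$-balls $B_1,\ldots,B_k$. Define
\[U=\{\xx\in X_F^+ : x_1\in \operatorname{int}A\},\qquad V_i=\{\xx\in X_F^+ : x_1\in B_i\}.\]
These are open in $X_F^+$.

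To apply mixing, $U$ and every $V_i$ must be non-empty. This requires that the first-coordinate projection $\pi_1(X_F^+)$ meets $\operatorname{int}A$ and every $B_i$, for instance that $\pi_1(X_F^+)=X$. I expect this to be the only real obstacle. If $\pi_1(X_F^+)$ were a proper closed subset of $X$, the conclusion would simply be false, since then $F^n(A)\subseteq \pi_1(X_F^+)$ for every $n$. So I would make explicit that, as in the setting of the paper, every point of $X$ lies on a full $F$-orbit. Under that assumption we may also shrink to the $B_i$ that meet $\pi_1(X_F^+)$, which are all of them.

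Mixing then gives integers $n_i$ such that $(\sigma_F^+)^n(U)\cap V_i\neq\emptyset$ for all $n\ge n_i$. Put $n_0=\max_i n_i$ and let $n\ge n_0$. For each $i$ we obtain $\xx\in X_F^+$ with $x_1\in A$ and $x_{n+1}\in B_i$. The chain $(x_1,\ldots,x_{n+1})$ witnesses $x_{n+1}\in F^n(A)$, so $F^n(A)\cap B_i\ne\emptyset$ for every $i$.

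Now take any $z\in X$. It lies in some $B_i$, and $B_i$ contains a point of $F^n(A)$, so $z$ is within $2\e$ of $F^n(A)$. Together with $F^n(A)\subseteq X$, this gives $d_H(F^n(A),X)\le 2\e$ for all $n\ge n_0$. Since $\e>0$ was arbitrary, the limit is $0$.
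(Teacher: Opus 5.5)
Your proof is correct and is essentially the paper's argument: pull back $\operatorname{int}A$ and a finite cover of $X$ by small open sets to cylinder sets via a single coordinate projection, use mixing to get a uniform $N$, and conclude that $F^n(A)$ meets every member of the cover for $n\ge N$. The one difference is that you state explicitly the hypothesis $\pi_1(X_F^+)=X$ (resp.\ $\pi_0(X_F)=X$) needed for those cylinder sets to be non-empty; the paper uses it tacitly, it holds for every $F_\Omega$ there, and it really is necessary (e.g.\ $F=\{(0,0)\}$ on $[0,1]$). However, your stated reason $F^n(A)\subseteq\pi_1(X_F^+)$ is false in general; the correct reason is that, by compactness, a point with no infinite forward chain has an open neighbourhood $W$ with $F^m(W)=\emptyset$ for some $m$, so $F^n(A)=\emptyset$ for large $n$ whenever $A\subseteq W$.
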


\begin{proof}
	Suppose that $(X_F,\sigma_F)$ is topologically mixing. Let $A\se X$ be a closed subset of $X$ with non-empty interior, and let $\e>0$. Since $X$ is compact, we may choose $\mathcal{U}=\{U_1,\ldots,U_k\}$ to be a finite open cover of $X$ such that for each $j\in\{1,\ldots,k\}$, $\diam(U_j)<\e$.
	
	Since $A$ has non-empty interior, there is a non-empty open set $V\se A$. Define the following open sets:
	
	\begin{align*}
		\widetilde{V}&=\pi_0^{-1}(V)=\left\{\xx\in I_F\colon x_0\in V\right\},\text{ and}\\
		\widetilde{U}_j&=\pi_0^{-1}\left(U_j\right)=\left\{\xx\in I_F\colon x_0\in U_j\right\}\text{ for all }j\in\{1,\ldots,k\}.
	\end{align*}
	By assumption $(X_F,\sigma_F)$ is topologically mixing, so for all $j\in\{1,\ldots,k\}$ there exists a positive integer $N_j$ such that for all $n\geq N_j$, $\sigma_F^n(\widetilde{V})\cap\widetilde{U}_j\neq\emptyset$. Choose $N=\max\{N_j\colon 1\leq j\leq k\}$. Then for all $n\geq N$, $\sigma_F^n(\widetilde{V})\cap \widetilde{U}_j\neq\emptyset$ for all $j\in\{1,\ldots,k\}$.
	
	Given $n\geq N$ and $j\in\{1,\ldots,k\}$ there exists $\xx(n,j)\in \widetilde{V}$ such that $\sigma_F^n(\xx(n,j))\in\widetilde{U}_j$. From the definitions of $\widetilde{V}$ and $\widetilde{U}_j$ and the definition of the shift map $\sigma_F$, it follows that $x_0(n,j)\in V\se A$ and $x_n(n,j)\in U_j$. Observe that since $\xx(n,j)\in I_F$, we must have that $x_n(n,j)\in F^n(x_0(n,j))\se F^n(A)$. 
	
	Now we show this implies that for $n\geq N$, $d_H(F^n(A),X)<\e$. Fix $n\geq N$, and let $y\in X$. Then $y\in U_j$ for some $j\in\{1,\ldots,k\}$. As we showed, there exists a point $x_n(n,j)\in F^n(A)\cap U_j$. Since $\diam(U_j)<\e$, we have that $d(y,x_n(n,j))<\e$. We have thus shown that every element of $X$ is within $\e$ of some element of $F^n(A)$. Since $F^n(A)\se X$, this suffices to show that
	\[
	d_H\left(F^n(A),X\right)<\e,
	\]
	and therefore
	\[
	\lim_{n\to\infty}d_H(F^n(A),X)=0.
	\]
	Using the similar arguments obtain the result that $(X_F^+,\sigma_F^+)$ is topologically mixing.
\end{proof}

\section{Lelek fan via Mahavier products}

 The Lelek fan (constructed in \cite{lelek}) is the unique smooth fan with a dense set of endpoints (see \cite{Char}, \cite{BO}). It has an astonishing way of appearing in different contexts: in continuum theory, it is a canonical example of a highly structured and yet intricate dendroid. Surprisingly, its endpoints form a one-dimensional set. It is not homogeneous, but it exhibits strong approximate homogeneity as its homeomorphism group acts densely on the endpoint set (see \cite{BK2}). It can be realized as the Fraiss\'e limit of finite fans (see \cite{BK}). Consequently, the Lelek fan serves as a fundamental test object linking continuum theory, descriptive topology, and topological dynamics. 
 
 We note that, using completely different techniques P. Oprocha \cite{Oprocha} has shown that the Lelek fan admits a completely scrambled, weakly mixing homeomorphism, and P. Oprocha and V. Nall have shown that it admits a mixing homeomorphism with zero entropy. 
 
 Here we show it admits still more dynamical properties. 

\begin{definition} \cite{Char}, \cite{BO}
	A Lelek fan is the unique smooth fan with a dense set of endpoints. 
\end{definition}

For our work here, we need the Lelek fan constructed in a particular way (for more details see \cite{banic1}).

\begin{definition} For each $(r,\rho) \in (0,\infty)$, we define the sets $L_r$, $L_{\rho}$, and $L_{r,\rho}$ as follows: $L_r= \{(x,y) \in I \times I:y=rx \}$, $L_{\rho}= \{(x,y) \in I \times I:y=\rho x \}$, and $L_{r,\rho}= L_r \cup L_{\rho}$.
	
\end{definition}

\begin{definition} Let $(r,\rho) \in (0,\infty) \times (0,\infty)$. We say that $r$ and $\rho$ \emph{never connect} or $(r,\rho) \in \mathcal{NC}$ if 
\begin{enumerate}
\item $0<r<1$, $\rho >1	$, and
\item for all integers $k$ and $l$, $$r^k=\rho^l \text{ if and only if } k=l=0.$$
\end{enumerate}
	
\end{definition}

In \cite{banic1}, the following theorem is the main result:

\begin{theorem} \cite{banic1} Let $(r,\rho) \in \mathcal{NC}$. Then $X^+_{L_{r,\rho}}$ is a Lelek fan with top $(0,0,0,\ldots)$.	
\end{theorem}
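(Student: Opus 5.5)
To show that $X^+_{L_{r,\rho}}$ is a Lelek fan with top $\mathbf{0}=(0,0,0,\ldots)$, I will verify the characterization quoted above directly: it is a smooth fan whose set of endpoints is dense.

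\textbf{Set-up.} Write $G=L_{r,\rho}$. A point $\mathbf{x}\in X_G^+$ is determined by $x_1\in I$ together with a choice sequence $\omega\in\{r,\rho\}^{\mathbb{N}}$ satisfying $x_{i+1}=\omega_i x_i$, subject to all coordinates staying in $I=[0,1]$. Conversely, each $\omega$ yields the set
\[
A_\omega=\{\mathbf{x}: x_{i+1}=\omega_i x_i \text{ for all } i\}.
\]
This set is the arc (or point) $\{t\,\mathbf{v}_\omega: t\in[0,c_\omega]\}$, where $\mathbf{v}_\omega=(1,\omega_1,\omega_1\omega_2,\ldots)$ and
\[
c_\omega=\inf_n\Big(\prod_{i<n}\omega_i\Big)^{-1}\in[0,1].
\]
Hence $X_G^+=\bigcup_\omega A_\omega$ is a union of straight segments through $\mathbf{0}$. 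It is compact because it is a Mahavier product of a closed relation. Because two segments with distinct $\omega$ meet only at $\mathbf{0}$ once $x_1>0$ (since $r\neq\rho$), the space is a fan with top $\mathbf{0}$.

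\textbf{Key steps.}
\begin{enumerate}
\item \emph{Arcwise connectedness and hereditary unicoherence.} Every point is joined to $\mathbf{0}$ along its ray, and distinct rays share only $\mathbf{0}$. Subcontinua are therefore unions of initial subsegments, which makes the space a dendroid whose only ramification point is $\mathbf{0}$.
\item \emph{Smoothness.} I must show that if $\mathbf{x}^k\to\mathbf{x}$, then the arcs $[\mathbf{0},\mathbf{x}^k]$ converge to $[\mathbf{0},\mathbf{x}]$. Each arc is $\{t\mathbf{x}^k:t\in[0,1]\}$, and $t\mathbf{x}^k\to t\mathbf{x}$ uniformly in $t$. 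Scalar multiplication by $t\le 1$ preserves membership in $X_G^+$, so these segments really lie in the space and the limit arc is the correct one.
\item \emph{Endpoints.} The endpoints are the points $c_\omega\mathbf{v}_\omega$ with $c_\omega>0$.
\item \emph{Density of endpoints.} This is the main obstacle. Given $\mathbf{x}=t\,c_\omega\mathbf{v}_\omega$ with $t<1$ and a precision level $n$, I need an endpoint agreeing approximately with $\mathbf{x}$ in the first $n$ coordinates. The plan is to keep $\omega_1,\ldots,\omega_n$ and then choose a tail $\omega'$ so that the partial products $P_m=\prod_{i<m}\omega'_i$ satisfy two conditions:
\begin{enumerate}
\item their supremum over $m>n$ is approximately $(tc_\omega)^{-1}$, making the new ray's endpoint lie near $\mathbf{x}$ in the first $n$ coordinates;
\item they never exceed $\max_{m\le n}P_m$ by more than is allowed.
\end{enumerate}
Since $\log r<0<\log\rho$ and $\log r/\log\rho$ is irrational (the $\mathcal{NC}$ condition), the set $\{k\log r+l\log\rho: k,l\ge 0\}$ is dense in $\mathbb{R}$. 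A suitable word in $r,\rho$ can therefore raise the running maximum of $\log P_m$ to any target value within $\varepsilon$. This is done by first descending with many factors of $r$ and then climbing with factors of $\rho$ to the desired level, and afterwards using only $r$ forever so that the supremum is attained. Points with $t=1$ are already endpoints, and $\mathbf{0}$ is handled by rays with small $c_\omega$. Care is needed so that the descending portion does not itself create a new maximum and so that the approximation is to within $\varepsilon$ after the scaling.
\item \emph{Nondegeneracy.} The ray for $\omega=(r,r,\ldots)$ has $c_\omega=1$, so the fan is nondegenerate.
\end{enumerate}
Smooth plus dense endpoints then gives the Lelek fan by uniqueness.
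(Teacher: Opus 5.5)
Your argument is correct, and it necessarily takes a different route from the paper, because the paper gives no proof of this statement: it is quoted from \cite{banic1}.

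\textbf{Comparison with the paper.} The paper's own Lelek-fan results (Lemma~\ref{Lemma: I_F^+ is a smooth fan}, Theorem~\ref{Proposition: LF-inducing yields Lelek fan (one-sided)}) follow your template of smooth fan plus dense endpoints. However, they obtain density of endpoints by reducing to this very theorem, through the rescaled copies $[0,\alpha]^+_{G_\alpha}$ and the fans $L^+_\gamma$. You supply that step directly, and it works. After $\omega_1,\dots,\omega_{n-1}$, append $r^k\rho^l$ and then $r$ forever, with $P_nr^k\rho^l\in(e^{-\eta}/x_1,1/x_1]$. Then $\sup_mP_m=P_nr^k\rho^l$: for small $\eta$ this target exceeds $\max_{m\le n}P_m\le 1/c_\omega<1/x_1$, which also forces $l\ge 1$. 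Hence $c_{\omega'}\in[x_1,e^{\eta}x_1)$ and $|e_i-x_i|\le(e^{\eta}-1)x_i$ for $i\le n$.

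Your smoothness step is also cleaner than the paper's. The identity $D(t\mathbf{x}^k,t\mathbf{x})=t\,D(\mathbf{x}^k,\mathbf{x})$ gives Hausdorff convergence of the arcs $\{t\mathbf{x}^k\}$ at once, with no bookkeeping of index sequences.

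What your route buys is self-containment and generality. The appended tail uses only $\omega_1,\omega_2$, so the same construction proves Theorem~\ref{Proposition: LF-inducing yields Lelek fan (one-sided)} for every LF-inducing $\Omega$, without Lemmas~\ref{Lemma: G_alpha yields a Lelek fan} and~\ref{Lemma: L_gamma^+ is a Lelek fan}. The paper's route is shorter but rests on \cite{banic1}.

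\textbf{One step to tighten.} Your claim that ``subcontinua are unions of initial subsegments'' is inaccurate, since a subcontinuum missing $\mathbf{0}$ is a subarc of a single ray, and it is unproved. A clean fix is to consider the map $(\omega,t)\mapsto t\mathbf{v}_\omega$ on $\{(\omega,t):0\le t\le c_\omega\}$ with $\{t=0\}$ collapsed. Its domain is closed because $\omega\mapsto c_\omega$ is upper semicontinuous. The map is a continuous bijection onto $X^+_{L_{r,\rho}}$. So the space is a subcontinuum of the Cantor fan that is a union of initial segments of rays, and it inherits the dendroid and fan properties.
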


\begin{definition} Let $(r,\rho) \in (0,\infty) \times (0,\infty)$. We use $F_{r,\rho}$ to denote the following closed relation on $X$. 
$$F_{r,\rho}=L_{r,\rho} \cup \{(t,t):0\le t \le 1 \}.$$
	
\end{definition}

The Mahavier products generated by $F_{r,\rho}$ are also Lelek fans - this was proven in \cite{BE2}.

\begin{theorem} \cite{BE2} Let $(r,\rho) \in \mathcal{NC}$. Then $X_{F_{r,\rho}}$ and $X_{F_{r,\rho}}^+$ are both Lelek fans.
	
\end{theorem}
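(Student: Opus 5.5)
The plan is to verify directly that $X^+_{F_{r,\rho}}$ (and then $X_{F_{r,\rho}}$) is a smooth fan with a dense set of endpoints. The characterization quoted above from \cite{Char}, \cite{BO} then identifies it as the Lelek fan. The basic structural fact is that $F=F_{r,\rho}$ is a finite union of segments through the origin. Hence $(x,y)\in F$ implies $(tx,ty)\in F$ for all $t\in[0,1]$, so $t\mathbf{x}\in X^+_F$ whenever $\mathbf{x}\in X^+_F$ and $t\in[0,1]$. Consequently $X^+_F$ is the union of the segments $[\mathbf{0},\mathbf{x}]=\{t\mathbf{x}:t\in[0,1]\}$, and it is a continuum, being a closed subset of $I^\infty$ contractible to $\mathbf{0}$. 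For $\mathbf{x}\neq\mathbf{0}$ set $m(\mathbf{x})=\sup\{t\ge 1: t\mathbf{x}\in X^+_F\}$. This is finite because some coordinate of $\mathbf{x}$ is positive and every coordinate of $t\mathbf{x}$ must stay in $I$. The point $\mathbf{e}(\mathbf{x})=m(\mathbf{x})\mathbf{x}$ is the far end of the ray through $\mathbf{x}$.

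First step: show these rays form a fan with top $\mathbf{0}$. Two rays meeting off the origin are positive multiples of each other and hence coincide. So every point other than $\mathbf{0}$ lies on exactly one maximal segment $[\mathbf{0},\mathbf{e}(\mathbf{x})]$, which gives arcwise connectedness and a unique ramification point. For uniqueness of arcs (being a dendroid) I would embed $X^+_F$ into a cone over a zero-dimensional set. A nonzero $\mathbf{x}$ is coded by its first nonzero index $k$ together with the ratio sequence $x_{i+1}/x_i\in\{r,\rho,1\}$ for $i\ge k$; these ratios are defined because all coordinates after the first nonzero one are nonzero. Let $\Sigma$ be the closure of the set of admissible codes, a closed subset of a product of finite sets (with an added symbol for ``still zero''), hence zero-dimensional. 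The map $\mathbf{x}\mapsto(\text{code},\ \sup_i x_i)$ should then give an embedding into the cone $\Sigma\times[0,1]/\Sigma\times\{0\}$, since $\mathbf{x}$ is recovered from its code and its largest coordinate. Subcontinua of a cone over a zero-dimensional compactum are hereditarily unicoherent and arcwise connected, so $X^+_F$ is a fan. Smoothness is immediate: if $\mathbf{x}_n\to\mathbf{x}$ then $t\mathbf{x}_n\to t\mathbf{x}$ uniformly in $t$, so $[\mathbf{0},\mathbf{x}_n]\to[\mathbf{0},\mathbf{x}]$.

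Second step, which I expect to be the main obstacle: density of endpoints. The endpoints are exactly the points $\mathbf{e}(\mathbf{x})$, that is, the points $\mathbf{x}\neq\mathbf{0}$ with $\sup_i x_i=1$. Fix $\mathbf{x}$ and $\varepsilon>0$, and choose $n$ with $2^{-n}<\varepsilon$. Keep the coordinates $x_1,\dots,x_n$, or first replace them by a nearby nonzero initial block if $\mathbf{x}$ vanishes there. From index $n$ onward, apply the slope $\rho>1$ repeatedly until a coordinate is forced past $1$. At that stage I would use the ratios $r$ and $1$, together with a small perturbation of the early block permitted by the never-connect condition, to arrange a coordinate equal to exactly $1$. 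Concretely, I need some product $r^k\rho^l$ to land exactly on the required value. The condition $r^k=\rho^l\Rightarrow k=l=0$ makes $\{k\log r+l\log\rho\}$ dense in $\mathbb{R}$, so the early block can be rescaled by a factor arbitrarily close to $1$ to achieve this. The resulting point is an endpoint within $\varepsilon$ of $\mathbf{x}$ in the metric $D$. This is the argument of \cite{banic1} for $L_{r,\rho}$. The diagonal in $F_{r,\rho}$ only adds the ratio $1$, so that argument transfers: alternatively, one checks that $X^+_{L_{r,\rho}}$ sits inside $X^+_{F_{r,\rho}}$ and that the endpoints of the former are dense in the latter, since inserted ratio-$1$ steps can be pushed beyond index $n$.

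Finally, for the two-sided product $X_F$ the same homogeneity argument applies verbatim with the metric indexed by $\mathbb{Z}$. Codes are bi-infinite ratio sequences, possibly preceded by a tail of zeros, and the density argument modifies coordinates only at indices of large absolute value. Combining the two steps with the characterization of the Lelek fan completes the proof.
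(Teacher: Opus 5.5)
\textbf{First gap: the cone embedding.} The step that fails as written is the embedding $\mathbf{x}\mapsto(\text{code},\sup_i x_i)$ into the cone over $\Sigma$. The code coordinate is fine. However, $\mathbf{x}\mapsto\sup_i x_i$ is only lower semicontinuous on $X^+_{F_{r,\rho}}$, so the map is not continuous, and you have not yet shown the space is a dendroid. Star-shapedness together with ``maximal segments meet only at $\mathbf{0}$'' is not enough; a disc has both properties.

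For a concrete failure, take $r=\frac12$, $\rho=3$ and $\mathbf{x}=(\frac1{10},\frac1{20},\frac1{40},\ldots)$. Let $\mathbf{y}^n$ agree with $\mathbf{x}$ in the first $n$ coordinates, then multiply by $3$ while the result stays in $I$, then repeat that value. Then $\mathbf{y}^n\to\mathbf{x}$ and the codes converge, but $\sup_i y^n_i>\frac13$ while $\sup_i x_i=\frac1{10}$. This is unavoidable: if $\sup$ were continuous, the endpoint set $\{\sup_i x_i=1\}$ would be closed, contradicting the density you prove in your second step.

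The repair is to use $x_1$ as the height ($x_0$ in the two-sided case). Since $(0,y)\in F_{r,\rho}$ forces $y=0$ and $(x,0)\in F_{r,\rho}$ forces $x=0$, every nonzero point has all coordinates positive. So your ``first nonzero index'' and ``tail of zeros'' never occur. Each ratio $x_{i+1}/x_i$ is then continuous, hence locally constant, off $\mathbf{0}$. Moreover $x_1\to 0$ as $\mathbf{x}\to\mathbf{0}$, and $\mathbf{x}=x_1(1,c_1,c_1c_2,\ldots)$. This gives a continuous injection of a compactum into the cone over $\{r,1,\rho\}^{\N}$, hence an embedding. The rest of your first step then goes through, including smoothness via $D(t\mathbf{x}_n,t\mathbf{x})=tD(\mathbf{x}_n,\mathbf{x})$. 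Done this way, your argument supplies the hereditary unicoherence that the paper's arc-decomposition proof of its generalization (the lemma that $I^+_{F_\Omega}$ is a smooth fan) leaves implicit.

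\textbf{Second gap: the ``alternative'' density claim.} The alternative at the end of your density step is false: endpoints of $X^+_{L_{r,\rho}}$ are not dense in $X^+_{F_{r,\rho}}$. For $r=\frac12$, $\rho=3$, any point of $X^+_{L_{1/2,3}}$ with $y_1>\frac13$ has $y_2=y_1/2$. So no point there has its first two coordinates both within $\frac1{12}$ of $\frac12$, and $(\frac12,\frac12,\ldots)$ has a neighborhood missing $X^+_{L_{1/2,3}}$. Ratio-$1$ steps among the first $n$ coordinates cannot be ``pushed beyond index $n$'' without moving those coordinates. The correct form of that idea is the paper's route: keep the initial block of $\mathbf{x}$, ratio-$1$ steps included, and attach after it a rescaled two-line fan. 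This is the set $L^+_\gamma$, whose endpoints are dense by \cite{banic1} and are endpoints of the whole product.

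Your primary argument is sound and avoids \cite{banic1}. If $\max_{i\le n}x_i=1$, then $\mathbf{x}$ is already an endpoint. Otherwise, use density of $\{r^k\rho^l:k,l\ge0\}$ in $(0,\infty)$, which is what $\mathcal{NC}$ gives since $\log r<0<\log\rho$. Choose $s$ near $1$ with $s x_n r^k\rho^l=1$, and replace $x_1,\ldots,x_n$ by $sx_1,\ldots,sx_n$. Apply the $k$ factors $r$ first and then the $l$ factors $\rho$, so every intermediate coordinate stays in $(0,1]$, then repeat $1$. In the two-sided case, do the same on $[-n,n]$ and rescale, or make constant, the backward tail. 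With these two corrections your proof is complete and more self-contained than the paper's reduction to the two-line case.
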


We next extend these results to even more straight line segments. In order to do this we need to generalize and uniformize our notation. We do this and then introduce a family of closed relations and show that all of them have Lelek fans as their Mahavier products. 

\begin{definition}\label{Definition: LF-inducing}
	If $M$ is an integer with $M \ge 2$, and $\Omega= \{\omega_1, \omega_2, \ldots, \omega_M \}$ is a set of positive numbers, then we say that $\Omega$ is \emph{LF-inducing} if 
	\begin{enumerate}
		\item $\omega_i > 0$ for all $i \in \{1,2, \ldots, M\}$, and 
		\item $\omega_1<1<\omega_2$, and the pair $(\omega_1,\omega_2) \in \mathcal{NC}$.	
	\end{enumerate}
	
	Given an LF-inducing set $\Omega= \{\omega_1, \omega_2, \ldots, \omega_M \}$ we define the relation $F_{\Omega} \subseteq I^2$ by
	$$F_{\Omega}= \bigcup^M_{j=1}\{(x,y)\in I^2:y=\omega_jx \}.$$
	Figures 3, 4, and 5 depict the relations $F_{\{1/2,3,1/3,2,1\}}$, $F_{\{1/2,3,1/3,2\}}$, and $F_{\{1/2,3\}}$.
	
\end{definition}

\begin{center}
	\begin{figure}
		\begin{minipage}{.49\textwidth}
			\centering
			\begin{tikzpicture}[scale=5]
				\draw[thick,dotted] (0,0)node[left]{\footnotesize0} -- (0,1)node[left]{\footnotesize1} -- (1,1) -- (1,0)node[below]{\footnotesize1} -- 	(0,0)node[below]{\footnotesize0};
				\draw[thick, dotted] (1/3,1) -- (1/3,0) node[below]{\footnotesize$\frac{1}{3}$}
				(1/2,1) -- (1/2,0) node[below]{\footnotesize$\frac{1}{2}$}
				(1,1/2) -- (0,1/2) node[left]{\footnotesize$\frac{1}{2}$}
				(1,1/3) -- (0,1/3) node[left]{\footnotesize$\frac{1}{3}$}
				;
				\draw[very thick] 
				(0,0) -- (1,1)
				(0,0) -- (1/3,1)
				(0,0) -- (1/2,1)
				(0,0) -- (1,1/2)
				(0,0) -- (1,1/3)
				;
			\end{tikzpicture}
			\caption{$F_{\{1/2,3,1,1/3,2\}}$}\label{Figure: 5 Lines}
		\end{minipage}		
		\begin{minipage}{.49\textwidth}
			\centering
			\begin{tikzpicture}[scale=5]
				\draw[thick,dotted] (0,0)node[left]{\footnotesize0} -- (0,1)node[left]{\footnotesize1} -- (1,1) -- (1,0)node[below]{\footnotesize1} -- 	(0,0)node[below]{\footnotesize0};
				\draw[thick, dotted] (1/3,1) -- (1/3,0) node[below]{\footnotesize$\frac{1}{3}$}
				(1/2,1) -- (1/2,0) node[below]{\footnotesize$\frac{1}{2}$}
				(1,1/2) -- (0,1/2) node[left]{\footnotesize$\frac{1}{2}$}
				(1,1/3) -- (0,1/3) node[left]{\footnotesize$\frac{1}{3}$}
				;
				\draw[very thick] 
				(0,0) -- (1/3,1)
				(0,0) -- (1/2,1)
				(0,0) -- (1,1/2)
				(0,0) -- (1,1/3)
				;
			\end{tikzpicture}
			\caption{$F_{\{1/2,3,1/3,2\}}$}\label{Figure: 4 Lines}
		\end{minipage}
		%
		\centering
		\begin{tikzpicture}[scale=5]
			\draw[thick,dotted] (0,0)node[left]{\footnotesize0} -- (0,1)node[left]{\footnotesize1} -- (1,1) -- (1,0)node[below]{\footnotesize1} -- (0,0)node[below]{\footnotesize0};
			\draw[thick, dotted] (1/3,1) -- (1/3,0) node[below]{\footnotesize$\frac{1}{3}$}
			(1,1/2) -- (0,1/2) node[left]{\footnotesize$\frac{1}{2}$}
			;
			\draw[very thick] 
			(0,0) -- (1/3,1)
			(0,0) -- (1,1/2)
			;
		\end{tikzpicture}
		\caption{$F_{\{1/2,3\}}$}\label{Figure: 2 Lines}
	\end{figure}
\end{center}

\begin{lemma}\label{Lemma: G_alpha yields a Lelek fan}
	Let $(\omega_1,\omega_2)\in\mathcal{NC}$, and let $\alpha\in(0,1]$. If $G_\alpha=F_{\{\omega_1,\omega_2\}}\cap\left([0,\alpha]\times[0,\alpha]\right)$, then $[0,\alpha]^+_{G_\alpha}$ and $[0,\alpha]^+_{G_\alpha^{-1}}$ are Lelek fans.
\end{lemma}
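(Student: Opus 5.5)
The plan is to reduce to the theorem of \cite{banic1} by rescaling. Since the relation $L_{r,\rho}$ is invariant under the dilation $h_\alpha(t)=\alpha t$, the natural candidate for a homeomorphism is the coordinatewise map
$$H_\alpha(x_1,x_2,\ldots)=(\alpha x_1,\alpha x_2,\ldots),$$
defined on $X^+_{L_{\omega_1,\omega_2}}=[0,1]^+_{F_{\{\omega_1,\omega_2\}}}$. The goal is to show that $H_\alpha$ is a homeomorphism onto $[0,\alpha]^+_{G_\alpha}$. Because being a Lelek fan is a topological property, the first claim then follows from the theorem of \cite{banic1}, since $(\omega_1,\omega_2)\in\mathcal{NC}$.

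The key step is the set equality $H_\alpha\bigl(X^+_{L_{\omega_1,\omega_2}}\bigr)=[0,\alpha]^+_{G_\alpha}$, which rests on the pointwise fact that for $x,y\in[0,1]$,
$$(x,y)\in L_{\omega_1,\omega_2}\iff(\alpha x,\alpha y)\in G_\alpha .$$
This holds because $y=\omega_j x$ if and only if $\alpha y=\omega_j(\alpha x)$. Moreover $\alpha x,\alpha y\in[0,\alpha]$ exactly when $x,y\in[0,1]$, and the constraint that points lie in $I\times I$ is automatically taken care of by the box $[0,\alpha]^2$. Applying this to each pair $(x_i,x_{i+1})$ gives both inclusions. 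The inverse map is coordinatewise division by $\alpha$. Both $H_\alpha$ and its inverse are continuous in the product topology because each coordinate map is continuous.

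For the inverse relation, I would avoid invoking a separate theorem and instead exploit the symmetry $(x,y)\mapsto(y,x)$. We have $G_\alpha^{-1}=\{(x,y)\in[0,\alpha]^2: y=\omega_1^{-1}x \text{ or } y=\omega_2^{-1}x\}$. Here $\omega_2^{-1}<1<\omega_1^{-1}$. To check that $(\omega_2^{-1},\omega_1^{-1})\in\mathcal{NC}$, suppose $\omega_2^{-k}=\omega_1^{-l}$; then $\omega_1^{l}=\omega_2^{k}$, which by the $\mathcal{NC}$ condition for $(\omega_1,\omega_2)$ forces $l=k=0$. So $G_\alpha^{-1}$ is exactly the relation $G'_\alpha$ built in the same way from the never-connecting pair $(\omega_2^{-1},\omega_1^{-1})$, and the first part of the argument applies verbatim to give that $[0,\alpha]^+_{G_\alpha^{-1}}$ is a Lelek fan.

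I expect no serious obstacle here. The only point needing care is the bookkeeping in the definition of $L_r$. Lines with slope greater than $1$ are truncated at $x=1/\rho$, and after scaling this becomes $x=\alpha/\rho$, which is exactly the truncation imposed by $[0,\alpha]^2$. I would verify this explicitly when proving the pointwise equivalence. The top of each fan is $(0,0,\ldots)$, which is preserved by $H_\alpha$.
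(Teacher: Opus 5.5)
Your proposal is correct and is essentially the paper's proof. Your coordinatewise dilation $H_\alpha$ is exactly the map induced by the paper's conjugacy $\varphi_\alpha(x)=\alpha x$ between $([0,1],F_{\{\omega_1,\omega_2\}})$ and $([0,\alpha],G_\alpha)$, and it rests on the same pointwise equivalence. The only difference is in the case of $G_\alpha^{-1}$: the paper cites \cite{banic1} directly for $I^+_{F^{-1}_{\{\omega_1,\omega_2\}}}$ being a Lelek fan, whereas you note that $G_\alpha^{-1}$ is the same construction for the never-connecting pair $(\omega_2^{-1},\omega_1^{-1})$, so your argument needs only the main theorem of \cite{banic1} as it is stated in the paper.
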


\begin{proof}
	From \cite{banic1} we have that both $I_{F_{\{\omega_1,\omega_2\}}}^+$ and $I_{F^{-1}_{\{\omega_1,\omega_2\}}}^+$ are Lelek fans. Let $\alpha\in(0,1]$. Define $\varphi_\alpha\colon[0,1]\to[0,\alpha]$ by $\varphi_\alpha(x)=\alpha x$. This is clearly a homeomorphism. Moreover, given $(x,y)\in I\times I$, we have $y=\omega_1x$ if and only if $\alpha y=\omega_1(\alpha x)$, and likewise $y=\omega_2x$ if and only if $y=\omega_2(\alpha x)$. Thus $(x,y)\in F_{\{\omega_1,\omega_2\}}$ if and only if $(\varphi_\alpha(x),\varphi_\alpha(y))\in G_\alpha$, so $\varphi_\alpha$ is a topological conjugacy from $([0,1],F_{\{\omega_1,\omega_2\}})$ to $([0,\alpha],G_\alpha)$.  Therefore $[0,\alpha]_{G_\alpha}^+$ is a Lelek fan.
	
	Similarly, $([0,\alpha],G_\alpha^{-1})$ is topologically conjugate to $([0,\alpha],F_{\{\omega_1,\omega_2\}}^{-1})$, so $[0,\alpha]_{G_\alpha^{-1}}^+$ is a Lelek fan.
\end{proof}

\begin{lemma}\label{Lemma: I_F^+ is a smooth fan}
	If $\Omega=\left\{\omega_1,\ldots,\omega_M\right\}$ is an LF-inducing set, then $I_{F_\Omega}^+$ is a smooth fan.
\end{lemma}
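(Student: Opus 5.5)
The approach is to realize $I_{F_\Omega}^+$ explicitly as a union of arcs emanating from the point $\mathbf{0}=(0,0,0,\ldots)$ that are linearly ordered by scaling. The key observation is that every line in $F_\Omega$ passes through the origin, so $F_\Omega$ is invariant under multiplication by scalars $t\in[0,1]$. Consequently, if $\xx=(x_1,x_2,\ldots)\in I_{F_\Omega}^+$, then $t\xx=(tx_1,tx_2,\ldots)\in I_{F_\Omega}^+$ for every $t\in[0,1]$. For each $\xx\neq\mathbf{0}$ the map $t\mapsto t\xx$ is therefore an arc $A_{\xx}$ from $\mathbf{0}$ to $\xx$ inside $I_{F_\Omega}^+$, and $I_{F_\Omega}^+$ is the union of these arcs. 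In particular it is arcwise connected, and it is compact since it is a closed subset of $I^\infty$. So it is a continuum.

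Next I will identify the endpoints and show the fan structure. A point $\xx$ is maximal, i.e.\ an endpoint of a ray, if $s\xx\notin I_{F_\Omega}^+$ for all $s>1$; this happens exactly when $\sup_i x_i=1$. For each nonzero $\xx$, put $s(\xx)=1/\sup_i x_i$; then $s(\xx)\xx$ is the unique maximal point on the ray through $\xx$. I will then argue that any arc in $I_{F_\Omega}^+$ containing $\mathbf{0}$ is contained in the union of segments of rays. For this, note that a nonzero point $\xx$ has all coordinates positive, since $x_{i+1}=\omega_j x_i$ with $\omega_j>0$. Moreover, the ratio vector $(x_2/x_1,x_3/x_2,\ldots)\in\Omega^{\mathbb N}$ takes values in a finite set. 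On $I_{F_\Omega}^+\setminus\{\mathbf{0}\}$ this ratio map is continuous into the Cantor space $\Omega^{\mathbb N}$, so it is locally constant on connected subsets. Hence any connected subset of $I_{F_\Omega}^+\setminus\{\mathbf{0}\}$ lies in a single ray $\{t\xx\}$. From this, every subcontinuum is either an arc in one ray or a union of initial segments of rays through $\mathbf{0}$. So the space is hereditarily unicoherent and arcwise connected, i.e.\ a dendroid, and $\mathbf{0}$ is the unique ramification point. This makes it a fan with top $\mathbf{0}$, where the unique arc from $\mathbf{0}$ to $\xx$ is $A_{\xx}$. The condition $\omega_1<1<\omega_2$ ensures that there are at least two, indeed uncountably many, distinct rays, so the fan is nondegenerate as a fan.

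Finally, I will verify smoothness. Smoothness means that whenever $\xx_n\to\xx$, the arcs $A_{\xx_n}$ converge to $A_{\xx}$ in the Hausdorff metric. This follows from the estimate $D(t\xx_n,t\xx)\le D(\xx_n,\xx)$ for all $t\in[0,1]$, which gives $d_H(A_{\xx_n},A_{\xx})\le D(\xx_n,\xx)\to0$. The main obstacle is the dendroid/fan step: proving rigorously that there are no other arcs, i.e.\ unique arcwise connectedness. The local constancy of the ratio sequence is the tool I would use first. The subtle point is the behavior near $\mathbf{0}$, where coordinates vanish and the ratio map is undefined, and there I would rely on the removal-of-$\mathbf{0}$ argument above.
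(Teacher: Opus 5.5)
Your proof is correct, and its skeleton matches the paper's: your rays $\{t\xx\}$ are exactly the paper's arcs $C_{\mathbf a}$, and your ratio vector $(x_2/x_1,x_3/x_2,\ldots)$ plays the role of the index sequence $\mathbf a$. The two proofs differ in how they justify the two substantive steps.

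\textbf{Fan structure.} The paper only observes that $I_{F_\Omega}^+=\bigcup_{\mathbf a}C_{\mathbf a}$, with the $C_{\mathbf a}$ pairwise meeting only at $\mathbf v$, and concludes that $I_{F_\Omega}^+$ is a fan. That observation alone does not give a dendroid: the cone over a circle is also such a union of arcs. Your remark that the ratio map sends $I_{F_\Omega}^+\setminus\{\mathbf 0\}$ continuously into the totally disconnected space $\Omega^{\N}$, and is therefore constant on connected sets, is exactly the missing ingredient. (``Locally constant'' should read ``constant''.)

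You still need to pass from ``connected subsets of $I_{F_\Omega}^+\setminus\{\mathbf 0\}$ lie in one ray'' to ``a subcontinuum $K$ containing $\mathbf 0$ and $\xx$ contains $A_{\xx}$''. The boundary bumping theorem, applied to the open subset $K\setminus\{\mathbf 0\}$ of $K$, does this. The component of $K\setminus\{\mathbf 0\}$ containing $\xx$ lies in the ray of $\xx$ and has $\mathbf 0$ in its closure, so it contains $\{t\xx:0<t\le 1\}$. Hereditary unicoherence then follows from your description of subcontinua.

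\textbf{Smoothness.} The paper argues that $\zz(n)\to\yy\neq\mathbf v$ forces the index sequences $\mathbf b(n)$ to converge coordinatewise to $\mathbf a$, and then simply asserts that the arcs converge. Your identity $D(t\xx_n,t\xx)=t\,D(\xx_n,\xx)$ gives the explicit bound $d_H(A_{\xx_n},A_{\xx})\le D(\xx_n,\xx)$. This bound also covers $\xx=\mathbf 0$ and needs no bookkeeping of indices.

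\textbf{What each buys.} The paper's index-sequence viewpoint is the one it reuses later, in the sets $A_\gamma$ and the fans $L_\gamma^+$ of the endpoint-density argument. Your scaling viewpoint gives a shorter and more complete proof of this particular lemma.
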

\begin{proof}
	For each sequence of indices, $\mathbf{a}=(a(1),a(2),\ldots)\in\{1,\ldots,M\}^\N$, define the sets
	\begin{align*}
		B_{\mathbf{a}}&=\left\{\xx\in I^\N\colon x_{j+1}=\omega_{a(j)}x_j\text{ for all }j\in\N\right\},\\
		C_{\mathbf{a}}&=B_{\mathbf{a}}\cap I_{F_\Omega}^+.
	\end{align*}
	The set $B_{\mathbf{a}}$ is an arc, and we have that $C_{\mathbf{a}}$ is either an arc or the singleton $\{(0,0,\ldots)\}$. Additionally, given any two sequences $\mathbf{a},\mathbf{b}\in\{1,\ldots,M\}^\N$ with $\mathbf{a}\neq\mathbf{b}$, we have that $C_{\mathbf{a}}\cap C_{\mathbf{b}}=\{(0,0,\ldots)\}$.
	
	Observe that
	\[
	I_{F_\Omega}^+=\bigcup_{\mathbf{a}\in \{1,\ldots,M\}^\N} C_{\mathbf{a}},
	\]
	so $I_{F_\Omega}^+$ is a fan with top $\mathbf{v}=(0,0,0,\ldots)$.
	
	To see that $I_{F_\Omega}^+$ is smooth, let $\mathbf{y}\in I_{F_\Omega}^+\setminus\{\mathbf{v}\}$ and suppose $(\mathbf{z}(n))_{n=1}^\infty$ is a sequence in $I_{F_\Omega}^+$ converging to $\mathbf{y}$. Without loss of generality, we suppose that for all $n\in\N$, $\mathbf{z}(n)\neq\mathbf{v}$. Then for each $n\in\N$, there is a unique sequence $\mathbf{b}(n)=(b(n,1),b(n,2),\ldots)\in\{1,\ldots,M\}^\N$ such that $\mathbf{z}(n)\in C_{\mathbf{b}(n)}$, and there exists a unique $\mathbf{a}\in\{1,\ldots,M\}^\N$ such that $\mathbf{y}\in C_{\mathbf{a}}$.
	
	Define the functions $\psi\colon[0,y_1]\to C_{\mathbf{b}}$ and $\psi_n\colon[0,z(n,1)]\to C_{\mathbf{b}(n)}$ for each $n\in\N$ by
	\begin{align*}
		\psi(t)&=\left(t,\omega_{a(1)}t,\omega_{a(2)}\omega_{a(1)}t,\omega_{a(3)}\omega_{a(2)}\omega_{a(1)}t,\ldots\right)\\
		\psi_n(t)&=\left(t,\omega_{b(n,1)}t,\omega_{b(n,2)}\omega_{b(n,1)}t,\omega_{b(n,3)}\omega_{b(n,2)}\omega_{b(n,1)}t,\ldots\right).
	\end{align*}
	Then the image $\psi([0,y_1])$ is the unique arc in $I_{F_\Omega}^+$ from $\mathbf{v}$ to $\mathbf{y}$, and the image $\psi_n([0,z(n,1)])$ is the unique arc in $I_{F_\Omega}^+$ from $\mathbf{v}$ to $\mathbf{z}(n)$. By assumption $(\mathbf{z}(n))_{n=1}^\infty$ converges to $\mathbf{y}$, so we may look inductively at each coordinate to conclude that for each $i\in\N$, $(b(n,i))_{n=1}^\infty$ converges to $a(i)$. We thus conclude that the arcs $\psi_n([0,z(n,1)])$ converge to the arc $\psi([0,y_1])$. Therefore $I_{F_\Omega}^+$ is a smooth fan.
\end{proof}

We establish some notation that will be used in Lemma~\ref{Lemma: L_gamma^+ is a Lelek fan} and Theorem~\ref{Proposition: LF-inducing yields Lelek fan (one-sided)}. Given a finite sequence of indices, $\gamma=(a(1),\ldots,a(N-1))\in \{1,\ldots,M\}^{N-1}$, we define
\[
A_\gamma=\left\{\left(x_1,\ldots, x_{N}\right)\in I^{N}\colon x_{j+1}=\omega_{a(j)}x_j\text{ for all }j\in\{1,\ldots,N-1\}\right\}.
\]
Observe that $A_{\gamma}$ is an arc. Let $\alpha(\gamma)$ be the real number such that $\pi_{n}(A_\gamma)=[0,\alpha(\gamma)]$. Observe that $\alpha(\gamma)\in (0,1]$. As in Lemma~\ref{Lemma: G_alpha yields a Lelek fan}, let
\[
G_{\alpha(\gamma)}=F_{\{\omega_1,\omega_2\}}\cap\left([0,\alpha(\gamma)]\times[0,\alpha(\gamma)]\right),
\]
and define
\[
L_\gamma^+=A_\gamma\star [0,\alpha(\gamma)]_{G_{\alpha(\gamma)}}^+.
\]

\begin{lemma}\label{Lemma: L_gamma^+ is a Lelek fan}
	Let $\Omega=\left\{\omega_1,\ldots,\omega_M\right\}$ be an LF-inducing set. For every $N\geq 2$ and every $\gamma=(a(1),a(2),\ldots,a(N-1))\in\{1,\ldots,M\}^{N-1}$, the set $L_\gamma^+$ is a Lelek fan with top $\mathbf{v}=(0,0,\ldots)$, and every endpoint of $L_\gamma^+$ is an endpoint of $I_{F_\Omega}^+$.
\end{lemma}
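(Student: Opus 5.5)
The plan is to show that $L_\gamma^+$ is homeomorphic to $[0,\alpha(\gamma)]^+_{G_{\alpha(\gamma)}}$ by a homeomorphism sending $\mathbf{v}$ to $\mathbf{v}$, and then to invoke Lemma~\ref{Lemma: G_alpha yields a Lelek fan}. Write $\alpha=\alpha(\gamma)$. Then $L_\gamma^+$ consists of the sequences $(x_1,\ldots,x_N,x_{N+1},\ldots)$ such that $(x_1,\ldots,x_N)\in A_\gamma$ and $(x_N,x_{N+1},\ldots)\in[0,\alpha]^+_{G_\alpha}$.

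First I will observe that the projection $\pi_N$ restricted to $A_\gamma$ is a homeomorphism onto $[0,\alpha]$. Its inverse is explicit: $x_N=t$ determines $x_1=t/(\omega_{a(1)}\cdots\omega_{a(N-1)})$ and, recursively, every other coordinate. Call this inverse $\theta\colon[0,\alpha]\to A_\gamma$. Then define
\[
\Phi\colon [0,\alpha]^+_{G_\alpha}\to L_\gamma^+,\qquad \Phi(y_1,y_2,\ldots)=\bigl(\theta(y_1),y_2,y_3,\ldots\bigr),
\]
where $\theta(y_1)$ occupies coordinates $1,\ldots,N$ and in particular has $N$th coordinate $y_1$. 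The map $\Phi$ is continuous and bijective, with inverse $\mathbf{x}\mapsto(x_N,x_{N+1},\ldots)$, and $\Phi(\mathbf{v})=\mathbf{v}$. By compactness $\Phi$ is a homeomorphism. So $L_\gamma^+$ is a Lelek fan, and its top is $\mathbf{v}$: the top of $[0,\alpha]^+_{G_\alpha}$ is $(0,0,\ldots)$, as in the cited theorem from \cite{banic1} transported by the scaling conjugacy.

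For the endpoint claim, note first that $L_\gamma^+\subseteq I^+_{F_\Omega}$. Indeed, consecutive coordinates in the $A_\gamma$ part satisfy $x_{j+1}=\omega_{a(j)}x_j$, and those in the tail lie in $G_\alpha\subseteq F_{\{\omega_1,\omega_2\}}\subseteq F_\Omega$.

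Now take an endpoint $\mathbf{e}$ of $L_\gamma^+$. In a smooth fan an endpoint is either the top or an endpoint of a maximal arc from the top. In the Lelek fan the top is not an endpoint, so $\mathbf{e}\neq\mathbf{v}$. By Lemma~\ref{Lemma: I_F^+ is a smooth fan}, $I^+_{F_\Omega}$ is a smooth fan, and $\mathbf{e}$ lies on a unique leg $C_{\mathbf{a}}$. This leg is the arc $\psi([0,c])$ for the maximal $c$ such that all coordinates $\omega_{a(k)}\cdots\omega_{a(1)}c$ stay in $[0,1]$. Suppose $\mathbf{e}$ were not an endpoint of $I^+_{F_\Omega}$. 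Then $e_1<c$, so no coordinate of $\mathbf{e}$ equals the bound that is active for the whole leg. I need to show that $\mathbf{e}$ could then be pushed further along the same leg inside $L_\gamma^+$, which contradicts its being an endpoint there.

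Here lies the main obstacle. The coordinates beyond $N$ must stay $\le\alpha$ rather than $\le 1$, so one must show that this constraint is automatically no stronger. Since $x_N\le\alpha$ is exactly what makes the first $N$ coordinates lie in $I$, the two maximality conditions should be compared directly. I would try the following. On the leg, the maximal parameter for $L_\gamma^+$ is
\[
c'=\min\Bigl(\min_{j\le N}\frac{1}{P_j},\ \min_{k>N}\frac{\alpha}{P_k}\Bigr),
\]
where $P_k$ is the product of slopes up to coordinate $k$. The maximal parameter for $I^+_{F_\Omega}$ is $c=\min_k 1/P_k$. The needed identity $c=c'$ must use that $\alpha$ is precisely $1/\max_{j\le N}(P_j/P_N)$.

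If this identity fails for certain tails, I would instead use density. An endpoint $\mathbf{e}$ of a Lelek fan is a limit of endpoints, and I would check the claim only for legs where the $\alpha$-constraint is not binding. Then I would argue that an arc of $L_\gamma^+$ ending at $\mathbf{e}$ cannot be properly extended in $I^+_{F_\Omega}$, because the extension would leave $[0,\alpha]$ at coordinate $N$. I expect this last comparison to be the delicate step.
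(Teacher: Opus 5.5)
Your first half matches the paper's argument. Your $\Phi$ is exactly its map $h$, with inverse $\xx\mapsto(x_N,x_{N+1},\ldots)$, and together with Lemma~\ref{Lemma: G_alpha yields a Lelek fan} it shows that $L_\gamma^+$ is a Lelek fan with top $\mathbf{v}$. Your check that $L_\gamma^+\se I^+_{F_\Omega}$ is also fine. The paper's proof stops at that point and never addresses the endpoint clause.

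That clause is where your proposal has a gap, and the gap cannot be closed, because the clause is false. Finish your own computation. Since $\alpha=P_N/\max_{j\le N}P_j$, your first minimum equals $\alpha/P_N$, so
\[
c'=\frac{\alpha}{\sup_{k\ge N}P_k}=\frac{P_N}{\max_{j\le N}P_j\cdot\sup_{k\ge N}P_k},\qquad c=\frac{1}{\max\{\max_{j\le N}P_j,\ \sup_{k\ge N}P_k\}}.
\]
Hence $c'=c$ holds if and only if $P_N=\min\{\max_{j\le N}P_j,\ \sup_{k\ge N}P_k\}$. As soon as $\alpha<1$ and some tail coordinate exceeds $x_N$, you get $c'<c$.

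Here is a concrete counterexample. Let $\Omega=\{1/2,3\}$, $N=2$ and $\gamma=(1)$, so $A_\gamma$ is the segment $x_2=x_1/2$ and $\alpha=1/2$. Consider the point
\[
\mathbf{e}=(1/3,1/6,1/2,1/4,1/8,\ldots).
\]
It lies in $L_\gamma^+$, since $(1/6,1/2)\in G_{1/2}$. Its leg in $L_\gamma^+$ is $\{(t,t/2,3t/2,3t/4,\ldots)\colon 0\le t\le 1/3\}$, with the bound on $t$ forced by $x_3=3t/2\le\alpha$. So $\mathbf{e}$ is an endpoint of the fan $L_\gamma^+$. However, $\{(t,t/2,3t/2,3t/4,\ldots)\colon 0\le t\le 2/3\}$ is an arc in $I^+_{F_\Omega}$ containing $\mathbf{e}$ in its interior, so $\mathbf{e}$ is not an endpoint of $I^+_{F_\Omega}$.

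Your fallback does not rescue this. The extension leaves $[0,\alpha]$ at coordinate $3$, not at coordinate $N=2$. Moreover, a density argument cannot establish a statement about every endpoint.

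The clause has to be dropped, or the construction changed. One fix is to replace the tail $[0,\alpha]^+_{G_\alpha}$ by $\{\yy\in I^+_{F_{\{\omega_1,\omega_2\}}}\colon y_1\le\alpha\}$. This gives $c'=c$ on every leg, and this truncated subfan is still a Lelek fan:
\begin{itemize}
\item points with $y_1<\alpha$ are approximated by endpoints of the big fan;
\item points with $y_1=\alpha$ are themselves endpoints.
\end{itemize}
Some repair of this kind is needed, because the density-of-endpoints argument in Theorem~\ref{Proposition: LF-inducing yields Lelek fan (one-sided)} relies on exactly this clause.
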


\begin{proof}
	We have from Lemma~\ref{Lemma: G_alpha yields a Lelek fan} that $[0,\alpha(\gamma)]_{G_{\alpha(\gamma)}}^+$ is a Lelek fan. We can define a function $h\colon [0,\alpha(\gamma)]_{G_{\alpha(\gamma)}}^+\to L_\gamma^+$ as follows: given $\xx=(x_1,x_2,\ldots)\in [0,\alpha(\gamma)]_{G_{\alpha(\gamma)}}^+$, we define
	\[
	h(\xx)=\left(\frac{x_1}{\omega_{a(1)}\cdots\omega_{a(n-2)}\omega_{a(n-1)}},\ldots,\frac{x_1}{\omega_{a(n-2)}\omega_{a(n-1)}},\frac{x_1}{\omega_{a(n-1)}},x_1,x_2,x_3\ldots\right).
	\]
	Since $x_1\in[0,\alpha(\gamma)]$ and $\alpha(\gamma)=\pi_n(A_\gamma)$, we have that $h(\xx)\in I^\N$, so $h(\xx)\in L_\gamma$.
	
	The function $h$ is invertible with $h^{-1}(x_1,x_2,\ldots)=(x_{n},x_{n+1},\ldots)$, and clearly both $h$ and $h^{-1}$ are continuous. Thus $h$ is a homeomorphism, so $L_\gamma^+$ is a Lelek fan. 
\end{proof}

\begin{theorem}\label{Proposition: LF-inducing yields Lelek fan (one-sided)}
	If $\Omega=\left\{\omega_1,\ldots,\omega_M\right\}$ is an LF-inducing set, then $I_{F_\Omega}^+$ is a Lelek fan.
\end{theorem}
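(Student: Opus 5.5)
By Lemma~\ref{Lemma: I_F^+ is a smooth fan}, $I_{F_\Omega}^+$ is already a smooth fan with top $\mathbf{v}=(0,0,\ldots)$. A Lelek fan is the unique smooth fan with a dense set of endpoints, so it only remains to show that the endpoints of $I_{F_\Omega}^+$ are dense in $I_{F_\Omega}^+$. I would do this by approximating an arbitrary point of $I_{F_\Omega}^+$ with points of the sub-fans $L_\gamma^+$ from Lemma~\ref{Lemma: L_gamma^+ is a Lelek fan}. Each of these is a Lelek fan sitting inside $I_{F_\Omega}^+$, and their endpoints are endpoints of $I_{F_\Omega}^+$.

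First I check that $L_\gamma^+\subseteq I_{F_\Omega}^+$. The first $N$ coordinates lie in $A_\gamma$, so consecutive ones are related by some $\omega_{a(j)}$. After that the coordinates follow $G_{\alpha(\gamma)}\subseteq F_{\{\omega_1,\omega_2\}}\subseteq F_\Omega$.

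Next, take $\mathbf{y}\in I_{F_\Omega}^+$ and $\epsilon>0$. Choose $N$ with $2^{-N}<\epsilon/2$. If $\mathbf{y}=\mathbf{v}$, any nearby point of a single fan $L_\gamma^+$ close to its top works, because endpoints of a Lelek fan accumulate at the top. Otherwise $\mathbf{y}\in C_{\mathbf{a}}$ for a unique index sequence $\mathbf{a}$. Let $\gamma=(a(1),\ldots,a(N-1))$. Then $\pi_{[1,N]}(\mathbf{y})\in A_\gamma$, so $y_N\in[0,\alpha(\gamma)]$.

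Now the point $\mathbf{z}=h(y_N,\omega_1 y_N,\omega_1^2y_N,\ldots)$ lies in $L_\gamma^+$. Here $h$ is the homeomorphism from the proof of Lemma~\ref{Lemma: L_gamma^+ is a Lelek fan}, and the tail is admissible because $\omega_1<1$ keeps it in $[0,\alpha(\gamma)]$. The point $\mathbf{z}$ agrees with $\mathbf{y}$ in the first $N$ coordinates, so $D(\mathbf{y},\mathbf{z})\le 2^{-N}<\epsilon/2$.

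Since $L_\gamma^+$ is a Lelek fan, its endpoints are dense in it. So there is an endpoint $\mathbf{e}$ of $L_\gamma^+$ with $D(\mathbf{e},\mathbf{z})<\epsilon/2$. By Lemma~\ref{Lemma: L_gamma^+ is a Lelek fan}, $\mathbf{e}$ is also an endpoint of $I_{F_\Omega}^+$, and $D(\mathbf{e},\mathbf{y})<\epsilon$. Hence the endpoints are dense.

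The main obstacle is the claim, asserted in Lemma~\ref{Lemma: L_gamma^+ is a Lelek fan} but not really argued there, that endpoints of $L_\gamma^+$ are endpoints of $I_{F_\Omega}^+$. If I need to verify it, I would argue as follows. An endpoint $\mathbf{e}\neq\mathbf{v}$ of $L_\gamma^+$ lies on an arc $C_{\mathbf{b}}$ of $I_{F_\Omega}^+$, with $\mathbf{b}$ beginning with $\gamma$ and continuing with indices in $\{1,2\}$. That arc is exactly the intersection of $L_\gamma^+$ with the corresponding arc $B_{\mathbf{b}}$, since $C_{\mathbf{b}}$ is determined by the constraint that every coordinate lies in $I$, and this constraint is the same in both spaces. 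So $\mathbf{e}$ being the far end of its arc in $L_\gamma^+$ means it is the far end of $C_{\mathbf{b}}$ in $I_{F_\Omega}^+$. Because $I_{F_\Omega}^+$ is a fan whose arcs from $\mathbf{v}$ are exactly the $C_{\mathbf{b}}$, that end is an endpoint of $I_{F_\Omega}^+$.
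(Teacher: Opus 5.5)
Your outline is the paper's own argument. You match the first $N$ coordinates by a point of $L_\gamma^+$, take a nearby endpoint of $L_\gamma^+$, and transfer endpoint status to $I_{F_\Omega}^+$. You are right that the transfer is the crux, but your justification fails, and the claim itself is false.

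In $L_\gamma^+$ the coordinates $x_j$ with $j\ge N$ are confined to $[0,\alpha(\gamma)]$, because that is what $G_{\alpha(\gamma)}$ imposes; they are not merely confined to $I$. So the constraint is \emph{not} the same in both spaces. The set $L_\gamma^+\cap B_{\mathbf b}$ can be a proper initial subarc of $C_{\mathbf b}$, and its far end is then an endpoint of $L_\gamma^+$ lying in the interior of $C_{\mathbf b}$.

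Concretely, take $\Omega=\{1/2,3\}$, $N=2$ and $\gamma=(1)$, so $A_\gamma=\{(t,t/2)\}$ and $\alpha(\gamma)=1/2$, and take $\mathbf b=(1,2,1,1,\ldots)$. Then $C_{\mathbf b}=\{(t,t/2,3t/2,3t/4,\ldots):0\le t\le 2/3\}$. In contrast, $L_\gamma^+\cap B_{\mathbf b}$ requires $3t/2\le 1/2$, i.e. $t\le 1/3$. Its far end $(1/3,1/6,1/2,1/4,\ldots)$ is an endpoint of $L_\gamma^+$ but an interior point of $C_{\mathbf b}$, so it is not an endpoint of $I^+_{F_\Omega}$.

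More generally, when $\alpha(\gamma)<1$, the only endpoints of $L_\gamma^+$ that remain endpoints in $I^+_{F_\Omega}$ have $x_N=\alpha(\gamma)$. They therefore all share the first $N$ coordinates of the far end of $A_\gamma$, so choosing the endpoint more carefully cannot rescue the density step. The paper's proof rests on the same assertion from Lemma~\ref{Lemma: L_gamma^+ is a Lelek fan}, whose proof only constructs the homeomorphism $h$. The gap is inherited rather than introduced by you, but it is a gap.

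You can bypass $L_\gamma^+$ entirely. For $\mathbf x\neq\mathbf v$ every coordinate is positive, and the arc of $I^+_{F_\Omega}$ through $\mathbf x$ is $\{\lambda\mathbf x:0\le\lambda\le 1/\sup_j x_j\}$. Hence the endpoints are exactly the nonzero points with $\sup_j x_j=1$.

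Given $\mathbf x\neq\mathbf v$ and $N$, keep $x_1,\ldots,x_N$ and continue from $x_N$ greedily: multiply by $\omega_2$ when the result stays $\le 1$, and by $\omega_1$ otherwise. Write $u=-\log x$, $c=-\log\omega_1$ and $d=\log\omega_2$. The $u$-orbit enters $[0,c+d)$ and from then on is the rotation $u\mapsto u+c \pmod{c+d}$. This rotation is irrational because $(\omega_1,\omega_2)\in\mathcal{NC}$, so $\inf_j u_j=0$.

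The resulting point $\mathbf e$ lies in $I^+_{F_\Omega}$, has $\sup_j e_j=1$, and so is an endpoint with $D(\mathbf x,\mathbf e)\le 2^{-(N+1)}$. Since $\mathbf v$ is a limit of nonzero points, the endpoints are dense.
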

\begin{proof}
	By Lemma~\ref{Lemma: I_F^+ is a smooth fan}, $I_{F_\Omega}^+$ is a smooth fan, so we only need to show that the set of endpoints of $I_{F_\Omega}^+$ is dense in $I_{F_\Omega}^+$. Let $\mathbf{x}\in I_{F_\Omega}^+$, and let $\e>0$. Fix $N\in\N$ such that $2^N<\e$. Since $\mathbf{x}\in I_{F_\Omega}^+$, there exists a sequence $\mathbf{a}\in\{1,\ldots,M\}^\N$ such that $x_{j+1}=\omega_{a(j)}x_{j}$ for every $j\in\N$. Let $\gamma=(a(1),\ldots,a(N-1))$. Then $L_\gamma^+$ is a Lelek fan by Lemma~\ref{Lemma: L_gamma^+ is a Lelek fan}. Fix a point $\mathbf{y}\in L_\gamma^+$ such that $y_j=x_j$ for all $j\in\{1,\ldots,N\}$. Then $D(\mathbf{x},\mathbf{y})\leq 1/2^{N+1}<\e/2$.
	
	Since $\mathbf{y}\in L_\gamma^+$, there exists an endpoint $\mathbf{e}\in L_\gamma$ such that $D(\mathbf{y},\mathbf{e})<\e/2$. Note that $\mathbf{e}$ is also an endpoint of $I_{F_\Omega}^+$, and we have
	\[
	D(\mathbf{x},\mathbf{e})<D(\mathbf{x},\mathbf{y})+D(\mathbf{y},\mathbf{e})<\frac{\e}{2}+\frac{\e}{2}=\e.
	\]
	Therefore, the set of endpoints of $I_{F_\Omega}^+$ is dense in $I_{F_\Omega}^+$, so $I_{F_\Omega}^+$ is a Lelek fan.
\end{proof}

We now turn our attention to $I_{F_\Omega}$. The proofs for $I_{F_\Omega}$ are nearly identical to the respective proofs for $I_{F_\Omega}^+$.

\begin{lemma}\label{Lemma: I_F is a smooth fan}
	If $\Omega=\left\{\omega_1,\ldots,\omega_M\right\}$ is an LF-inducing set, then $I_{F_\Omega}$ is a smooth fan.
\end{lemma}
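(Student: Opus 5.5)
We will follow the proof of Lemma~\ref{Lemma: I_F^+ is a smooth fan}, now indexing by bi-infinite sequences. For each $\mathbf{a}=(\ldots,a(-1),a(0),a(1),\ldots)\in\{1,\ldots,M\}^{\Z}$ we set
\[
B_{\mathbf{a}}=\left\{\xx\in I^{\Z}\colon x_{j+1}=\omega_{a(j)}x_j\text{ for all }j\in\Z\right\},\qquad C_{\mathbf{a}}=B_{\mathbf{a}}\cap I_{F_\Omega}.
\]
A point of $B_{\mathbf{a}}$ is determined by its coordinate $x_0$, since every $\omega_j>0$. The coordinates depend linearly on $x_0$, with all other coordinates obtained by multiplying or dividing by products of the $\omega$'s. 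The constraint that every coordinate lie in $I$ is therefore $x_0\le c_{\mathbf{a}}$, where
\[
c_{\mathbf{a}}=\inf_{j}\bigl(\text{products of the }\omega\text{'s}\bigr)^{-1}\in[0,1].
\]
Hence $C_{\mathbf{a}}$ is either the arc $\{t\cdot\mathbf{u}_{\mathbf{a}}\colon t\in[0,c_{\mathbf{a}}]\}$, parametrised by $x_0=t$, or the singleton $\{\mathbf{v}\}$, where $\mathbf{v}=(\ldots,0,0;0,\ldots)$.

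Next we show that every point of $I_{F_\Omega}$ lies in some $C_{\mathbf{a}}$. If $\xx\in I_{F_\Omega}$ and $\xx\neq\mathbf{v}$, then some coordinate is nonzero. Because each $(x_j,x_{j+1})$ lies on a line $y=\omega x$ with $\omega>0$, all coordinates are then nonzero, so the index $a(j)$ with $x_{j+1}=\omega_{a(j)}x_j$ is uniquely determined; this uses that the $\omega_j$ are distinct, as they form a set. Consequently $C_{\mathbf{a}}\cap C_{\mathbf{b}}=\{\mathbf{v}\}$ whenever $\mathbf{a}\ne\mathbf{b}$, and $I_{F_\Omega}=\bigcup_{\mathbf{a}} C_{\mathbf{a}}$.

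This makes $I_{F_\Omega}$ a union of arcs meeting only at $\mathbf{v}$. It is a continuum, being a closed subset of $I^{\Z}$ that is arcwise connected through $\mathbf{v}$. It is also hereditarily unicoherent, since every arc from $\mathbf{v}$ is the unique radial segment $\{t\xx\colon t\in[0,1]\}$. Nondegeneracy holds because $\omega_1<1<\omega_2$ yields nontrivial arcs, for example by using $\omega_1$ for forward indices and $\omega_2$ for backward indices. So $I_{F_\Omega}$ is a fan with top $\mathbf{v}$.

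For smoothness, let $\mathbf{z}(n)\to\yy\neq\mathbf{v}$ with $\mathbf{z}(n)\in C_{\mathbf{b}(n)}$ and $\yy\in C_{\mathbf{a}}$. The arc $[\mathbf{v},\yy]$ is $\{t\yy\colon t\in[0,1]\}$, and likewise $[\mathbf{v},\mathbf{z}(n)]=\{t\mathbf{z}(n)\colon t\in[0,1]\}$, because scaling preserves $I_{F_\Omega}$. The map $(t,\xx)\mapsto t\xx$ is continuous on $[0,1]\times I^{\Z}$, so
\[
d_H\bigl([\mathbf{v},\mathbf{z}(n)],[\mathbf{v},\yy]\bigr)\le \sup_{t\in[0,1]} D\bigl(t\mathbf{z}(n),t\yy\bigr)\le D(\mathbf{z}(n),\yy)\to0.
\]
This convergence of arcs is exactly smoothness. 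It also avoids the coordinatewise convergence of index sequences used in the one-sided proof, although that route works as well: for each fixed $j$, $y_j\neq0$ forces $b(n,j)=a(j)$ for large $n$.

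The main obstacle is the bookkeeping required to show that $C_{\mathbf{a}}$ is genuinely an arc, a segment in $x_0$, rather than something degenerate, together with the verification that the two-sided product is a fan. The hereditary unicoherence and the fan structure both follow from the radial description $\{t\xx\}$, so the scaling invariance of $I_{F_\Omega}$ is the key observation to check carefully.
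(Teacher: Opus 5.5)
Your decomposition into the sets $C_{\mathbf{a}}$, $\mathbf{a}\in\{1,\ldots,M\}^{\Z}$, and your conclusion that $I_{F_\Omega}$ is a fan with top $\mathbf{v}$, follow the paper exactly. You are more careful than the paper about $C_{\mathbf{a}}$ possibly degenerating to $\{\mathbf{v}\}$ and about nondegeneracy.

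The smoothness step is where you genuinely differ. The paper parametrizes the arcs by $\psi$ and $\psi_n$. It then argues that the index sequences $\mathbf{b}(n)$ converge coordinatewise to $\mathbf{a}$, so the parametrized arcs converge. You instead observe that the arcs are the radial segments $\{t\xx\colon t\in[0,1]\}$, which are what the paper's $\psi$ parametrizes up to rescaling. Since $D(t\zz(n),t\yy)=t\,D(\zz(n),\yy)$, you get the explicit bound $d_H([\mathbf{v},\zz(n)],[\mathbf{v},\yy])\le D(\zz(n),\yy)$. This is shorter and quantitative. It needs no bookkeeping with index sequences or with the differing parameter intervals $[0,z(n,0)]$ and $[0,y_0]$, and it covers $\yy=\mathbf{v}$ for free.

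The price is that your argument rests entirely on the claim that the only arc in $I_{F_\Omega}$ from $\mathbf{v}$ to $\xx$ is $\{t\xx\colon t\in[0,1]\}$. The reasons you give do not establish this. A closed disk is invariant under $\xx\mapsto t\xx$ and is a union of radii meeting only at the center, yet it is not uniquely arcwise connected. The paper also simply asserts this uniqueness, so this is not a defect relative to the paper, but the justification should be supplied.

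What actually proves it is the fact you mention only in passing at the end. On $I_{F_\Omega}\setminus\{\mathbf{v}\}$ every coordinate is nonzero, and each ratio $x_{j+1}/x_j$ is continuous with values in the finite set $\Omega$. Hence $\xx\mapsto\mathbf{a}(\xx)$ is continuous into the totally disconnected space $\{1,\ldots,M\}^{\Z}$. So every connected subset of $I_{F_\Omega}\setminus\{\mathbf{v}\}$ lies in a single $C_{\mathbf{a}}$; in particular this holds for $\gamma\setminus\{\mathbf{v}\}$ when $\gamma$ is an arc from $\mathbf{v}$.

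The same local constancy, with a short clopen-set argument, shows that every subcontinuum meets each $C_{\mathbf{a}}$ in a connected set. That is what hereditary unicoherence needs; unique arcwise connectedness alone does not imply it. With this step inserted, your proof is complete and somewhat tighter than the paper's.
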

\begin{proof}
	For each $\mathbf{a}=(\ldots,a(-2),a(-1),a(0),a(1),a(2),\ldots)\in\{1,\ldots,M\}^\Z$, define the sets
	\begin{align*}
		B_{\mathbf{a}}&=\left\{\xx\in I^\Z\colon x_{j+1}=\omega_{a(j)}x_j\text{ for all }j\in\Z\right\},\\
		C_{\mathbf{a}}&=B_{\mathbf{a}}\cap I_{F_\Omega}.
	\end{align*}
	The set $B_{\mathbf{a}}$ is an arc, and we have that $C_{\mathbf{a}}$ is either an arc or the singleton $\{(0,0,\ldots)\}$. Additionally, given any two sequences $\mathbf{a},\mathbf{b}\in\{1,\ldots,M\}^\Z$ with $\mathbf{a}\neq\mathbf{b}$, we have that $C_{\mathbf{a}}\cap C_{\mathbf{b}}=\{(0,0,\ldots)\}$.
	
	Observe that
	\[
	I_{F_\Omega}=\bigcup_{\mathbf{a}\in \{1,\ldots,M\}^\Z} C_{\mathbf{a}},
	\]
	so $I_{F_\Omega}$ is a fan with top $\mathbf{v}=(0,0,0,\ldots)$.
	
	To see that $I_{F_\Omega}$ is smooth, let $\mathbf{y}\in I_{F_\Omega}\setminus\{\mathbf{v}\}$ and suppose $(\mathbf{z}(n))_{n=1}^\infty$ is a sequence in $I_{F_\Omega}$ converging to $\mathbf{y}$. Without loss of generality, we suppose that for all $n\in\Z$, $\mathbf{z}(n)\neq\mathbf{v}$. Then for each $n\in\Z$, there is a unique sequence $\mathbf{b}(n)=(b(n,1),b(n,2),\ldots)\in\{1,\ldots,M\}^\Z$ such that $\mathbf{z}(n)\in C_{\mathbf{b}(n)}$, and there exists a unique $\mathbf{a}\in\{1,\ldots,M\}^\Z$ such that $\mathbf{y}\in C_{\mathbf{a}}$.
	
	Define the functions $\psi\colon[0,y_1]\to C_{\mathbf{b}}$ and $\psi_n\colon[0,z(n,1)]\to C_{\mathbf{b}(n)}$ for each $n\in\Z$ as follows:
	\begin{itemize}
		\item Given $t\in[0,y_0]$, we set $\psi(t)=\mathbf{u}$ where 
		\begin{itemize}
			\item $u_0=t$; 
			\item For all $j\geq 0$, $u_{j+1}=\omega_{a(j)}u_j$;
			\item For all $j\leq 0$, $u_{j-1}=u_j/\omega_{a(j-1)}$.
		\end{itemize}
		\item Given $t\in[0,z(n,0)]$, we set $\psi_n(t)=\mathbf{w}$ where
		\begin{itemize}
			\item $u_0=t$; 
			\item For all $j\geq 0$, $u_{j+1}=\omega_{b(n,j)}w_j$;
			\item For all $j\leq 0$, $w_{j-1}=w_j/\omega_{b(n,j-1)}$.
		\end{itemize}
	\end{itemize}
	
	Then the image $\psi([0,y_0])$ is the unique arc in $I_{F_\Omega}$ from $\mathbf{v}$ to $\mathbf{y}$, and the image $\psi_n([0,z(n,0)])$ is the unique arc in $I_{F_\Omega}$ from $\mathbf{v}$ to $\mathbf{z}(n)$. By assumption $(\mathbf{z}(n))_{n=1}^\infty$ converges to $\mathbf{y}$, so we may look inductively at each coordinate to conclude that for each $i\in\Z$, $(b(n,i))_{n=1}^\infty$ converges to $a(i)$. We thus conclude that the arcs $\psi_n([0,z(n,0)])$ converge to the arc $\psi([0,y_0])$. Therefore $I_{F_\Omega}$ is a smooth fan.
\end{proof}

We establish additional notation that will be used in Lemma~\ref{Lemma: L_gamma is a Lelek fan} and Theorem~\ref{Proposition: LF-inducing yields Lelek fan (two-sided)}. Given a finite sequence of indices $\gamma=(a(-N),\ldots,a(0),\ldots,a(N-1))\in\{1,\ldots,M\}^{2N}$, we define
\[
A_\gamma=\left\{\left(x_{-N},\ldots,x_0,\ldots, x_{N}\right)\in I^{2N+1}\colon x_{j+1}=\omega_{a(j)}x_j\text{ for all }j\in\{-N,\ldots,N-1\}\right\}.
\]
Observe that $A_{\gamma}$ is an arc.  Let $\alpha(\gamma)$ and $\beta(\gamma)$ be the real numbers such that $\pi_{-N}(A_\gamma)=[0,\alpha(\gamma)]$ and $\pi_N(A_\gamma)=[0,\beta(\gamma)]$. Observe that $\alpha(\gamma),\beta(\gamma)\in (0,1]$. 

As before, let
\begin{align*}
	G_{\alpha(\gamma)}&=F_{\{\omega_1,\omega_2\}}\cap\left([0,\alpha(\gamma)]\times[0,\alpha(\gamma)]\right),\text{ and }\\
	G_{\beta(\gamma)}&=F_{\{\omega_1,\omega_2\}}\cap\left([0,\beta(\gamma)]\times[0,\beta(\gamma)]\right).
\end{align*}
and define
\[
L_\gamma=[0,\alpha(\gamma)]_{G_{\alpha(\gamma)}^{-1}}^+\star A_\gamma\star[0,\beta(\gamma)]_{G_{\beta(\gamma)}}^+.
\]

\begin{lemma}\label{Lemma: L_gamma is a Lelek fan}
	Let $\Omega=\left\{\omega_1,\ldots,\omega_M\right\}$ be an LF-inducing set. For every $N\geq 1$ and every $\gamma=(a(-N),\ldots,a(N-1))\in\{1,\ldots,M\}^{2N}$, the set $L_\gamma$ is a Lelek fan with top $\mathbf{v}=(0,0,\ldots)$, and every endpoint of $L_\gamma$ is an endpoint of $I_{F_\Omega}$.
\end{lemma}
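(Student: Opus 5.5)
The plan mirrors the one-sided case, Lemma~\ref{Lemma: L_gamma^+ is a Lelek fan}. The set $L_\gamma$ consists of two-sided sequences $\xx=(\ldots,x_{-N-1},x_{-N},\ldots,x_N,x_{N+1},\ldots)$ with three pieces. The backward tail $(x_{-N},x_{-N-1},x_{-N-2},\ldots)$ lies in $[0,\alpha(\gamma)]^+_{G^{-1}_{\alpha(\gamma)}}$. The middle block $(x_{-N},\ldots,x_N)$ lies in $A_\gamma$. The forward tail $(x_N,x_{N+1},\ldots)$ lies in $[0,\beta(\gamma)]^+_{G_{\beta(\gamma)}}$.

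The middle block is an arc determined by its coordinate $x_0$. Concretely, $x_N=c\,x_{-N}$ with $c=\omega_{a(N-1)}\cdots\omega_{a(-N)}>0$, and each coordinate $x_j$ is a fixed positive multiple of $x_{-N}$. So $L_\gamma$ is the fibered product of the two one-sided fans $P=[0,\alpha(\gamma)]^+_{G^{-1}_{\alpha(\gamma)}}$ and $Q=[0,\beta(\gamma)]^+_{G_{\beta(\gamma)}}$, glued along a linear identification of first coordinates $x_N=c\,x_{-N}$. Both $P$ and $Q$ are Lelek fans by Lemma~\ref{Lemma: G_alpha yields a Lelek fan}.

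Rather than identifying $L_\gamma$ with a product-type object directly, I would argue the way the paper does for $I^+_{F_\Omega}$.

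\emph{Smooth fan.} For each pair of index sequences (backward and forward tails), the corresponding set $C_{\mathbf a}$ is an arc from $\mathbf v$ or is $\{\mathbf v\}$, and distinct index sequences meet only at $\mathbf v$. Hence $L_\gamma$ is a fan with top $\mathbf v$. Smoothness follows exactly as in Lemma~\ref{Lemma: I_F is a smooth fan}: if $\mathbf z(n)\to\mathbf y\neq\mathbf v$, then the index sequences converge coordinatewise, so the arcs $[\mathbf v,\mathbf z(n)]$ converge to $[\mathbf v,\mathbf y]$. Note that $L_\gamma\subseteq I_{F_\Omega}$, since $F_{\{\omega_1,\omega_2\}}\subseteq F_\Omega$.

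\emph{Dense endpoints.} Let $\xx\in L_\gamma$ and $\e>0$. Let $\mathbf p=(x_{-N},x_{-N-1},\ldots)\in P$ and $\mathbf q=(x_N,x_{N+1},\ldots)\in Q$. Choose $K$ large enough that coordinates of index $|j|>K$ contribute less than $\e$ to the metric. Within $Q$ I would first perturb $\mathbf q$ to an endpoint $\mathbf q'$ of $Q$ close to $\mathbf q$. Then I would shrink or extend the backward part along its own arc so that the first coordinates match, $q'_1=c\,p'_1$.

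This matching is the main obstacle. The endpoints of $L_\gamma$ are exactly the points that are simultaneously maximal in both tails, namely $\xx$ lies on an arc $C_{\mathbf a}$ and $x_0$ is the largest possible value on that arc. Moreover, the maximum is governed by whichever side first hits the value $1$ (or $\alpha$, $\beta$). So I would use density of endpoints in $P$ and $Q$ in the following strong form: near any point of a Lelek fan there are endpoints lying on arcs whose full length is only slightly more than that of the given point. Then, using the never-connect condition, I would pick far-out tail indices (beyond $K$) on one side so that that side's arc terminates at the prescribed height, while the other side's arc is long. This is done by appending a long run of $\omega_1<1$ on the other side, so that it never reaches $1$. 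Such a point is an endpoint of $L_\gamma$ within $\e$ of $\xx$.

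\emph{Endpoints of $L_\gamma$ are endpoints of $I_{F_\Omega}$.} An endpoint $\mathbf e$ of $L_\gamma$ is the end of its arc $C_{\mathbf a}$, meaning some coordinate reaches its upper bound $1$. The arc through $\mathbf e$ in $I_{F_\Omega}$ is the same $C_{\mathbf a}$, since the arc in $I_{F_\Omega}$ determined by $\mathbf a$ is bounded by the same coordinate constraints. Hence $\mathbf e$ is also an endpoint of $I_{F_\Omega}$, exactly as in the one-sided lemma.
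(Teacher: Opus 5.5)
\textbf{The gap is in your last step, and that half of the statement is in fact false for $L_\gamma$ as defined.} You assert that an endpoint $\mathbf e$ of $L_\gamma$ has a coordinate equal to $1$, and that the ray through $\mathbf e$ in $I_{F_\Omega}$ is cut off by ``the same coordinate constraints.''

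But by definition $L_\gamma$ confines the backward tail $(x_j)_{j\le -N}$ to $[0,\alpha(\gamma)]$ and the forward tail $(x_j)_{j\ge N}$ to $[0,\beta(\gamma)]$. In $I_{F_\Omega}$ the only requirement is that coordinates lie in $[0,1]$. So a ray of $L_\gamma$ can stop because a tail coordinate reaches $\alpha(\gamma)<1$ or $\beta(\gamma)<1$, while the same ray continues in $I_{F_\Omega}$. You even note ``or $\alpha$, $\beta$'' one paragraph earlier.

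Here is a counterexample. Take $\Omega=\{1/2,3\}$, $N=1$, $\gamma=(1,1)$. Then $A_\gamma=\{(t,t/2,t/4)\colon t\in[0,1]\}$, $\alpha(\gamma)=1$ and $\beta(\gamma)=1/4$. Define $\mathbf e$ by $e_{-1-k}=3^{-k-1}$ and $e_{2+k}=2^{-k}/4$ for $k\ge 0$, with $e_0=1/6$ and $e_1=1/12$.
\begin{itemize}
\item $\mathbf e$ lies in $L_\gamma$.
\item $\mathbf e$ is an endpoint of $L_\gamma$, because $e_2=\beta(\gamma)$ forbids scaling up.
\item Yet $\sup_j e_j=1/3$, so $\lambda\mathbf e\in I_{F_\Omega}$ for every $\lambda\in[0,3]$, and $\mathbf e$ is not an endpoint of $I_{F_\Omega}$.
\end{itemize}
The paper's proof does not address this half at all; it only exhibits a homeomorphism with $I_{F_{\{\omega_1,\omega_2\}}}$.

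There are two workable repairs. You can take the tails in $[0,1]$, so that rays of $L_\gamma$ and of $I_{F_\Omega}$ are bounded by the same condition $\sup_j x_j\le 1$. More simply, you can run your far-tail tuning argument directly in $I_{F_\Omega}$, which proves density of its endpoints with no auxiliary $L_\gamma$ at all.

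\textbf{On the Lelek-fan half, your route differs from the paper's.} The paper uses exactly the gluing you noticed. Since $\beta(\gamma)=c\,\alpha(\gamma)$, you can delete the middle block and send $\mathbf y\in L_\gamma$ to the sequence whose $j$-th coordinate is $y_{j-N}/\alpha(\gamma)$ for $j\le 0$ and $y_{j+N}/\beta(\gamma)$ for $j\ge 0$; these agree at $j=0$. This is a homeomorphism onto $I_{F_{\{\omega_1,\omega_2\}}}$, which the paper takes to be a known Lelek fan. The paper's displayed $\Phi$ needs this index shift by $N$ to be well defined. This map gives the Lelek-fan half in one line.

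Your direct verification avoids relying on that two-sided two-line result, and its core is sound:
\begin{itemize}
\item Keep all coordinates with $j\le K$.
\item Beyond $K$, use first $\omega_1$'s and then $\omega_2$'s so the forward supremum lands in $[\beta(\gamma),(1+\eta)\beta(\gamma)]$. This is possible because $\{\omega_1^k\omega_2^l\colon k,l\in\mathbb{N}_0\}$ is dense in $(0,\infty)$ by the never-connect condition.
\item Then let the tail decay, and rescale by a factor in $[1/(1+\eta),1]$.
\end{itemize}
Two details need fixing. First, an endpoint needs only one binding constraint, not maximality in both tails. Second, the other side needs no modification, since it already satisfies its bound. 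In particular, appending $\omega_1$'s on the backward side would make coordinates grow by the factor $1/\omega_1$, the opposite of what you intend.
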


\begin{proof}
	We show that $L_\gamma$ is topologically conjugate to $I_{F_{\{\omega_1,\omega_2\}}}$ which is known to be a Lelek fan. We define $\Phi\colon I_{F_{\{\omega_1,\omega_2\}}}\to L_\gamma$  as follows: given $\xx=(x_j)_{j\in\Z}\in I_{F_{\{\omega_1,\omega_2\}}}$, we set $\Phi(\xx)=\yy$ where
	\begin{itemize}
		\item for $j\geq N$, $y_j=\beta(\gamma)x_j$,
		\item for $j\in\{-(N-1),-(N-2),\ldots,N-2,N-1\}$, $y_j=y_{j+1}/\omega_{a(j)}$, and
		\item for $j\leq -N$, $y_j=\alpha(\gamma)x_j$.
	\end{itemize}
	Then $\Phi$ is a topological conjugacy between $I_{F_{\{\omega_1,\omega_2\}}}$ and $L_\gamma$, so $L_\gamma$ is a Lelek fan.
\end{proof}

\begin{theorem}\label{Proposition: LF-inducing yields Lelek fan (two-sided)}
	If $\Omega=\left\{\omega_1,\ldots,\omega_M\right\}$ is an LF-inducing set, then $I_{F_\Omega}$ is a Lelek fan.
\end{theorem}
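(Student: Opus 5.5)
The argument runs parallel to the proof of Theorem~\ref{Proposition: LF-inducing yields Lelek fan (one-sided)}. By Lemma~\ref{Lemma: I_F is a smooth fan}, $I_{F_\Omega}$ is a smooth fan with top $\mathbf{v}=(\ldots,0,0;0,0,\ldots)$. Since a Lelek fan is characterized as the unique smooth fan with a dense set of endpoints, it only remains to show that the endpoints of $I_{F_\Omega}$ are dense in $I_{F_\Omega}$.

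Fix $\mathbf{x}\in I_{F_\Omega}$ and $\e>0$. I will use the two-sided product metric, weighting coordinate $i$ by $2^{-|i|}$. Choose $N\in\N$ with $2^{-N}<\e/2$, so that two points agreeing in the coordinates $-N,\ldots,N$ are within $\e/2$ of each other.

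If $\mathbf{x}=\mathbf{v}$, I take any nondegenerate $C_{\mathbf{a}}$ and apply the argument below to a point of it close to $\mathbf{v}$. Alternatively, I can use directly that $\mathbf{v}$ lies in every $L_\gamma$, which contains endpoints near $\mathbf{v}$.

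Otherwise, as in Lemma~\ref{Lemma: I_F is a smooth fan}, there is $\mathbf{a}\in\{1,\ldots,M\}^\Z$ with $x_{j+1}=\omega_{a(j)}x_j$ for all $j$. Let $\gamma=(a(-N),\ldots,a(N-1))$. Then $(x_{-N},\ldots,x_N)\in A_\gamma$, so $x_{-N}\le\alpha(\gamma)$ and $x_N\le\beta(\gamma)$.

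Next I build a point $\mathbf{y}\in L_\gamma$ that agrees with $\mathbf{x}$ on $[-N,N]$. In the forward direction I extend from $x_N$ using the relation $G_{\beta(\gamma)}$; for example, apply $\omega_1$ repeatedly, which stays in $[0,x_N]\subseteq[0,\beta(\gamma)]$ since $\omega_1<1$. In the backward direction I extend from $x_{-N}$ using $G_{\alpha(\gamma)}^{-1}$; dividing by $\omega_2>1$ likewise keeps the coordinates in $[0,\alpha(\gamma)]$. This gives $D(\mathbf{x},\mathbf{y})<\e/2$.

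By Lemma~\ref{Lemma: L_gamma is a Lelek fan}, $L_\gamma$ is a Lelek fan, so it contains an endpoint $\mathbf{e}$ with $D(\mathbf{y},\mathbf{e})<\e/2$. The same lemma says $\mathbf{e}$ is also an endpoint of $I_{F_\Omega}$. The triangle inequality then gives $D(\mathbf{x},\mathbf{e})<\e$, which proves density.

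The main obstacle is the step that cites Lemma~\ref{Lemma: L_gamma is a Lelek fan}. First, one needs $L_\gamma\subseteq I_{F_\Omega}$. This holds because $G_{\alpha(\gamma)}\cup G_{\beta(\gamma)}\subseteq F_{\{\omega_1,\omega_2\}}\subseteq F_\Omega$ and $A_\gamma$ uses only slopes from $\Omega$. Second, one needs that endpoints of $L_\gamma$ remain endpoints of $I_{F_\Omega}$. If I have to justify this myself, I would argue via the fan structure of Lemma~\ref{Lemma: I_F is a smooth fan}. An endpoint $\mathbf{e}\ne\mathbf{v}$ of $L_\gamma$ lies on an arc $C_{\mathbf{b}}$ and is maximal in $L_\gamma\cap C_{\mathbf{b}}$. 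Maximality within $L_\gamma$ means some coordinate hits its bound $\alpha(\gamma)$, $\beta(\gamma)$, or $1$. For a coordinate outside $[-N,N]$, the bound $\alpha(\gamma)$ or $\beta(\gamma)$ corresponds, through the middle block, to some coordinate of $A_\gamma$ equal to $1$. So the arc $C_{\mathbf{b}}$ cannot be extended in $I_{F_\Omega}$ either, and $\mathbf{e}$ is an endpoint there. This boundary bookkeeping, which uses the definition of $\alpha(\gamma)$ and $\beta(\gamma)$ as the exact projections of $A_\gamma$, is where I expect the care to be needed.
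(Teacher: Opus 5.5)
Your outline is the paper's argument: smooth fan by Lemma~\ref{Lemma: I_F is a smooth fan}, then density of endpoints by placing $\xx$ inside $L_\gamma$. The preliminary steps are fine: your $\yy$ does lie in $L_\gamma$, and $L_\gamma\se I_{F_\Omega}$. But the step you flagged is a genuine gap, and bookkeeping cannot close it, because the claim that every endpoint of $L_\gamma$ is an endpoint of $I_{F_\Omega}$ is false in general. Lemma~\ref{Lemma: L_gamma is a Lelek fan} asserts it and the paper's proof relies on it, but the lemma's proof only treats the Lelek-fan part, so citing it does not help.

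Your justification breaks at ``the bound $\alpha(\gamma)$ or $\beta(\gamma)$ corresponds, through the middle block, to some coordinate of $A_\gamma$ equal to $1$''. That holds only when the bound is attained at a junction coordinate: $e_N=\beta(\gamma)$ iff $e_{-N}=\alpha(\gamma)$ iff some $e_j$ with $|j|\le N$ equals $1$. A forward coordinate $e_j$, $j>N$, equal to $\beta(\gamma)<1$ says nothing about the middle block. Here is a counterexample.
\begin{itemize}
\item Let $\omega_1=\frac12$, $\omega_2=3$, $N=1$, and choose $\gamma$ so that $x_0=3x_{-1}$ and $x_1=\frac12x_0$. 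Then $A_\gamma=\{(t,3t,\frac32t)\colon 0\le t\le\frac13\}$, so $\alpha(\gamma)=\frac13$ and $\beta(\gamma)=\frac12$.
\item Let $p_{-1}=1$, $p_0=3$, $p_1=\frac32$, $p_2=\frac34$, $p_3=\frac94$, with $p_{j+1}=\frac12p_j$ for $j\ge3$ and $p_{j-1}=\frac13p_j$ for $j\le-1$.
\item Then $s\mathbf{p}\in L_\gamma$ iff $s\le\frac29$, the binding constraint being $sp_3\le\beta(\gamma)$. So $\mathbf{e}=\frac29\mathbf{p}$ is an endpoint of $L_\gamma$.
\item Yet $\sup_je_j=e_0=\frac23<1$, and $s\mathbf{p}\in I_{F_\Omega}$ for all $s\le\frac13$. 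So $\mathbf{e}$ is an interior point of a leg of $I_{F_\Omega}$, not an endpoint.
\end{itemize}

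The fix is to bypass $L_\gamma$ and build an endpoint of $I_{F_\Omega}$ directly.
\begin{itemize}
\item A point $\zz\ne\mathbf{v}$ of $I_{F_\Omega}$ with some coordinate equal to $1$ is the far end of its leg $\{s\zz\colon 0\le s\le1\}$, hence an endpoint.
\item The condition $(\omega_1,\omega_2)\in\mathcal{NC}$ means $\log\omega_1<0<\log\omega_2$ have irrational ratio. Hence $\{\omega_1^k\omega_2^l\colon k,l\ge0\}$ is dense in $(0,\infty)$.
\item Given $\xx\ne\mathbf{v}$ (so $x_N>0$) and $\eta>0$, choose $k,l$ with $1\le c:=x_N\omega_1^k\omega_2^l<1+\eta$, and put $t=1/c$.
\item Set $z_j=tx_j$ for $j\le N$. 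From $z_N$, apply $\omega_1$ $k$ times, then $\omega_2$ $l$ times, then $\omega_1$ forever.
\item Every coordinate lies in $[0,1]$, since the $\omega_2$-stretch increases to exactly $tc=1$. So $\zz$ is an endpoint of $I_{F_\Omega}$.
\item For $|j|\le N$ we have $|z_j-x_j|\le 1-t<\eta$, whence $D(\xx,\zz)\le\max\{\eta,2^{-(N+1)}\}$.
\item The top $\mathbf{v}$ is a limit of points $\xx\ne\mathbf{v}$, so it also lies in the closure of the set of endpoints.
\end{itemize}
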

\begin{proof}
	By Lemma~\ref{Lemma: I_F is a smooth fan}, $I_{F_\Omega}$ is a smooth fan, so we only need to show that the set of endpoints of $I_{F_\Omega}$ is dense in $I_{F_\Omega}$. Let $\mathbf{x}\in I_{F_\Omega}$, and let $\e>0$. Fix $N\in\N$ such that $2^N<\e$. Since $\mathbf{x}\in I_{F_\Omega}$, there exists a sequence $\mathbf{a}\in\{1,\ldots,M\}^\Z$ such that $x_{j+1}=\omega_{a(j)}x_{j}$ for every $j\in\Z$. Let $\gamma=(a(-N),\ldots,a(N-1))$. Then $L_\gamma$ is a Lelek fan by Lemma~\ref{Lemma: L_gamma is a Lelek fan}. Fix a point $\mathbf{y}\in L_\gamma$ such that $y_j=x_j$ for all $j\in\{-N,\ldots,N\}$. Then $D(\mathbf{x},\mathbf{y})\leq 1/2^{N+1}<\e/2$.
	
	Since $\mathbf{y}\in L_\gamma$, there exists an endpoint $\mathbf{e}\in L_\gamma$ such that $D(\mathbf{y},\mathbf{e})<\e/2$. Note that $\mathbf{e}$ is also an endpoint of $I_{F_\Omega}$, and we have
	\[
	D(\mathbf{x},\mathbf{e})<D(\mathbf{x},\mathbf{y})+D(\mathbf{y},\mathbf{e})<\frac{\e}{2}+\frac{\e}{2}=\e.
	\]
	Therefore, the set of endpoints of $I_{F_\Omega}$ is dense in $I_{F_\Omega}$, so $I_{F_\Omega}$ is a Lelek fan.
\end{proof}

\section{Examples}

 Finally, we move to the main goal of our paper: the investigation of the dynamical properties admitted by our LF-inducing systems. 
 
\subsection{Two line example}

In this subsection, we consider the relation $F_{\{1/2,3\}} \subseteq [0,1]^2$, which is defined by 

$$F_{\{1/2,3\}}= \left\{ (x,3x):x \in \left[0,\frac{1}{3}\right] \bigg\} \cup \bigg\{\left(x,\frac{x}{2}\right): x \in [0,1] \right\}. $$

\begin{remark} The choice of $\frac{1}{2}$ and $3$ is not important - any never connecting pair $(r,\rho)$ yields the same result, i.e., that $X_{F_{\{r,\rho\}}}^+$ and $X_{F_{\{r,\rho\}}}$ are Lelek fans.
\end{remark}

It was shown in \cite{banic2} that $(I_{F_{\{1/2,3\}}}^+,\sigma_{F_{\{1/2,3\}}}^+)$ is a transitive dynamical system. However, this system does not have topological mixing, shadowing, nor the specification property:

\begin{prop} The dynamical systems $(I_{F_{\{1/2,3\}}}^+,\sigma_{F_{\{1/2,3\}}}^+)$ and $(I_{F_{\{1/2,3\}}},\sigma_{F_{\{1/2,3\}}})$ are not topologically mixing.
	
\end{prop}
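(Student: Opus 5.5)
Since the contrapositive of Proposition~\ref{Proposition: Mixing implies growing images} applies to both systems simultaneously, it suffices to find one closed set $A\subseteq I$ with non-empty interior for which $d_H(F^n(A),I)\not\to 0$, where $F=F_{\{1/2,3\}}$. The natural candidate is a small interval near the top of the unit interval, for example $A=[\tfrac{9}{10},1]$. Under $F$, a point $x$ is sent either to $x/2$ or, when $x\le \tfrac13$, to $3x$. Hence every point of $F^n(A)$ has the form $3^k 2^{-m}a$ with $a\in A$, $k+m=n$, and with all intermediate values staying in $[0,1]$.

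The key step is to show that $F^n(A)$ misses a fixed open interval for infinitely many $n$, and in fact for all $n$ of one parity or in some residue class. Here I would use the multiplicative structure. Pass to logarithms, setting $u=-\log x\in[0,\infty)$. Then $F$ acts by $u\mapsto u+\log 2$ or $u\mapsto u-\log 3$, the second allowed only when $u\ge\log 3$. The set $A$ corresponds to the short interval $J=[0,\log(10/9)]$, and $F^n(A)$ corresponds to a union of translates $J+m\log2-k\log 3$ with $k+m=n$.

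For the counting, fix $n$ and consider the image point $3^k2^{-m}a$. Since $\log 3/\log 2$ is irrational, these translates are spread out, but only $n+1$ of them occur, one for each $k$. Each translate has length $\log(10/9)$. A point $x$ near $1$ (say $u$ small) belongs to $F^n(A)$ only if some $k$ with $k+m=n$ satisfies $|m\log 2-k\log 3|<\log(10/9)$. Using $m=n-k$, this requires $n\log 2\approx k\log 6$. For all $n$ such that $n\log2/\log6$ lies at distance greater than a fixed $\eta>0$ from every integer, there is no $k$ making the translate meet $[0,\log(10/9)]$, provided $\log(10/9)$ is chosen small relative to $\eta\log 6$. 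By irrationality (or even simple periodicity mod $1$ of $n\log 2/\log 6$), infinitely many such $n$ exist. For those $n$, $F^n(A)\cap[\tfrac{9}{10},1]=\emptyset$ after shrinking $A$ appropriately, so $d_H(F^n(A),I)$ stays bounded below by a positive constant. This gives non-mixing.

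The main obstacle is the bookkeeping with the admissibility constraint, namely that $3x$ is allowed only when $x\le\tfrac13$. That constraint only removes points, however, so it only helps, and the upper-bound containment argument above survives. The remaining step is choosing the size of $A$ versus $\eta$ consistently. To keep it clean I would take $\eta=\tfrac14$ and $A=[e^{-c},1]$ with $c<\tfrac14\log 6$ minus the target gap.
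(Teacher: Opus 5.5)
Your proof is correct, but it locates the gap differently from the paper.

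Both arguments rest on the same containment $F^n(A)\subseteq\bigcup_k 6^k2^{-n}A$, with the admissibility constraint only removing points. In your logarithmic coordinates these are translates of $J$ spaced $\log 6$ apart.

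\textbf{The paper's route.} It takes $A=[5/6,1]$ and, for \emph{every} $M$, finds a moving gap $(\frac16\alpha(M),\frac56\alpha(M))$ below the largest ratio $\alpha(M)<1$. It uses only the maximality of $\alpha(M)$ to get $\alpha(M)\ge 1/6$. No number theory is needed, and it gives $\liminf_M d_H(F^M(A),I)>0$.

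\textbf{Your route.} You fix the window $[9/10,1]$ and show it is missed whenever the fractional part of $n\log 2/\log 6$ lies in $(1/4,3/4)$. This happens for infinitely many $n$ because $\log 2/\log 6$ is irrational ($2^a=6^b$ forces $a=b=0$). Your remark about ``periodicity mod $1$'' is not the right justification, since that sequence is not periodic; density of the irrational rotation is what you actually use. A subsequence with $d_H(F^n(A),I)\ge 1/10$ already contradicts the limit in Proposition~\ref{Proposition: Mixing implies growing images}, so both systems fail to be mixing.

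Two small remarks:
\begin{enumerate}
\item With $A=[9/10,1]$ and $\eta=1/4$ you already have $\log(10/9)<\frac14\log 6$, so no shrinking of $A$ is needed.
\item Because your gap sits at a fixed place, your argument gives the negation of mixing directly, so you could bypass the Hausdorff-metric proposition altogether. Take $U=V$ to be the open set of points whose base coordinate lies in $(9/10,1]$; then $\sigma^n(U)\cap V=\emptyset$ for infinitely many $n$.
\end{enumerate}
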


\begin{proof}
	Given a positive integer $M$, we may define the set
	\[
	\mathcal{A}(M)=\left\{\frac{3^m}{2^n}\colon n,m\geq 0,~n+m=M\right\}.
	\]
	Observe that for any interval $[c,d]\se [0,1]$, we have
	\[
	F_{\{1/2,3\}}^M([a,b])\se\bigcup_{\alpha\in\mathcal{A}(M)}[\alpha a,\alpha b].
	\]
	(Here we don't have equality because $\alpha a$ and $\alpha b$ may be greater than 1 for some values of $\alpha\in\mathcal{A}(M)$ whereas we have $F_{\{1/2,3\}}^M([a,b])\se [0,1]$.)
	
	For each positive integer $M$, define
	\[
	\alpha(M)=\max\left\{\alpha\in\mathcal{A}(M)\colon \alpha< 1\right\}\\
	\]
	Observe that there exist non-negative integers $p$ and $q$ such that $p+q=M$ and
	\[
	\alpha(M)=\frac{3^p}{2^q}.
	\]
	We may thus write the elements of $\mathcal{A}(M)$ in decreasing order as
	\begin{align*}
		\mathcal{A}(M)&=\left\{\frac{3^p}{2^q},\frac{3^{p-1}}{2^{q+1}},\ldots,\frac{3^{0}}{2^{q+p}}\right\}\\
		&=\left\{\alpha(M),\frac{1}{6}\alpha(M),\ldots,\frac{1}{6^p}\alpha(M)\right\}.
	\end{align*}
	
	Consider the interval $[\frac{5}{6},1]\se[0,1]$. Given a positive integer $M$, we have that
	\begin{align*}
		F_{\{1/2,3\}}^M\left(\left[\frac{5}{6},1\right]\right)&=\left(\bigcup_{\alpha\in\mathcal{A}(M)}\left[\frac{5}{6}\alpha,\alpha\right]\right)\cap[0,1]\\
		&=\left[\frac{5}{6}\alpha(M),\alpha(M)\right]\cup\left[\frac{5}{6}\cdot\frac{1}{6}\alpha(M),\frac{1}{6}\alpha(M)\right]\cup\cdots\cup\left[\frac{5}{6}\cdot\frac{1}{6^p}\alpha(M),\frac{1}{6^p}\alpha(M)\right].
	\end{align*}
	In particular, we have that 
	\[
	[0,1]\setminus F_{\{1/2,3\}}^M\left(\left[\frac{5}{6},1\right]\right)\supseteq\left(\frac{1}{6}\alpha(M),\frac{5}{6}\alpha(M)\right),
	\]
	so it follows that
	\begin{align*}
		d_H\left([0,1],f^M\left[\frac{5}{6},1\right]\right)&\geq\diam\left(\frac{1}{6}\alpha(M),\frac{5}{6}\alpha(M)\right)\\
		&=\left(\frac{5}{6}-\frac{1}{6}\right)\alpha(M)\\
		&=\frac{2}{3}\alpha(M).
	\end{align*}
	
	Finally, we observe that for all $M$, $\alpha(M)\geq 1/6$. This is because otherwise, we would have $6\alpha(M)<1$ which would contradict the maximality of $\alpha(M)$. Thus we have
	\[
	d_H\left([0,1],f^M\left[\frac{5}{6},1\right]\right)\geq\frac{2}{3}\alpha(M)\geq\frac{1}{9}.
	\]
	Note that this is independent of our choice of $M$, so $d_H(F_{\{1/2,3\}}^M([5/6,1]),[0,1])$ does not converge to 0. Therefore, by Proposition~\ref{Proposition: Mixing implies growing images}, $(I_{F_{\{1/2,3\}}},\sigma_{F_{\{1/2,3\}}})$ is not topologically mixing. And now, by \cite[Theorem 3.9]{BE2} $(I_{F_{\{1/2,3\}}}^+,\sigma_{F_{\{1/2,3\}}}^+)$ is not topologically mixing.
	
\end{proof}

\begin{theorem} \label{no mixing} The dynamical system $(I_{F_{\{1/2,3\}}}^+,\sigma_{F_{\{1/2,3\}}}^+)$ does not have the shadowing property.
	
\end{theorem}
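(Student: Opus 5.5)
We argue by contradiction. The idea is to build a periodic $\delta$-pseudo-orbit whose first coordinates stay in a fixed interval $[c,1]$ with $c>0$, and then show that any true orbit tracking it must pick up a multiplicative drift $\lambda\neq 1$ every period, which forces it to $0$ or above $1$. Write $F=F_{\{1/2,3\}}$.

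\emph{Step 1: the multiplicative structure of true orbits.} For $\yy\in I_F^+$ and each $k$ we have $y_{k+1}/y_k\in\{1/2,3\}$ whenever $y_k\neq 0$. Hence $y_{k+p}=\frac{3^m}{2^n}y_k$, where $m$ and $n$ count the two ratios used on $[k,k+p)$. Since $3^m=2^n$ only for $m=n=0$, a nonempty block of ratios never has product exactly $1$. On the other hand, $\log 3/\log 2$ is irrational, so for any $\eta>0$ there are $m,n\ge 1$ with $|3^m/2^n-1|<\eta$.

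\emph{Step 2: the pseudo-orbit.} Given $\delta>0$, choose $N$ with $2^{-N}<\delta/2$. Then choose $\eta$ small and $m,n$ with $\lambda=3^m/2^n\in(1-\eta,1+\eta)$. Arrange the $m$ factors $3$ and $n$ factors $1/2$ into a word $w$ of length $p=m+n$ so that, starting from $t_1=1/3$, the partial products keep the values $t_{j+1}=\omega_{w(j)}t_j$ inside $[1/12,1]$. This is possible by greedy ordering: multiply by $3$ when $t\le 1/3$, otherwise halve. After $p$ steps the value is $\lambda/3$, and we reset it to $1/3$. Repeating $w$ periodically with these resets gives a real sequence $(t_k)$ that obeys $F$ exactly except for a jump of size at most $\eta$ every $p$ steps.

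Let $\zz^k\in I_F^+$ be the point whose first $N+1$ coordinates are $t_k,\dots,t_{k+N}$, each computed by applying the ratios exactly starting from $t_k$, and whose later coordinates follow ratio $1/2$ forever (always admissible). Then $\sigma_F^+(\zz^k)$ and $\zz^{k+1}$ agree in the first $N$ coordinates up to an error of order $3^N\eta$, caused by at most one reset. Their remaining coordinates carry weight at most $2^{-N}$. Choosing $\eta$ after $N$ therefore makes $(\zz^k)$ a $\delta$-pseudo-orbit.

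\emph{Step 3: shadowing forces the ratios.} Fix $\e$ much smaller than $c=1/12$; in particular $\e<c/24$ suffices. Suppose $\yy$ $\e$-shadows $(\zz^k)$. Then $|y_k-t_k|\le 2D(\sigma^{k-1}\yy,\zz^k)<2\e$ for all $k$, so $y_k\in[c-2\e,1]$ is bounded away from $0$. The ratio $y_{k+1}/y_k\in\{1/2,3\}$ is within a small relative error of $t_{k+1}/t_k$ (or of $t_{k+1}/t_k$ times the reset factor $1/\lambda$). Since $1/2$ and $3$ are far apart, this forces $y$ to use exactly the word $w$ on each period. Hence $y_{1+jp}=\lambda^j y_1$ for all $j$. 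Because $\lambda\neq1$, this tends to $0$ or exceeds $1$, which contradicts $y_{1+jp}\in[c-2\e,1]$. So no $\delta$ works for this $\e$, and the shadowing property fails.

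The main obstacle is the bookkeeping in Step 2. The same word must be both admissible (values in $[0,1]$, with factor $3$ used only at arguments $\le 1/3$) and bounded away from $0$. We also need the first-$N$-coordinate agreement to hold despite the reset, which is why $\eta$ is chosen after $N$. The discrimination in Step 3 requires only that $\e$ be small relative to $c$ and that $\eta$ be small. Note that $\e$ is fixed first, $\delta$ is arbitrary, and $N$ and $\eta$ depend on $\delta$.
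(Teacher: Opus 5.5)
Your argument is correct in substance (after one small repair, below) but takes a genuinely different route from the paper. The paper argues indirectly. Assuming shadowing, it combines the transitivity of $\sigma^+_{F_{\{1/2,3\}}}$ (taken from \cite{banic2}) with the fixed point $\overline{0}$ to build, for every large $n$, a pseudo-orbit that starts near $\aa$, lingers near $\overline{0}$ for an adjustable number of steps (the tail of $\cc$ is replaced by repeated halving), and then jumps onto the orbit of $\dd$, which enters a neighbourhood of $\bb$. Shadowing then gives $(\sigma^+_{F_{\{1/2,3\}}})^n(U)\cap V\neq\emptyset$ for all large $n$, i.e.\ mixing, contradicting the preceding non-mixing proposition (proved via the Hausdorff bound on $F^M_{\{1/2,3\}}([5/6,1])$).

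You instead exhibit an explicit obstruction: a periodic pseudo-orbit bounded away from $0$, with multiplicative defect $\lambda=3^m/2^n\neq 1$ once per period. Against it you use the rigidity that a true orbit bounded away from $0$ has all ratios in $\{1/2,3\}$, so it must copy the word $w$ and drift like $\lambda^j$. Your version is self-contained, since it needs neither transitivity nor the non-mixing result. It also gives an explicit $\e$ and isolates the arithmetic cause, $3^m\neq 2^n$. The paper's version is shorter given those two results, and it is an instance of the general principle that a transitive map with shadowing and a fixed point is mixing. Yours is closer in spirit to the paper's later arguments for $F_{\{1/2,3,1\}}$ and for relations with $1\in\mathcal{A}(M)$, with the roles reversed: there the pseudo-orbit drifts while true orbits are frozen; here the pseudo-orbit is frozen while true orbits are forced to drift.

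The repair: you must take $\lambda\in(1-\eta,1)$, which density allows. The greedy word begins with the factor $3$ at $t_1=1/3$. So if $\lambda>1$, a point $\zz^k$ whose window straddles a reset computes the coordinate $\lambda/3>1/3$ and then multiplies it by $3$, producing $\lambda>1$; that $\zz^k$ is not in $I_F^+$. With $\lambda<1$, every computed coordinate is $\lambda^r$ times a greedy value. Such coordinates lie in $[0,1]$, and the factor $3$ is still applied only at arguments $\le 1/3$.

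Two claims also deserve a line each. First, greedy values stay in $[1/6,1]$. If greedy used $m+d$ threes in $p$ steps, its endpoint would be $\lambda 6^d/3$, which for $\eta<1/2$ forces $d=0$. So greedy really uses exactly $m$ threes and ends at $\lambda/3$. Second, $\sigma^+_F(\zz^k)$ and $\zz^{k+1}$ follow the same exact word from position $k+1$ on, so their first $N$ coordinates differ by the constant factor $1$ or $\lambda$. The error there is therefore at most $\eta$, and you need neither the $3^N\eta$ estimate nor the assumption of at most one reset per window.
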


\begin{proof}
	Suppose $(I^+_{F_{\{1/2,3\}}},\sigma^+_{F_{\{1/2,3\}}})$ has the shadowing property. We show this would imply $(I^+_{F_{\{1/2,3\}}},\sigma^+_{F_{\{1/2,3\}}})$ is topologically mixing. Let $U,V\se I^+_{F_{\{1/2,3\}}}$ be non-empty open sets. Fix $\aa\in U$ and $\bb\in V$, and choose $\e>0$ so that $B(\aa,\e/2)\se U$ and $B(\bb,\e/2)\se V$. Since $(I^+_{F_{\{1/2,3\}}},\sigma_{F_{\{1/2,3\}}}^+)$ has the shadowing property, we choose $\delta>0$ so that $\delta<\e/2$ and every $\delta$-pseudo-orbit is $\e/2$-shadowed by a true orbit. Fix $n_0\in\N$ so that $1/2^{n_0}<\delta/2$, and define
	\[
	W=\left\{\xx\in I^+_{F_{\{1/2,3\}}}\colon |x_j-0|<\delta\text{ for all }1\leq j\leq n_0\right\}.
	\]
	Note that $W$ is open and $W\se B(\overline{0},\delta/2)$.
	
	Since $(I^+_{F_{\{1/2,3\}}},\sigma_{F_{\{1/2,3\}}}^+)$ is topologically transitive, there exists $\cc=(c_1,c_2,\ldots)\in B(\aa,\delta/2)$ and $k_0\in\N$ such that $(\sigma_{F_{\{1/2,3\}}}^+)^{k_0}(\cc)\in W$, and there exists $\dd=(d_1,d_2,\ldots)\in B(\overline{0},\delta/2)$ and $l_0\in\N$ such that $(\sigma_{F_{\{1/2,3\}}}^+)^{l_0}(\dd)\in B(\bb,\e/2)$.
	
	Let $n\geq k_0+l_0+1$, and set $m=n-(k_0+l_0+1)+k_0=n-l_0-1$. We construct a sequence $(\xx_j)_{j=1}^\infty$ as follows: Let
	\begin{align*}
		\xx_0&=\left(c_1,c_2,\ldots,c_{k_0+n_0},\frac{1}{2}c_{k_0+n_0},\frac{1}{2^2}c_{k_0+n_0},\frac{1}{2^3}c_{k_0+n_0},\ldots\right),\\
		\xx_j&=(\sigma_{F_{\{1/2,3\}}}^+)^j(\xx_0)\text{ for }1\leq j\leq m,\\
		\xx_j&=(\sigma_{F_{\{1/2,3\}}}^+)^{j-(m+1)}(\dd)\text{ for }j\geq m+1.
	\end{align*}
	First, we observe that $(\xx_j)_{j=1}^\infty$ is a $\delta$-pseudo-orbit. To see this, note from the construction of $\xx_0$ that since $(\sigma_{F_{\{1/2,3\}}}^+)^{k_0}(\cc)\in W$, we must have as well that $(\sigma_{F_{\{1/2,3\}}}^+)^j(\xx_0)\in W$ for all $j\geq k_0$. In particular $\sigma(\xx_m)\in W\se B(\overline{0},\delta/2)$ and $\xx_{m+1}=\dd\in B(\overline{0},\delta/2)$, so by the triangle inequality, $D(\sigma(\xx_m),\xx_{m+1})<\delta$. For all values of $j\in\N\setminus\{m\}$, $\xx_{j+1}=\sigma(\xx_j)$, so this is a $\delta$-pseudo-orbit.
	
	Next, observe that $\xx_0\in B(\aa,\delta/2)\se B(\aa,\e/2)$, and likewise $\xx_n=(\sigma_{F_{\{1/2,3\}}}^+)^{l_0}(\dd)\in B(\bb,\e/2)$. From the shadowing property, there is a point $\yy\in I^+_{F_{\{1/2,3\}}}$ such that $D((\sigma_{F_{\{1/2,3\}}}^+)^j(\yy),\xx_j)<\e/2$ for all $j\geq 0$. In particular, using the triangle inequality once again, we obtain $\yy\in B(\aa,\e)\se U$, and $(\sigma_{F_{\{1/2,3\}}}^+)^n(\yy)\in B(\bb,\e)\se V$. Therefore $(\sigma_{F_{\{1/2,3\}}}^+)^n(U)\cap V\neq\emptyset$. Since this holds for all $n\geq k_0+l_0+1$, we have that $(I^+_{F_{\{1/2,3\}}},\sigma_{F_{\{1/2,3\}}}^+)$ is topologically mixing.
	
	This is a contradiction to Proposition~\ref{no mixing}. Therefore, we must have that $(I^+_{F_{\{1/2,3\}}},\sigma_{F_{\{1/2,3\}}}^+)$ is not shadowing.
\end{proof}
 
 \begin{corollary} The dynamical system $(I_{F_{\{1/2,3\}}}^+,\sigma_{F_{\{1/2,3\}}}^+)$ does not have the specification property.	
 \end{corollary}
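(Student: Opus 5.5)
The plan is to derive the corollary from the standard fact that the specification property implies topological mixing, combined with the proposition above showing that $(I_{F_{\{1/2,3\}}}^+,\sigma_{F_{\{1/2,3\}}}^+)$ is not topologically mixing. Shadowing is not the relevant tool here, since specification does not imply shadowing; the useful route is specification $\Rightarrow$ mixing. Suppose, for contradiction, that $(I^+_{F_{\{1/2,3\}}},\sigma^+_{F_{\{1/2,3\}}})$ has the specification property. Write $\sigma=\sigma^+_{F_{\{1/2,3\}}}$.

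Let $U,V$ be non-empty open subsets of $I^+_{F_{\{1/2,3\}}}$. Fix $\aa\in U$ and $\bb\in V$, and choose $\e>0$ with $B(\aa,\e)\se U$ and $B(\bb,\e)\se V$. Let $N$ be the integer given by Definition~\ref{def1} for this $\e$. Since $\sigma$ is surjective on $I^+_{F_{\{1/2,3\}}}$, for every $n\geq N$ we can pick $\xx_2$ with $\sigma^n(\xx_2)=\bb$. For instance, take $\xx_2=(\bb(1)\cdot 2^n,\ldots)$ when that lies in $[0,1]$, and otherwise use the $3$-branch backwards. More simply, each point $t\in[0,1]$ has the preimage $2t$ if $t\le 1/2$, and the preimage $t/3$ always exists; so $F_{\{1/2,3\}}^{-1}(t)\neq\emptyset$ for all $t$, and $\sigma$ is onto.

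Now consider the $N$-spaced specification
\[
\mathcal S=\left(\sigma^{[0,0]}(\aa),\sigma^{[n,n]}(\xx_2)\right),
\]
which is $N$-spaced because $n-0\ge N$. The specification property gives $\yy$ with $D(\yy,\aa)\le\e$ and $D(\sigma^n(\yy),\bb)\le\e$. To avoid the boundary issue, run the argument with $\e/2$ in place of $\e$. Then $\yy\in U$ and $\sigma^n(\yy)\in V$, so $\sigma^n(U)\cap V\neq\emptyset$ for all $n\ge N$. Hence the system is topologically mixing, which contradicts the proposition above. Therefore the system does not have the specification property.

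The only point needing care is the surjectivity of $\sigma$, which is needed to realize $\bb$ at time $n$. It follows from the observation that every $t\in[0,1]$ has the $F_{\{1/2,3\}}$-preimage $t/3$: given $\bb$, the point $(\bb(1)/3^{n},\ldots,\bb(1)/3,\bb(1),\bb(2),\ldots)$ lies in $I^+_{F_{\{1/2,3\}}}$ and is mapped to $\bb$ by $\sigma^n$. Alternatively, I would simply cite the standard result that specification implies mixing for surjective maps.
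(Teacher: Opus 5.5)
Your proof is correct and matches the paper's argument: the paper notes that $\sigma^+_{F_{\{1/2,3\}}}$ is onto, then combines the non-mixing proposition with the well-known fact, cited from Denker--Grillenberger--Sigmund, that a surjective map with the specification property is topologically mixing. You prove that implication inline instead of citing it, and your surjectivity argument (every $t\in[0,1]$ has the preimage $t/3$ on the $3$-branch) is exactly what is needed to realize $\mathbf{b}$ at time $n$.
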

 
 \begin{proof}
 Since $\sigma_{F_{\{1/2,3\}}}^+:I_{F_{\{1/2,3\}}}^+ \to I_{F_{\{1/2,3\}}}^+$ is an onto map, this claim is a direct consequence of Theorem \ref{no mixing} and a well known result in \cite{Denker}. 	
 \end{proof}

\subsection{Three line example}

\begin{definition} Define a relation $F_{\{1/2,3,1\}} \subseteq [0,1]^2$ by $$F_{\{1/2,3,1\}}= \left\{ (x,3x):x \in \left[0,\frac{1}{3}\right] \right\} \cup \left\{\left(x,\frac{x}{2}\right): x \in [0,1] \right\} \cup \bigg\{(x,x): x \in [0,1] \bigg\}. $$

\end{definition}

\begin{remark}
	
The particular choice of $(\frac{1}{2},3)$ is not important - any $(r,\rho) \in \mathcal{NC}$ will do. The relation $F_{\{1/2,3,1\}} $ is pictured in Figure \ref{Figure: Three Lines}.

\end{remark} 

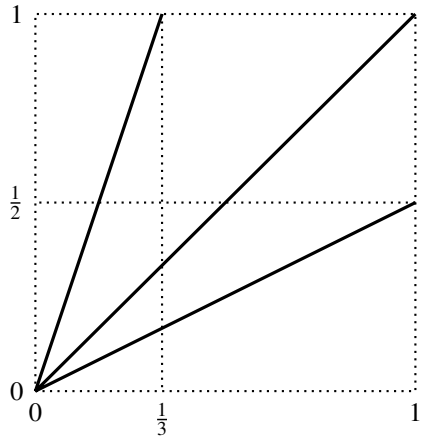
\begin{figure}[h]
	\begin{center}
		\begin{tikzpicture}[scale=5]
			\draw[thick,dotted] (0,0)node[left]{\footnotesize0} -- (0,1)node[left]{\footnotesize1} -- (1,1) -- (1,0)node[below]{\footnotesize1} -- (0,0)node[below]{\footnotesize0};
			\draw[thick, dotted] (1/3,1) -- (1/3,0) node[below]{\footnotesize$\frac{1}{3}$}
			(1,1/2) -- (0,1/2) node[left]{\footnotesize$\frac{1}{2}$};
			\draw[very thick] 
			(0,0) -- (1,1) 
			(0,0) -- (1,1/2)
			(0,0) -- (1/3,1);
		\end{tikzpicture}
		\caption{Relation $F_{\{1/2,3,1\}}$}\label{Figure: Three Lines}
	\end{center}
\end{figure}

It was shown in \cite{BE2} that $(I_{F_{\{1/2,3,1\}}}^+,\sigma_{F_{\{1/2,3,1\}}}^+)$ is a  topologically mixing dynamical system. We show below that  it has the specification property, but does not have the shadowing property. The lack of shadowing in this closed relation is due to the fact that it contains the identity and the identity is ``separated'' from the rest of the relation.

Let $[a,b]$ denote a closed interval in $[0,1]=I$, and for convenience, if $c \ge 0$, $c[a,b]=[ca,cb]$.

Let $\mathcal{A}$ be the collection of all finite subsets $\{ a_1, a_2, a_3, \dots , a_k \} $ of the positive real numbers such that $k \ge 3 $ , $a_1=3$, $a_2=1$, $a_3 =\frac{1}{2}$, and such that $\frac {1}{3} \leq a_i \leq 3$ for each $i\in \{1,2,3, \ldots ,k \} $. Note that every set in $\mathcal{A}$ is an LF-inducing set. 

For each  $A \in \mathcal{A}$, let $F_A = \{ (x,y) \in I ^2 : y= a x  \text{ for some } a \in A \}$. Let $\mathcal{F}= \{ F_A : A\in \mathcal{A} \}$.

\begin{observation} If $G \in \mathcal{F}$ and $B \subset [0,1]$,  then $B \subset G(B) .$
\end{observation}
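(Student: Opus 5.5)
The observation follows directly from the fact that every $G\in\mathcal F$ contains the diagonal of $I^2$. I will unwind the definitions and check this.

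Fix $G\in\mathcal F$. By the definition of $\mathcal F$ there is a set $A=\{a_1,a_2,\ldots,a_k\}\in\mathcal A$ with $G=F_A$. Every set in $\mathcal A$ has $a_2=1$, so $1\in A$. Hence $F_A$ contains $\{(x,y)\in I^2\colon y=1\cdot x\}$, which is the diagonal $\{(t,t)\colon t\in[0,1]\}$.

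Next I recall that, for a relation $G$ and a set $B$, the image is $G(B)=\{y\colon (x,y)\in G \text{ for some } x\in B\}$; this is the convention already used for $F^n(A)$ in Proposition~\ref{Proposition: Mixing implies growing images}. Let $b\in B\subset[0,1]$. Then $(b,b)\in I^2$ and $b=1\cdot b$, so $(b,b)\in F_A=G$. By the definition of the image, $b\in G(B)$. Since $b\in B$ was arbitrary, $B\subseteq G(B)$.

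Nothing else is needed. The constraint $\frac13\le a_i\le 3$ plays no role, and there is no domain issue, because the identity branch $y=x$ is defined on all of $[0,1]$ and never leaves $I^2$.
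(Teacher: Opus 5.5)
Your proof is correct and matches the reasoning the paper leaves implicit; the paper states this observation without proof. Since $a_2=1\in A$, every $G=F_A\in\mathcal{F}$ contains the diagonal $\{(t,t)\colon t\in[0,1]\}$, so each $b\in B$ lies in $G(B)$ via $(b,b)\in G$.
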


\begin{observation} \label{chrisvanobs1}
	If $G\in \mathcal{F}$ and $(x,y)\in G$ then $\frac{1}{3}x \le y \le 3x $
\end{observation}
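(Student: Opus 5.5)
The statement to prove is Observation~\ref{chrisvanobs1}: if $G\in\mathcal{F}$ and $(x,y)\in G$, then $\frac{1}{3}x\le y\le 3x$. The plan is to unwind the definition of $\mathcal{F}$ and use the bounds built into the collection $\mathcal{A}$. No earlier result is needed.

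Fix $G\in\mathcal{F}$. By definition there is some $A=\{a_1,\ldots,a_k\}\in\mathcal{A}$ with $G=F_A$. Let $(x,y)\in G$. Then $x,y\in I$, and $y=ax$ for some $a\in A$.

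The defining condition on $\mathcal{A}$ gives $\frac13\le a\le 3$. Since $x\ge 0$ (because $x\in I=[0,1]$), we may multiply this inequality by $x$ without reversing it. This yields $\frac13 x\le ax\le 3x$, that is, $\frac13 x\le y\le 3x$.

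There is no real obstacle. The only point to check is the nonnegativity of $x$, which is what justifies multiplying the inequality through by $x$. Note also that the extra constraint $y\in I$ is not needed for this bound: it only restricts which $x$ actually occur, and does not affect the inequality.
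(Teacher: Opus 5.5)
Your proof is correct and is the same immediate argument the paper relies on; the paper states this as an observation without writing out a proof. Writing $G=F_A$ with $A\in\mathcal{A}$ gives $y=ax$ for some $a\in A$ with $\frac13\le a\le 3$, and multiplying by $x\ge 0$ gives the bound.
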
 

A map $f:X\to  X$ is \emph{continuum-wise exact} if for every $\delta>0$ there exists $N=N_{\delta}$ such that if $K$ is a subcontinuum of $X$ and $\mbox{diam}(K)\geq \delta$, then $f^N(K)=X$.

It is not hard to show that continuum-wise exact maps have the specification property, and while $F_{\{1/2,3,1\}}$ is not continuum-wise exact, it almost is--except for what happens at $0$.   The proof to show that $F_{\{1/2,3,1\}}$ has the specification property is motivated by the proof that continuum-wise exact maps have the specification property with additional care taken when points are close to $0$. (For the interested reader, a map of the pseudo-arc with a continuum-wise exact map can be found in \cite{chris}.)

Let $y_0,y_1, \dots, y_p$ be some finite trajectory of $y_0$ under $F$. Then we can define 

$$y_i=3^m (1)^k \frac{1}{2^n} y_0 = \frac{3^m}{2^n} y_0 := \frac{3^{m_i}}{2^{n_i}} y_0 ,$$ where $m+k+n=i$, or equivalently, $m+n \le i$ ($m_i+n_i \le i$).

Let $y_0,y_1, \dots, y_p$ be some finite trajectory of $y_0$ under $F$. Then we can define 

$$y_i=3^m (1)^k \frac{1}{2^n} y_0 = \frac{3^m}{2^n} y_0 := \frac{3^{m_i}}{2^{n_i}} y_0 ,$$ where $m+k+n=i$, or equivalently, $m+n \le i$ ($m_i+n_i \le i$).

\begin{lemma}\label{altlemma}
	Assume  $\delta $ and $\gamma$ are elements of $(0,1)$.  Then there exists $N_{\delta,\gamma} \in \mathbb{N} $ such that if $[a,b] \subset (0,1]$ and $b-a > \delta$, then $[\gamma,1] \subset F_{\{1/2,3,1\}}^n([a,b]) $ for each $n \geq N_{\delta,\gamma} $.
\end{lemma}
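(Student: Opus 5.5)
The plan is to exploit three features of $F=F_{\{1/2,3,1\}}$. First, monotonicity in $n$: since $F$ contains the diagonal, $J\subseteq F(J)$ for every $J\subseteq I$, so $F^n([a,b])\subseteq F^{n+1}([a,b])$. It therefore suffices to produce a single $N_{\delta,\gamma}$ with $[\gamma,1]\subseteq F^{N_{\delta,\gamma}}([a,b])$, uniformly over all admissible $[a,b]$. Second, scaling: for a word in the three slopes, the image of an interval is a scaled interval $c[a,b]$ with $c=3^m/2^k$. Third, density: since $\log 3/\log 2$ is irrational, the numbers $m\log 3-k\log 2$ ($m,k\ge 0$) are dense in $\mathbb{R}$.

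\emph{Step 1 (reduction to a fixed ratio).} Since $b-a>\delta$ and $b\le 1$, we have $b>\delta$ and $a<b-\delta\le b(1-\delta)$. Hence $[b(1-\delta),b]\subseteq[a,b]$ with $b\in(\delta,1]$. Pass to logarithms and put $\eta=-\log(1-\delta)>0$. The interval $c[b(1-\delta),b]$ corresponds to the log-interval $[e+\log b-\eta,\ e+\log b]$, where $e=\log c$.

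\emph{Step 2 (a finite uniform set of scalings).} Let $E_0=\{m\log3-k\log2: m,k\ge 0\}$. By density, choose a finite set $E\subseteq E_0$ that is $\eta/2$-dense in the compact interval $[\log\gamma-\eta,\ -\log\delta]$. For any $b\in(\delta,1]$, the set $E+\log b$ is then $\eta/2$-dense in $[\log\gamma-\eta+\log b,\ -\log\delta+\log b]$, and this interval contains $[\log\gamma-\eta,0]$ because $\log b\in(\log\delta,0]$. Now let $t\in[\log\gamma,0]$. Density gives some $e\in E$ with $e+\log b\in[t,\ t+\eta/2]\cap(-\infty,0]$; at the top end $t$ near $0$ we instead pick $e+\log b\in[-\eta/2,0]$. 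In either case the point $t$ lies in $[e+\log b-\eta,\ e+\log b]$ and $e+\log b\le 0$. Consequently
\[
[\gamma,1]\subseteq\bigcup\{c[b(1-\delta),b]: \log c\in E,\ cb\le 1\}.
\]

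\emph{Step 3 (realizing the scalings in exactly $N$ steps).} Set $N_{\delta,\gamma}=\max\{m+k: m\log3-k\log2\in E\}$. Take $c=3^m/2^k$ with $\log c\in E$ and $cb\le1$, and a point $x\in[b(1-\delta),b]$. Build an $F$-trajectory of length $N$ from $x$ to $cx$ in three stages: apply the slope $1/2$ exactly $k$ times, then the slope $3$ exactly $m$ times, then the identity $N-m-k$ times. During the halving stage the values decrease, so they stay in $I$. During the tripling stage they increase monotonically to $cx\le cb\le 1$, so each intermediate value before a tripling is at most $1/3$, as the graph $\{(x,3x):x\in[0,1/3]\}$ requires. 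Hence $c[b(1-\delta),b]\subseteq F^N([a,b])$. Combined with Step 2 this gives $[\gamma,1]\subseteq F^N([a,b])$, and Step 1's monotonicity extends it to all $n\ge N$.

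The main obstacle is Step 2. The choice of $E$ must be independent of $b$, and the log-covering must be handled carefully near the top endpoint $0$, where scalings pushing $cb$ above $1$ are unusable. Both issues are handled by the compactness of the parameter range $\log b\in[\log\delta,0]$ together with the $\eta/2$ margin. Step 3 is routine once the order of operations (halve first, then triple, then pad with the identity) is fixed.
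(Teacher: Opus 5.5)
There is a genuine gap in Step 2, exactly at the top endpoint you flagged as the main obstacle. With the restriction $cb\le 1$, the inclusion $[\gamma,1]\subseteq\bigcup\{c[b(1-\delta),b]:\log c\in E,\ cb\le 1\}$ is false in general. The point $1$ lies in some $c[b(1-\delta),b]$ with $cb\le 1$ only if $cb=1$. Since $E$ is finite, this happens for only finitely many $b\in(\delta,1]$. For every other $b$, all right endpoints $cb$ are strictly less than $1$, so this finite union of closed intervals misses $1$ and a left neighborhood of it.

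Your fallback choice $e+\log b\in[-\eta/2,0]$ does not rescue this. The conclusion $t\in[e+\log b-\eta,\,e+\log b]$ requires $t\le e+\log b$, which fails for $t\in(e+\log b,0]$, in particular for $t=0$. The problem is the premise that scalings with $cb>1$ are unusable. Under that premise the point $1$ is never reached, even though $1\in F^N([a,b])$ is true.

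The fix is local. Your Step 3 feasibility check only needs $cx\le 1$ for the particular point $x$: before each tripling the current value is at most $3^{m-1}x/2^k=cx/3$. So for every $c=3^m/2^k$ with $\log c\in E$ you get $c[b(1-\delta),b]\cap[0,1]\subseteq F^N([a,b])$, with no condition on $cb$. Then for $t\in[\log\gamma,0]$ you only need some $e\in E$ with $t\le e+\log b\le t+\eta$, i.e.\ $e\in[s,s+\eta]$ where $s=t-\log b\in[\log\gamma,-\log\delta)$. So choose the finite set $E\subseteq E_0$, by density and compactness, so that every interval $[s,s+\eta]$ with $s\in[\log\gamma,-\log\delta]$ contains a point of $E$. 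This is a one-sided condition, and the relevant range now extends up to $-\log\delta+\eta$. This truncation is precisely what the paper does: it takes $p,q$ with $\frac{3^p}{2^q}\cdot\frac{i_0}{k}<1<\frac{3^p}{2^q}\cdot\frac{i_0+1}{k}$ and keeps only $[a_\delta,1]\subseteq F^{p+q}([a,b])$.

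With this repair your argument is a valid proof, organized differently from the paper's. You cover $[\gamma,1]$ at a single time $N$ by a finite family of scaled copies of $[b(1-\delta),b]$, chosen uniformly in $b$ by compactness, and use the diagonal only for monotonicity in $n$. The paper instead pushes a grid interval $B_{i_0}\subseteq[a,b]$ past $1$ to get $[a_\delta,1]$. It then grows this downward through $[c_0\cdots c_j a_\delta,1]$, using the diagonal to retain what is already covered. Your version makes the uniformity over $[a,b]$ explicit. In the paper, $a_\delta$ and $N_\delta$ depend on $i_0$, so one must tacitly take a maximum over the finitely many possible $i_0$.
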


	\begin{proof}
		Let $k \in \mathbb{N}$ such that $\frac{1}{k} < \frac{\delta}{2}$ and let $B_i=[\frac{i}{k},\frac{i+1}{k}]$ for each $i \in \{0,1, \ldots,k-1 \}$.	There is $i_0 \in \{0,1, \ldots, k-1 \}$ such that $B_{i_0} \subset [a,b]$. Since $\{ \frac{3^p}{2^q}: p,q \in \mathbb{N}_0 \}$ is dense in $\mathbb{R}$, there are $p,q \in \mathbb{N}_0$ such that $\frac{3^p}{2^q} (\frac{i_0}{k}) <1< \frac{3^p}{2^q} (\frac{i_0+1}{k})$.
		\vphantom{}
		
		Let $a_{\delta}= \frac{3^p}{2^q} (\frac{i_0}{k})$ and $N_{\delta}=p+q$. Then if $n \ge N_{\delta}=p+q$, $[\frac{3^p}{2^q} (\frac{i_0}{k}),1] \subset F_{\{1/2,3,1\}}^{p+q}([a,b])$. Then $ [a_{\delta},1]=[\frac{3^p}{2^q} (\frac{i_0}{k}),1] \subset F_{\{1/2,3,1\}}^{p+q}([a,b]) =F_{\{1/2,3,1\}}^{N_{\delta}}([a,b]) \subset F_{\{1/2,3,1\}}^n([a,b])  $.
		
		There is $c_0=\frac{3^{m_0}}{2^{n_0}}$ such that $a_{\delta} < c_0 < \frac{a_{\delta}+1}{2}$. (Note that $\frac{a_{\delta}+1}{2}<1$.) Similarly, there is $c_1=\frac{3^{m_1}}{2^{n_1}}$ such that $c_0 a_{\delta} < c_1 < a_{\delta}$. Continuing inductively, suppose $c_0,c_1, \ldots, c_k$ have been found. Then there is $c_{k+1}=\frac{3^{m_{k+1}}}{2^{n_{k+1}}}$ such that $c_0c_1 \cdots c_k a_{\delta} < c_{k+1} < c_0 c_1 \cdots c_{k-1} a_{\delta}$.  For each positive integer $k$, let $N_k = n_k + m_k$. Note that 
		
		\begin{itemize}
			\item $F_{\{1/2,3,1\}}^{N_0}([a_{\delta},1]) \supset c_0[a_{\delta},1] \cup [a_{\delta},1]=[c_0 a_{\delta},1] $;
			\item $F_{\{1/2,3,1\}}^{N_0+N_1}([a_{\delta},1]) \supset c_1[c_0 a_{\delta},1] \cup [c_0 a_{\delta},1] =[c_0c_1a_{\delta},1]$; and - continuing until the $k$th step -
			\item  $F_{\{1/2,3,1\}}^{N_0+N_1+ \cdots+N_k}([a_{\delta},1]) \supset c_k[c_0c_1 \cdots c_{k-1} a_{\delta},1] \cup [c_0c_1 \cdots c_{k-1}a_{\delta},1] = [c_0 \cdots c_k a_{\delta},1]$.
			
		\end{itemize} 
		
		Now $\lim_{k \to \infty} c_0c_1 \cdots c_k a_{\delta}=0$ so there is $j \in \mathbb{N}$ such that $c_0c_1 \cdots c_j a_{\delta} < \gamma $. Let $N_{\delta,\gamma}= N_{\delta}+ \Sigma_{i=o}^j N_i$. Then $F_{\{1/2,3,1\}}^n([a,b]) \supset [\gamma,1]$ for each $n \ge N_{\delta,\gamma}$. 
		
	\end{proof}

\begin{definition}
	Suppose $p$ is a positive integer and for each $0 \le i \le p$, $[a_i,b_i]$ is an interval contained in $[0,1]$. Then $([a_i,b_i])_{i=0}^p$ is a \emph{descending interval trajectory} of $G \in \mathcal{F}$  on $[a_0,b_0]$ if $[a_{i+1},b_{i+1}] \subset G([a_i,b_i])$ for all $0 \le i <p$ 
\end{definition}

\begin{lemma} \label{chrisvanlemma1}
	Let $G\in \mathcal{F}$ and let $(y_i)_{i=0}^p =(c_iy_0)_{i=0}^p$ (with $c_0=1$) be some finite trajectory of $y_0$ under $G$ . Let $1 \ge \epsilon >0$, $p \in \mathbb{N}$, and $y_0 > \frac{2 \epsilon}{9}$. Suppose $y_p \ge \frac{\epsilon}{3}$. Then if $a_0 \in (0,1]$ such that $y_0 - \epsilon \le a_0 \le y_0-\frac{2 \epsilon}{9}$, there is a descending interval trajectory  $([a_i,b_i])_{i=0}^p =([a_i,y_i])_{i=0}^p$  under $G$ where $b_i=y_i$  such that 
	\begin{enumerate}
		\item $[a_i,y_i] \subset \overline{B}_{\epsilon}(y_i)=[y_i - \epsilon,y_i+\epsilon]$ for each $i \in \{0,1, \ldots, p\}$, and 
		\item $\diam([a_p,y_p]) \ge \frac{\epsilon^2}{9}$. 	
	\end{enumerate}
	
\end{lemma}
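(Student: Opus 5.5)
The plan is to build the intervals greedily, one step at a time along the given trajectory, keeping the right endpoint pinned at $y_i$. For each $i<p$ write $r_i=y_{i+1}/y_i$. Since $(y_i,y_{i+1})\in G$, $r_i$ is one of the slopes in the defining set $A$, so by Observation~\ref{chrisvanobs1} we have $\frac13\le r_i\le 3$. I will define inductively
\[
a_{i+1}=\max\{\,r_i a_i,\; y_{i+1}-\epsilon\,\}.
\]
We also need $a_{i+1}>0$, but this is automatic because $r_ia_i>0$ whenever $a_i>0$, and $a_0>0$ is given.

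The inclusion $[a_{i+1},y_{i+1}]\subset G([a_i,y_i])$ should be immediate. Since the graph of $x\mapsto r_ix$ lies in $G$, we get $G([a_i,y_i])\supseteq r_i[a_i,y_i]=[r_ia_i,y_{i+1}]$, and $a_{i+1}\ge r_ia_i$. So $([a_i,y_i])_{i=0}^p$ is a descending interval trajectory. Condition (1) also holds by construction:
\begin{itemize}
\item for $i=0$ it follows from $a_0\ge y_0-\epsilon$;
\item for $i\ge1$ it follows from $a_{i}\ge y_{i}-\epsilon$.
\end{itemize}

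The real content is condition (2), and there I track the relative length $\lambda_i=(y_i-a_i)/y_i$. From the definition, $y_{i+1}-a_{i+1}=\min\{r_i(y_i-a_i),\,\epsilon\}$. There are two cases:
\begin{itemize}
\item If the minimum is the first term, then $\lambda_{i+1}=\lambda_i$.
\item If the minimum is $\epsilon$, then $\lambda_{i+1}=\epsilon/y_{i+1}\ge\epsilon$, since $y_{i+1}\le1$.
\end{itemize}
Hence by induction $\lambda_p\ge\min\{\lambda_0,\epsilon\}$. This gives
\[
y_p-a_p=\lambda_p\,y_p\ge\min\{\lambda_0,\epsilon\}\cdot\frac{\epsilon}{3},
\]
using the hypothesis $y_p\ge\epsilon/3$.

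The main obstacle is getting the constant in (2). The crude bound is $\lambda_0\ge (2\epsilon/9)/y_0\ge 2\epsilon/9$, which only gives $y_p-a_p\ge 2\epsilon^2/27$, slightly less than $\epsilon^2/9$. So this step needs sharpening. My first attempt would be to use the extra room in $G([a_i,y_i])$. This set is the union of the images under all the slopes in $A$, including the identity, and when images overlap the left endpoint can be pushed further down than $r_ia_i$. Alternatively, I would split according to whether $y_0$ is close to $1$: if $y_0\le 2/3$, then $\lambda_0\ge \epsilon/3$ directly, which gives $\epsilon^2/9$. If $y_0>2/3$, I would look for a first step at which $\lambda$ can be reset toward $\epsilon/y$ using the cap $y_{i}-\epsilon$ (or slope $1/2$ overlap), so that the relative length is again at least $\epsilon/3$. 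If this cannot be made to work for the stated constant, the fallback is to prove (2) with $2\epsilon^2/27$; I expect the later specification argument only needs some explicit positive lower bound depending on $\epsilon$ alone, to feed into Lemma~\ref{altlemma}.
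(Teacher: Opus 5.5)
Your greedy rule $a_{i+1}=\max\{r_ia_i,\,y_{i+1}-\epsilon\}$ is exactly the paper's construction. Everything you verify about it is right, including the invariant $\lambda_{i+1}=\min\{\lambda_i,\epsilon/y_{i+1}\}\ge\min\{\lambda_i,\epsilon\}$.

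The gap is the one you flagged yourself, and it cannot be closed: for an arbitrary admissible $a_0$ (the paper's proof says any such $a_0$ will do), conclusion (2) is false. Take $G=F_{\{1/2,3,1\}}$, $\epsilon=\frac34$, the trajectory $(y_0,y_1,y_2)=(1,\frac12,\frac14)$, so that $y_2=\epsilon/3$, and $a_0=y_0-\frac{2\epsilon}{9}=\frac56$. Slope $3$ is unavailable on $(\frac13,1]$, so
\[
G\bigl([\tfrac56,1]\bigr)=[\tfrac{5}{12},\tfrac12]\cup[\tfrac56,1],\qquad G\bigl([a_1,\tfrac12]\bigr)=[\tfrac{a_1}{2},\tfrac14]\cup[a_1,\tfrac12]\quad\bigl(a_1\ge\tfrac{5}{12}\bigr).
\]
This forces $a_1\ge\frac5{12}$ and $a_2\ge\frac5{24}$, hence $\diam([a_2,y_2])\le\frac1{24}<\frac1{16}=\frac{\epsilon^2}{9}$. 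So neither of your sharpening ideas can work. The images under the different slopes are disjoint here, so there is no overlap to exploit. Your case $y_0\le\frac23$ is correct (there $\lambda_0\ge\epsilon/3$), but the case $y_0>\frac23$ genuinely fails.

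The paper's proof has the same hole. It sets $k=\max\{i:\diam([a_i,y_i])\ge\epsilon/3\}$ without handling the case where this set is empty. That is exactly the situation above: the diameters are $\frac16,\frac1{12},\frac1{24}$, all below $\epsilon/3=\frac14$. When a cap $y_i-\epsilon$ does bind, your bookkeeping even gives $\lambda_p\ge\epsilon$, hence a bound of $\epsilon^2/3$.

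So your fallback is the correct conclusion, not a retreat. Your argument proves (2) with $\min\{\lambda_0,\epsilon\}\cdot\frac{\epsilon}{3}\ge\frac{2\epsilon^2}{27}$ for every admissible $a_0$. It proves $\frac{\epsilon^2}{9}$ under the extra hypothesis $a_0\le(1-\frac{\epsilon}{3})y_0$, i.e.\ $\lambda_0\ge\epsilon/3$.

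Theorem~\ref{chrisvanthm1} invokes the lemma with precisely the bad choice $a_0=y_0-\frac{2\epsilon}{9}$ in Cases 2 and 3; the example above is an instance of Case 3. Its proof therefore needs the same adjustment. However, as you anticipated, Theorem~\ref{G has the CR-specification property} only needs some $\delta=\delta(\epsilon)>0$ to feed into Lemma~\ref{altlemma}. Running it with any $\delta<\frac{2\epsilon^2}{27}$ repairs the argument.
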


\begin{proof}
	Assume the hypotheses. Since $y_0 > \frac{2 \epsilon}{9}$, there is $a_0 \in (0,1]$ such that $y_0 - \epsilon \le a_0 \le y_0 - \frac{2 \epsilon}{9}$ (and any such $a_0$ will do). Then $[a_0,y_0] \subset [y_0 - \epsilon, y_0 + \epsilon]$. 
	
	\vphantom{}
	
	Now $a_0>0$ and $\frac{c_1}{c_0} a_0 = c_1a_0 >0$. Consider 
	$$ [\frac{c_1}{c_0} a_0, y_1] \cap [y_1 - \epsilon,y_1 + \epsilon] = \frac{c_1}{c_0}[a_0,y_0] \cap [y_1 - \epsilon,y_1].$$ 
	Let $a_1= \max \{c_1a_0,y_1-\epsilon\}$. Then $[a_1,y_1] \subset [y_1 - \epsilon,y_1+\epsilon]$ and $G([a_0,y_0]) \supset [a_1,y_1]$. 
	Continuing inductively, assume $[a_i,y_i]$ ($i <p$) was chosen so that $0 < a_i \le 1$, $[a_i,y_i] \subset [y_i - \epsilon,y_i+\epsilon]$ and $G([a_{i-1},y_{i-1}]) \supset [a_i,y_i]$  . Then consider 
	$$ \frac{c_{i+1}}{c_i}[a_i,y_i] \cap [y_{i+1} - \epsilon, y_{i+1} + \epsilon]=[\frac{c_{i+1}}{c_i}a_i,y_{i+1}] \cap [y_{i+1} - \epsilon, y_{i+1}].$$
	
	Let $a_{i+1}= \max \{\frac{c_{i+1}}{c_i}a_i, y_{i+1} -\epsilon\}$. Then $[a_{i+1},y_{i+1}] \subset [y_{i+1} - \epsilon,y_{i+1}+\epsilon]$ and $G([a_i,y_i]) \supset [a_{i+1},y_{i+1}]$ . 
	
	\vphantom{}
	
	Let $k = \max \{i \in \{0,1, \ldots, p \}: \diam([a_i,y_i]) \ge \frac{ \epsilon}{3} \}$. If $k=p$, then $\diam([a_p,y_p]) \ge \frac{ \epsilon}{3}\geq \frac{\epsilon ^2}{9}$, and we are done. So suppose $k < p$. Then $\diam([a_i,y_i]) < \frac{ \epsilon}{3}$ for $i>k$. Also, this means that for $i>k$, $[a_i,y_i] = \frac{c_i}{c_k} [a_k,y_k]$, for otherwise $[a_i,y_i]=[y_i-\epsilon,y_i]$ and $\diam([a_i,y_i]) = \epsilon > \frac{\epsilon}{3}$. Moreover, $$\frac{\epsilon}{3} \le y_p = \frac{c_p}{c_k}y_k \le \frac{c_p}{c_k}.$$ Then $\diam([a_p,y_p])=\frac{c_p}{c_k}\diam([a_k,y_k] \ge \frac{ \epsilon}{3} (\frac{\epsilon}{3})= \frac{\epsilon^2}{9}$. 
	
\end{proof}

\begin{lemma} \label{chrisvanlemma2}
	Let $G\in \mathcal{F}$, let $1 \ge \epsilon>0$, and let $(y_i)_{i=0}^p =(c_iy_0)_{i=0}^p$ (with $c_0=1$) be some finite trajectory of $y_0$ under $G$ . Suppose $y_i < \epsilon$ for all $i \in \{0,1,\ldots,p\}$. Let $[a_i,b_i]=[\frac{\epsilon}{9}, \epsilon]$ for $0 \le i \le p$. Then $([a_i,b_i])_{i=0}^p$ is a descending interval trajectory under $G$ such that 
	\begin{enumerate}
		\item $[a_i,b_i] \subset \overline{B}_{\epsilon}(y_i)$ for $0 \le i \le p$, and
		\item $\diam([a_p,b_p]) \ge \frac{\epsilon^2}{9}$.
	\end{enumerate} 
	
\end{lemma}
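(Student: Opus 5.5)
This is a direct verification, and I will check the three required facts in order.

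\emph{Descending interval trajectory.} Since $A$ contains $1$, the relation $G = F_A$ contains the diagonal. The paper's first Observation on $\mathcal{F}$ records exactly this consequence: $B \subset G(B)$ for every $B \subset [0,1]$. Taking $B = [\frac{\epsilon}{9},\epsilon]$ gives
\[
[a_{i+1},b_{i+1}] = \left[\tfrac{\epsilon}{9},\epsilon\right] \subset G\left(\left[\tfrac{\epsilon}{9},\epsilon\right]\right) = G([a_i,b_i])
\]
for each $0 \le i < p$. Because $\epsilon \le 1$, each interval lies in $[0,1]$, so $([a_i,b_i])_{i=0}^p$ satisfies the definition.

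\emph{Item 1.} By hypothesis $0 \le y_i < \epsilon$. For $t \in [\frac{\epsilon}{9},\epsilon]$ we therefore have
\[
t - y_i \le \epsilon - 0 = \epsilon \quad\text{and}\quad y_i - t < \epsilon - \tfrac{\epsilon}{9} < \epsilon,
\]
so $|t - y_i| \le \epsilon$. Hence $[a_i,b_i] \subset [y_i - \epsilon, y_i + \epsilon] = \overline{B}_{\epsilon}(y_i)$.

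\emph{Item 2.} Here $\diam([a_p,b_p]) = \epsilon - \frac{\epsilon}{9} = \frac{8\epsilon}{9}$. Since $0 < \epsilon \le 1$, we have $\epsilon \ge \epsilon^2$, so $\frac{8\epsilon}{9} \ge \frac{\epsilon}{9} \ge \frac{\epsilon^2}{9}$.

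No step is a real obstacle. The only point needing care is that the constant-interval choice depends on the identity branch of $G$, which is present because $1 \in A$ for every $A \in \mathcal{A}$, together with the hypothesis $\epsilon \le 1$, which keeps the intervals inside $I$.
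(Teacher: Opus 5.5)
Your proof is correct and follows the paper's approach. The paper's one-line argument is the inequality chain $y_i-\epsilon\le 0<a_i<b_i\le\epsilon\le y_i+\epsilon$, which is your Item 1; you additionally spell out the descending-trajectory property (via the identity branch of $G$, using $1\in A$) and the diameter bound $\frac{8\epsilon}{9}\ge\frac{\epsilon^2}{9}$, both of which the paper leaves implicit.
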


\begin{proof}
	This follows from the fact that $$y_i - \epsilon \le 0 <a_i <b_i \le \epsilon \le y_i + \epsilon$$ for each $0 \le i \le p$. 	
\end{proof}

\begin{lemma} \label{chrisvanlemma3}
	Let $G \in \mathcal{F}$, and let $1 \ge \epsilon >0$ and let $(y_i)_{i=0}^p =(c_iy_0)_{i=0}^p$ (with $c_0=1$) be some finite trajectory of $y_0$ under $G$. 	Let $y_0 > \frac{2 \epsilon}{9}$ and $y_p < \frac{\epsilon}{3}$. Then there exists a descending interval trajectory $([a_i,b_i])_{i=0}^p =([a_i,y_i])_{i=0}^p  $ under $G$ such that
	\begin{enumerate}
		\item $[a_i,b_i] \subset \overline{B}_{\epsilon}(y_i)$ for $0 \le i \le p$, and
		\item $\diam([a_p,b_p]) \ge \frac{\epsilon^2}{9}$.
	\end{enumerate} 
\end{lemma}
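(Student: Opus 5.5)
The plan is to reduce the statement to Lemmas~\ref{chrisvanlemma1} and~\ref{chrisvanlemma2}. We split the trajectory at the last time it is still at height at least $\frac{\epsilon}{3}$. After that time we freeze the interval, using that the identity line lies in $G$, i.e.\ $B\subset G(B)$ by the first Observation. One caveat: the right endpoints cannot literally all equal $y_i$. Indeed $y_p$ may be far smaller than $\frac{\epsilon^2}{9}$, and then no interval $[a_p,y_p]$ could have the required diameter. So I would prove the statement with $b_i=y_i$ only up to the splitting index and with a frozen interval afterwards; conclusions (1) and (2) are what matter.

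\emph{Case A: $y_i<\frac{\epsilon}{3}$ for all $i\in\{0,\ldots,p\}$.} Then $y_i<\epsilon$ for all $i$. Lemma~\ref{chrisvanlemma2} gives the constant trajectory $[\frac{\epsilon}{9},\epsilon]$, which satisfies (1) and (2).

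\emph{Case B: some $y_i\ge\frac{\epsilon}{3}$.} Let $k=\max\{i\in\{0,\ldots,p\}\colon y_i\ge \frac{\epsilon}{3}\}$. Since $y_p<\frac{\epsilon}{3}$, we have $k<p$. Apply Lemma~\ref{chrisvanlemma1} to the trajectory $(y_i)_{i=0}^{k}$. Its hypotheses hold because $y_0>\frac{2\epsilon}{9}$ and $y_k\ge\frac{\epsilon}{3}$, and we may take $a_0=y_0-\frac{2\epsilon}{9}\in(0,1]$. (If $k=0$, just take $[a_0,y_0]$ directly, which has diameter $\frac{2\epsilon}{9}\ge\frac{\epsilon^2}{9}$.) This yields a descending interval trajectory $([a_i,y_i])_{i=0}^k$ with $[a_i,y_i]\subset\overline{B}_\epsilon(y_i)$ and $\diam([a_k,y_k])\ge\frac{\epsilon^2}{9}$. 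For $k<i\le p$ set $[a_i,b_i]=[a_k,y_k]$. Descent holds because $[a_k,y_k]\subset G([a_k,y_k])$, since $1\in A$ for every $G\in\mathcal F$. Condition (2) is inherited.

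The key step, and the only real obstacle, is condition (1) for $i>k$. By Observation~\ref{chrisvanobs1}, $y_{k+1}\ge \frac{1}{3}y_k$. Since $y_{k+1}<\frac{\epsilon}{3}$ by maximality of $k$, we get $y_k<\epsilon$, so $[a_k,y_k]\subset(0,\epsilon]$. On the other hand, for every $i$ we have $y_i\ge 0$, hence $y_i-\epsilon\le 0$ and $y_i+\epsilon\ge\epsilon$. Therefore $[a_k,y_k]\subset[y_i-\epsilon,y_i+\epsilon]=\overline{B}_\epsilon(y_i)$ for all $i>k$. The bound $\frac{1}{3}x\le y\le 3x$ is exactly what makes this work: it prevents the trajectory from jumping from above $\epsilon$ straight to below $\frac{\epsilon}{3}$.
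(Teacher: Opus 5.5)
Your argument follows the paper's proof essentially step for step. You take the last index $k$ with $y_k\ge\frac{\epsilon}{3}$, use Observation~\ref{chrisvanobs1} to get $y_k<\epsilon$, apply Lemma~\ref{chrisvanlemma1} on $\{0,\ldots,k\}$, and freeze $[a_k,y_k]$ via the diagonal afterwards. Two differences from the paper are cosmetic: the paper sends the case ``all $y_i<\epsilon$'' rather than ``all $y_i<\frac{\epsilon}{3}$'' to Lemma~\ref{chrisvanlemma2}, and its frozen intervals also break $b_i=y_i$, exactly as you point out. One small slip: for $i>k$, the inequality $y_i-\epsilon\le 0$ holds because $y_i<\frac{\epsilon}{3}$, not because $y_i\ge 0$.

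There is one step that actually fails. It is inherited from Lemma~\ref{chrisvanlemma1}, so the paper shares it, but your explicit choice $a_0=y_0-\frac{2\epsilon}{9}$ is exactly where it breaks. Take $G=F_{\{3,1,1/2\}}$, $\epsilon=\frac{3}{4}$ and $y=(1,\frac{1}{2},\frac{1}{4},\frac{1}{8})$. Then $k=2$ and $a_0=\frac{5}{6}$. We have $G([\frac{5}{6},1])=[\frac{5}{12},\frac{1}{2}]\cup[\frac{5}{6},1]$ and $G([a_1,\frac{1}{2}])=[\frac{a_1}{2},\frac{1}{4}]\cup[a_1,\frac{1}{2}]$. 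Hence $a_1\ge\frac{5}{12}$, $a_2\ge\frac{5}{24}$, and $\diam([a_2,y_2])\le\frac{1}{24}<\frac{1}{16}=\frac{\epsilon^2}{9}$. The proof of Lemma~\ref{chrisvanlemma1} tacitly assumes that some $[a_i,y_i]$ has diameter at least $\frac{\epsilon}{3}$, which fails here.

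The repair depends on the size of $y_0$. When $y_0>\frac{\epsilon}{3}$, choose $a_0$ with $y_0-a_0\ge\frac{\epsilon}{3}$, for instance $a_0=y_0-\epsilon$ if $y_0>\epsilon$; then that proof is valid. In the example, $a_0=\frac{1}{4}$ gives $[\frac{1}{16},\frac{1}{4}]$ at time $2$. When $y_0\le\frac{\epsilon}{3}$ your choice is harmless. In the case the proof misses, all intervals are rescalings of $[a_0,y_0]$, so $\diam([a_k,y_k])=\frac{y_k}{y_0}\cdot\frac{2\epsilon}{9}\ge\frac{2\epsilon}{9}$.
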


\begin{proof}
	Now $y_p < \frac{\epsilon}{3}$ and $y_0 > \frac{2 \epsilon}{9}$, and there is $a_0 \in (0,1]$ such that $a_0 \le y_0 - \frac{2 \epsilon}{9}$. If $y_i < \epsilon$ for $0 \le i \le p$, then by Lemma \ref{chrisvanlemma2}, the claim holds. So suppose there is $i \in \{0,1,\ldots,p \}$ such that $y_i \ge \epsilon$. Let $k= \max \{i: 0 \le i \le p, y_i \ge \frac{\epsilon}{3} \}$. Then $k < p$ and $y_{k+1} < \frac{\epsilon}{3}$  . Also, according to Observation \ref{chrisvanobs1},  $y_{k+1} \geq \frac{y_k}{3}$ . So $y_k <\epsilon $. 
	
	\vphantom{}
	
	Since we have $y_0 - \epsilon \le a_0 \le y_0-\frac{2 \epsilon}{9}$, by Lemma \ref{chrisvanlemma1}, there is a descending interval trajectory under $G$  $([a_i,y_i])_{i=0}^k = ([a_i,b_i])_{i=0}^k$ (where $b_i = y_i$) such that 
	\begin{enumerate}
		\item $[a_i, y_i] \subset [y_i - \epsilon, y_i + \epsilon]$ for $0 \le i \le k$, and 
		\item $\diam([a_k,b_k] \ge \frac{\epsilon^2}{9}$.	
	\end{enumerate}
	
	For $k+1 \le i \le p$, define $[a_i,b_i] = [a_k,b_k] \subset [0,\epsilon] \subset \overline{B}_{\epsilon}(y_i)$. Now, since $[a_p,b_p]=[a_k,b_k]$, we have $\diam([a_p,b_p]) \ge \frac{\epsilon^2}{9}$.

\end{proof}

\begin{lemma} \label{chrisvanlemma4}
	Let $G \in \mathcal{F} $, let $1 \ge \epsilon>0$, and let $(y_i)_{i=0}^p =(c_iy_0)_{i=0}^p$ (with $c_0=1$) be some finite trajectory of $y_0$ under $G$ .  Suppose $y_0 < \epsilon$. Then there exists a descending interval trajectory   $([a_i,b_i])_{i=0}^p$ under $G$ such that 
	\begin{enumerate}
		\item $[a_0,b_0] \subset [\frac{\epsilon}{9},1]$,
		\item $[a_i,b_i] \subset \overline{B}_{\epsilon}(y_i)$ for $0 \le i \le p$, and 
		\item $\diam([a_p,b_p]) \ge \frac{\epsilon^2}{9}$.
	\end{enumerate}
	
\end{lemma}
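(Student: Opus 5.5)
The plan is to split according to whether the trajectory ever reaches height $\epsilon$. For the first $k$ steps we park on the fixed interval $[\frac{\epsilon}{9},\epsilon]$, which the identity branch of $G$ maps into itself. At the first time $k$ the trajectory reaches $\epsilon$, we jump onto an interval with top $y_k$ and then run Lemma~\ref{chrisvanlemma1} or Lemma~\ref{chrisvanlemma3} on the tail $(y_i)_{i=k}^p$.

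\emph{Case 1: $y_i<\epsilon$ for all $0\le i\le p$.} Lemma~\ref{chrisvanlemma2} gives the descending interval trajectory $[a_i,b_i]=[\frac{\epsilon}{9},\epsilon]$ with properties (2) and (3). Property (1) is immediate since $[\frac{\epsilon}{9},\epsilon]\subset[\frac{\epsilon}{9},1]$.

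\emph{Case 2: some $y_i\ge\epsilon$.} Let $k=\min\{i\colon y_i\ge\epsilon\}$. Since $y_0<\epsilon$, we have $k\ge1$, and so $y_{k-1}<\epsilon\le y_k$. Write $y_k=c\,y_{k-1}$ with $c\in A$ (the slope of the branch used). By Observation~\ref{chrisvanobs1}, $y_{k-1}\ge y_k/3\ge\epsilon/3$.

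For $0\le i\le k-1$ set $[a_i,b_i]=[\frac{\epsilon}{9},\epsilon]$. This is descending because $1\in A$, so $[\frac{\epsilon}{9},\epsilon]\subset G([\frac{\epsilon}{9},\epsilon])$. It lies in $\overline{B}_\epsilon(y_i)$ because $0\le y_i<\epsilon$, and it gives (1) at $i=0$.

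At index $k$, set $a_k=\max\{y_k-\epsilon,\ c\epsilon/9\}$ and $b_k=y_k$. Then $[a_k,y_k]\subset c[\frac{\epsilon}{9},\epsilon]\subset G([a_{k-1},b_{k-1}])$, because $y_k=c\,y_{k-1}\le c\epsilon$. I also need $y_k-\epsilon\le a_k\le y_k-\frac{2\epsilon}{9}$. The left inequality is trivial. For the right one, $y_k-\epsilon\le y_k-\frac{2\epsilon}{9}$, and
\[
\frac{c\epsilon}{9}=\frac{y_k\,\epsilon}{9\,y_{k-1}}\le\frac{y_k}{3}\le\frac{7y_k}{9}\le y_k-\frac{2\epsilon}{9},
\]
using $y_{k-1}\ge\epsilon/3$ and $y_k\ge\epsilon$.

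Now apply the earlier lemmas to the shifted trajectory $(y_i)_{i=k}^p$, which has $y_k\ge\epsilon>\frac{2\epsilon}{9}$, with starting left endpoint $a_k$.
\begin{itemize}
\item If $y_p\ge\frac{\epsilon}{3}$, use Lemma~\ref{chrisvanlemma1}, which allows any admissible left endpoint in $[y_k-\epsilon,\,y_k-\frac{2\epsilon}{9}]$.
\item If $y_p<\frac{\epsilon}{3}$, use Lemma~\ref{chrisvanlemma3}. Its proof only invokes Lemma~\ref{chrisvanlemma1} with a left endpoint in $[y_k-\epsilon,\,y_k-\frac{2\epsilon}{9}]$, so our $a_k$ can be used there.
\end{itemize}
If $k=p$, nothing further is needed, since $\diam[a_p,y_p]\ge\frac{2\epsilon}{9}\ge\frac{\epsilon^2}{9}$. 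Concatenating the two pieces gives a descending interval trajectory satisfying (1)–(3).

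The main obstacle is the junction step at index $k$. There the chosen left endpoint $a_k$ must simultaneously stay inside the image $c[\frac{\epsilon}{9},\epsilon]$ and satisfy the admissibility hypothesis of Lemma~\ref{chrisvanlemma1}, namely $a_k\le y_k-\frac{2\epsilon}{9}$. This is exactly where the bound $y_{k-1}\ge\epsilon/3$ from Observation~\ref{chrisvanobs1} is needed. A secondary subtlety is checking that Lemma~\ref{chrisvanlemma3} accepts the prescribed starting endpoint rather than choosing its own.
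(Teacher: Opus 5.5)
Your proof is correct and follows essentially the same route as the paper. It uses the same split on whether the trajectory reaches $\epsilon$, the same first index $k$ with $y_{k-1}\ge\frac{\epsilon}{3}$ from Observation~\ref{chrisvanobs1}, parking on a fixed interval via the identity branch, and a hand-off to Lemma~\ref{chrisvanlemma1} or Lemma~\ref{chrisvanlemma3}; the only difference is that the paper parks on $[\frac{\epsilon}{9},y_{k-1}]$ and hands off at index $k-1$ with $a_{k-1}=\frac{\epsilon}{9}$ (admissible since $y_{k-1}\ge\frac{\epsilon}{3}$), whereas you park on $[\frac{\epsilon}{9},\epsilon]$ and do the junction step to index $k$ by hand, and both versions rely on Lemma~\ref{chrisvanlemma3} accepting a prescribed admissible left endpoint, which you rightly make explicit.
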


\begin{proof} If $y_i < \epsilon$ for $0 \le i \le p$, then by Lemma \ref{chrisvanlemma2}, the claim holds. So suppose that there is $i_0 \in \{0,1,\ldots,p \}$ such that $y_{i_0} \ge \epsilon$. Let $k = \min \{i: 0 \le i \le p, y_i \ge \epsilon\}$. (Note that $k$ exists since $y_{i_0} \ge \epsilon$. Also, $k > 0$.)
	
	\vphantom{}
	
	Since $y_k \ge \epsilon$ and $y_{k-1}< \epsilon$, and  by Observation \ref{chrisvanobs1}, $y_k \leq 3y_{k-1}$, it follows that $\frac{\epsilon}{3} \le y_{k-1} < \epsilon$. For $i \in \{0,1, \ldots,k-1\}$ define $[a_i,b_i]= [\frac{\epsilon}{9}, y_{k-1}] \subset [\frac{\epsilon}{9}, 1]$. Then $[a_0,b_0]=[\frac{\epsilon}{9}, y_{k-1}] \subset [\frac{\epsilon}{9}, 1]$, so condition (1) is satisfied. Since for each $i \in \{0,1, \ldots, k-1 \}$, $\overline{B}_{\epsilon}(y_i) \cap [0,1]= [0,y_i + \epsilon]$, $[a_i,b_i] = [\frac{\epsilon}{9}, y_{k-1}] \subset [\frac{\epsilon}{9}, \epsilon] \subset [0, y_i + \epsilon] \subset \overline{B}_{\epsilon}(y_i)$. Hence, condition (2) is satisfied (up to $k-1$). 
	
	\vphantom{}
	
	Furthermore, $\diam([a_{k-1},b_{k-1}] = \diam([\frac{\epsilon}{9}, y_{k-1}] \ge \frac{\epsilon}{3} - \frac{\epsilon}{9}= \frac{2 \epsilon}{9}$, so $y_{k-1} - \epsilon < 0 \le a_{k-1} = \frac{\epsilon}{9} < y_{k-1} - \frac{2 \epsilon}{9}$. 
	
	\vphantom{}
	
	In order to construct  $[a_i,b_i]$ for $i \in \{k, \ldots, p\}$ note that either $y_p \ge \frac{\epsilon}{3}$ or $y_p < \frac{\epsilon}{3}$. 
	\begin{itemize}
		\item If $y_p \ge \frac{\epsilon}{3}$, apply Lemma \ref{chrisvanlemma1} to $(y_i)_{i=k-1}^p = (\frac{c_i}{c_{k-1}} y_{k-1})_{i=k-1}^p$ to obtain a descending interval trajectory under $G$ $([a_i,b_i])_{i=k-1}^p$ such that  (1) for $k-1 \le i \le p$, $[a_i,y_i]=[a_i,b_i] \subset \overline{B}_{\epsilon}(y_i)$, and (2) $\diam ([a_p,y_p]) \ge \frac{\epsilon^2}{9}$.
		\item If $y_p < \frac{\epsilon}{2}$, apply Lemma \ref{chrisvanlemma3} to $(y_i)_{i=k-1}^p = (\frac{c_i}{c_{k-1}} y_{k-1})_{i=k-1}^p$ to obtain a descending interval trajectory under $G$  $([a_i,b_i])_{i=k-1}^p = ([a_i, y_i])_{i=k-1}^p $ such that (1)  for $k-1 \le i \le p$, $[a_i,y_i]=[a_i,b_i] \subset \overline{B}_{\epsilon}(y_i)$, and (2) $\diam ([a_p,y_p]) \ge \frac{\epsilon^2}{9}$.
		
	\end{itemize}

\end{proof}

\begin{theorem} \label{chrisvanthm1}
	Let $ G\in \mathcal{F}$, let $1 \ge \epsilon >0$, and let $(y_i)_{i=0}^p =(c_iy_0)_{i=0}^p$ (with $c_0=1$) be some finite trajectory of $y_0$ under $G$. Then there exists a descending interval trajectory  $([a_i,b_i])_{i=0}^p$ under $G$  such that 
	\begin{enumerate}
		\item $[a_0,b_0] \subset [\frac{\epsilon}{9},1]$,
		\item $[a_i,b_i] \subset \overline{B}_{\epsilon}(y_i)$ for $0 \le i \le p$, and 
		\item $\diam([a_p,b_p]) \ge \frac{\epsilon^2}{9}$.
	\end{enumerate}

\end{theorem}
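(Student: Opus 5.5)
The plan is to split into cases according to the size of the initial point $y_0$ and then invoke Lemmas \ref{chrisvanlemma1}--\ref{chrisvanlemma4}, which already cover the required configurations. Throughout, $G\in\mathcal{F}$, $1\ge\epsilon>0$ and the trajectory $(y_i)_{i=0}^p=(c_iy_0)_{i=0}^p$ are fixed. Conditions (2) and (3) are exactly the conclusions of those lemmas, so the only additional work is to arrange condition (1), $[a_0,b_0]\subset[\frac{\epsilon}{9},1]$.

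\textbf{Case 1: $y_0<\epsilon$.} Lemma \ref{chrisvanlemma4} applies directly and gives all three conditions, including (1).

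\textbf{Case 2: $y_0\ge\epsilon$.} Then $y_0>\frac{2\epsilon}{9}$, and I split according to $y_p$.

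\emph{Subcase $y_p\ge\frac{\epsilon}{3}$.} I apply Lemma \ref{chrisvanlemma1} with the choice $a_0=y_0-\frac{2\epsilon}{9}$. This lies in $[y_0-\epsilon,\,y_0-\frac{2\epsilon}{9}]$, and it satisfies $a_0\ge\epsilon-\frac{2\epsilon}{9}=\frac{7\epsilon}{9}>\frac{\epsilon}{9}$, so $a_0\in(0,1]$. The lemma then produces $([a_i,y_i])_{i=0}^p$ satisfying (2) and (3). Since $b_0=y_0\le 1$, we get $[a_0,b_0]\subset[\frac{\epsilon}{9},1]$, which is (1).

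\emph{Subcase $y_p<\frac{\epsilon}{3}$.} I apply Lemma \ref{chrisvanlemma3}. Its proof starts with some $a_0\le y_0-\frac{2\epsilon}{9}$ and passes it to Lemma \ref{chrisvanlemma1} on the initial segment $0,\dots,k$. I will make the same explicit choice $a_0=y_0-\frac{2\epsilon}{9}$ there. The resulting trajectory has $[a_0,b_0]=[a_0,y_0]$ with $a_0\ge\frac{7\epsilon}{9}$, so (1) holds. The tail intervals for $i>k$ are the constant interval $[a_k,b_k]$. This is a valid descending interval trajectory because $B\subset G(B)$ by the first Observation, which holds since $1\in A$.

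The main point to watch is that Lemma \ref{chrisvanlemma3} does not explicitly record the freedom in choosing $a_0$. I will therefore restate the construction with this specific $a_0$, noting that any $a_0$ in the allowed window works in Lemma \ref{chrisvanlemma1}. A second point is the step in Lemma \ref{chrisvanlemma3}'s proof that uses $y_k<\epsilon$ to get $[a_k,b_k]\subset[0,\epsilon]\subset\overline{B}_\epsilon(y_i)$. That step needs $y_i\ge0$ and $b_k=y_k\le\epsilon$, and I will verify both. With these checks, the three cases exhaust all possibilities and the theorem follows.
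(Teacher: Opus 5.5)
Your case split, and in particular the choice $a_0=y_0-\frac{2\epsilon}{9}$ when $y_0\ge\epsilon$, is exactly the paper's argument. But that step does not give condition (3). The claim you rely on, that any $a_0$ in the window $[y_0-\epsilon,\,y_0-\frac{2\epsilon}{9}]$ works in Lemma \ref{chrisvanlemma1}, is false, and the paper's proof has the same defect. The proof of Lemma \ref{chrisvanlemma1} needs some index $k$ with $\diam([a_k,y_k])\ge\frac{\epsilon}{3}$. Your initial interval has length only $\frac{2\epsilon}{9}<\frac{\epsilon}{3}$, and nothing forces a later interval to be longer. For example, if $y_0=1$ then $c_i=y_i\le1$ and the truncation at $y_i-\epsilon$ never occurs. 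The intervals are then $y_i[a_0,1]$, of length $\frac{2\epsilon}{9}y_i<\frac{\epsilon}{3}$, and the final length $\frac{2\epsilon}{9}y_p$ is below $\frac{\epsilon^2}{9}$ whenever $y_p<\frac{\epsilon}{2}$.

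Concretely, let $G=F_{\{1/2,3,1\}}$, $\epsilon=1$ and $(y_i)=(1,\frac{1}{2},\frac{1}{4},\frac{1}{8},\frac{3}{8})$, which falls in your subcase $y_p\ge\frac{\epsilon}{3}$. Here $G([a,b])=\frac{1}{2}[a,b]\cup[a,b]\cup3\bigl([a,b]\cap[0,\frac{1}{3}]\bigr)$. So any descending interval trajectory $([a_i,y_i])_{i=0}^4$ starting at $[\frac{7}{9},1]$ must have $a_1\ge\frac{7}{18}$, $a_2\ge\frac{7}{36}$, $a_3\ge\frac{7}{72}$ and $a_4\ge3a_3\ge\frac{7}{24}$. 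Hence $\diam([a_4,y_4])\le\frac{1}{12}<\frac{1}{9}$. Your second subcase inherits the same problem, because Lemma \ref{chrisvanlemma3} feeds the same $a_0$ into Lemma \ref{chrisvanlemma1} on $\{0,\dots,k\}$ and then just keeps $[a_k,y_k]$ constant (append $y_5=\frac{3}{16}$ to the example).

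The repair is to start from a longer interval: for $y_0\ge\epsilon$ take $a_0=y_0-\frac{\epsilon}{3}$. This lies in $(0,1]$ and in the window of Lemma \ref{chrisvanlemma1}, and $a_0\ge\frac{2\epsilon}{3}\ge\frac{\epsilon}{9}$, so (1) still holds. Now $\diam([a_0,y_0])=\frac{\epsilon}{3}$, so the index $k$ in the proof of Lemma \ref{chrisvanlemma1} exists. The estimate $\diam([a_p,y_p])=\frac{c_p}{c_k}\diam([a_k,y_k])\ge\frac{y_p}{y_k}\cdot\frac{\epsilon}{3}\ge\frac{\epsilon^2}{9}$ then goes through. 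In the route via Lemma \ref{chrisvanlemma3}, replace $p$ by the last index where $y_i\ge\frac{\epsilon}{3}$. Your Case 1 is fine as it stands: in the proof of Lemma \ref{chrisvanlemma4}, the interval at the first index with $y_i\ge\epsilon$ already has length at least $\frac{2\epsilon}{3}$, so the needed index exists there.
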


\begin{proof} Suppose $G \in \mathcal{F}$ ,  $1 \ge \epsilon >0$, and $(y_i)_{i=0}^p =(c_iy_0)_{i=0}^p$ (with $c_0=1$) is some finite trajectory of $y_0$ under $G$ .
	\begin{itemize}
		\item \textit{Case 1} If $y_0 < \epsilon$, then the result follows from Lemma \ref{chrisvanlemma4}.
		\item \textit{Case 2} If $y_0 \ge \epsilon$ and $y_p < \frac{\epsilon}{3}$, then Lemma \ref{chrisvanlemma3} applies and conditions (2) and (3) are satisfied. Furthermore, $y_0 \ge \epsilon$ implies $y_0 - \epsilon \ge 0$ and $y_0 - \frac{2 \epsilon}{9} \ge \frac{\epsilon}{9}$. Then condition (1) is also satisfied, because $a_0$ can be chosen to be $y_0 - \frac{2 \epsilon}{9}$. Also, $b_0=y_0$, so $[a_0,b_0]=[y_0 - \frac{2 \epsilon}{9},y_0]$ and $a_0=y_0 - \frac{2 \epsilon}{9} \ge \frac{\epsilon}{9}$. 
		\item \textit{Case 3} If $y_0 \ge \epsilon$ and $y_p \ge \frac{\epsilon}{3}$, then Lemma \ref{chrisvanlemma1} applies (4.13) and conditions (2) and (3) are satisfied. Again, $y_0 -\frac{2 \epsilon}{9} \ge \frac{\epsilon}{9}$, and $a_0$ can be chosen to be $y_0 - \frac{2 \epsilon}{9}$. Also, $y_0=b_0 \in (0,1]$, so $[a_0,b_0]= [y_0 - \frac{2 \epsilon}{9},y_0] \subset [\frac{\epsilon}{9},1]$. Thus, condition (1) is satisfied.
	\end{itemize}

	Since each trajectory $(y_i)_{i=0}^p$ falls into exactly one of cases (1),(2),(3), the theorem holds.
	
\end{proof}

\begin{observation}
	In the proof of the following theorem we assume that $\epsilon \leq  1$. It doesn't weaken our results since for $\epsilon >  1$ we trivially get the CR-specification property.
\end{observation}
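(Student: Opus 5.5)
The plan is to reduce everything to the definition of the CR-specification property for $X_F^+$ with $X=I=[0,1]$, together with Theorem~\ref{CR-spec}. By that theorem it suffices to verify the CR-specification property for $I^+_{F}$, $F\in\mathcal{F}$. So the claim amounts to this: to find, for every $\epsilon>0$, an $N$ with the required tracing property, it is enough to treat $\epsilon\le 1$. I would give two complementary justifications.

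\emph{First argument: monotonicity in $\epsilon$.} Suppose that for every $\epsilon\in(0,1]$ there is $N_\epsilon$ such that every $N_\epsilon$-spaced CR-specification in $I_F^+$ is $\epsilon$-traced by some $\yy\in I_F^+$. For $\epsilon>1$ put $N_\epsilon:=N_1$. Any $N_1$-spaced specification $\mathcal{S}$ is then $1$-traced by some $\yy$, meaning $d(\xx_i(j),\yy(j))\le 1<\epsilon$ for all relevant $i,j$. Hence $\mathcal{S}$ is $\epsilon$-traced by the same $\yy$. This shows the property for small $\epsilon$ implies it for all $\epsilon$.

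\emph{Second argument: the trivial tracing noted in the Observation.} Since every coordinate lies in $[0,1]$, we have $|\xx_i(j)-\yy(j)|\le 1$ for \emph{any} two points $\xx_i,\yy\in I_F^+$. So for $\epsilon\ge 1$ we may take $N=1$ and any $\yy\in I_F^+$, for instance the top $\mathbf{v}=(0,0,\ldots)$, which lies in $I_F^+$ because $(0,0)\in F$. Every specification is then $\epsilon$-traced by $\mathbf{v}$. This is exactly the sense in which the case $\epsilon>1$ is trivial.

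There is no real obstacle here. The only points needing care are that the tracing inequality in the definition is non-strict ($\le\epsilon$), so the bound $|\xx_i(j)-\yy(j)|\le 1$ suffices even at $\epsilon=1$, and that the distance is measured coordinatewise in $I$ rather than with the product metric $D$. Both are immediate from the definitions in Section~3.
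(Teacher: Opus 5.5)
Your proposal is correct and matches the paper's (unstated) justification: the paper gives no argument beyond ``trivially,'' and the intended reason is your second argument. Since $|x-y|\le 1$ for all $x,y\in[0,1]$ and tracing uses the non-strict coordinatewise bound, for $\epsilon\ge 1$ any point of $I_G^+$ (for instance $(0,0,\ldots)$) $\epsilon$-traces every specification with any spacing $N$. Your monotonicity argument (reuse $N_1$ for $\epsilon>1$) is a valid additional justification.
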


\begin{theorem} \label{G has the CR-specification property}
	Every  $G \in \mathcal{F}$ has the CR-specification property.	
\end{theorem}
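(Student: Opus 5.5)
Fix $G\in\mathcal{F}$ and $1\ge\epsilon>0$. I build the tracing point backwards from the last segment, using descending interval trajectories. The spacing constant will be $N=N_{\delta,\gamma}$ from Lemma~\ref{altlemma}, with $\delta=\frac{\epsilon^2}{9}$ and $\gamma=\frac{\epsilon}{9}$. Lemma~\ref{altlemma} is stated for $F_{\{1/2,3,1\}}$. Every $G\in\mathcal{F}$ contains $F_{\{1/2,3,1\}}$, so $F_{\{1/2,3,1\}}^n(J)\subseteq G^n(J)$ for every interval $J$. Hence the conclusion $[\gamma,1]\subseteq G^n([a,b])$ holds for every $n\ge N$ whenever $b-a\ge\delta$; if the strict inequality matters, I shrink $\delta$ slightly.

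Let $\mathcal{S}=(\pi_{[k_1,l_1]}(\xx_1),\ldots,\pi_{[k_n,l_n]}(\xx_n))$ be an $N$-spaced CR-specification in $X_G^+$. For each $j$, the segment $(\xx_j(k_j),\ldots,\xx_j(l_j))$ is a finite trajectory under $G$. Theorem~\ref{chrisvanthm1} then gives a descending interval trajectory $([a^j_i,b^j_i])_{i=k_j}^{l_j}$ with three properties:
\begin{enumerate}
\item $[a^j_{k_j},b^j_{k_j}]\subseteq[\frac{\epsilon}{9},1]$;
\item $[a^j_i,b^j_i]\subseteq\overline{B}_\epsilon(\xx_j(i))$ for each $i$;
\item $\diam([a^j_{l_j},b^j_{l_j}])\ge\frac{\epsilon^2}{9}$.
\end{enumerate}

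Next I link consecutive segments. Since $k_{j+1}-l_j\ge N$, the extended form of Lemma~\ref{altlemma} gives
\[
[a^{j+1}_{k_{j+1}},b^{j+1}_{k_{j+1}}]\subseteq\left[\tfrac{\epsilon}{9},1\right]\subseteq G^{k_{j+1}-l_j}\bigl([a^j_{l_j},b^j_{l_j}]\bigr).
\]
The set $G^m(J)$ is, by definition, the set of endpoints of length-$m$ $G$-chains starting in $J$. So every point of the next starting interval is reached from the previous terminal interval by a $G$-chain of exactly the gap length. Concatenating, I obtain one long sequence of sets running from index $k_1$ to $l_n$. On each specification window it is the interval trajectory; on each gap it is the intermediate image sets. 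Each set in this sequence lies in the $G$-image of the previous one.

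Now I choose the tracing point backward. Pick $y(l_n)\in[a^n_{l_n},b^n_{l_n}]$. At each earlier index, pick a $G$-preimage lying in the previous set of the sequence; this exists because each set is contained in the $G$-image of its predecessor. Within a window, each inclusion $[a_{i+1},b_{i+1}]\subseteq G([a_i,b_i])$ supplies such a preimage. Across a gap, membership in $G^m$ of the earlier interval supplies an entire chain back to $[a^j_{l_j},b^j_{l_j}]$. This produces a finite chain $(y(k_1),\ldots,y(l_n))$ with $(y(i),y(i+1))\in G$ and $|y(i)-\xx_j(i)|\le\epsilon$ on each window.

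Finally I extend the chain to a point of $X_G^+$. At the front, for $i<k_1$, set $y(i)=y(k_1)$, using $(t,t)\in G$. At the back, for $i>l_n$, set $y(i+1)=y(i)/2$. The result $\yy$ $\epsilon$-traces $\mathcal{S}$, and $N$ depends only on $\epsilon$, so $G$ has the CR-specification property.

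The main obstacle is the gap step. I must make sure that Lemma~\ref{altlemma} really yields $[\gamma,1]\subseteq G^{m}(J)$ for every $m\ge N$, not just for $m=N$. The lemma gives this through the identity branch, which makes the images increase. I also need to justify passing from $F_{\{1/2,3,1\}}$ to a general $G\in\mathcal{F}$ through the containment $F_{\{1/2,3,1\}}\subseteq G$.
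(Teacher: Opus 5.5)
Your proposal is correct and is essentially the paper's own proof. You take $N=N_{\delta,\gamma}$ from Lemma~\ref{altlemma} with $\delta=\epsilon^2/9$ and transfer it to $G$ through $F_{\{1/2,3,1\}}\subseteq G$. You put the interval trajectories of Theorem~\ref{chrisvanthm1} on each window and bridge each gap by $[\frac{\epsilon}{9},1]\subseteq[\gamma,1]\subseteq G^{m}(J)$. Then you choose the tracing chain backwards from the last window and pad at both ends.

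The differences are cosmetic: you use $\gamma=\epsilon/9$ instead of $\epsilon/18$, and a halving tail instead of a constant one. Your explicit reduction of $\delta$ to meet the strict hypothesis $b-a>\delta$ is a point the paper passes over. Taking $\delta$ smaller also helps elsewhere; any positive constant depending only on $\epsilon$ will do. With its choice $a_0=y_0-\frac{2\epsilon}{9}$, the proof of Theorem~\ref{chrisvanthm1} in fact only guarantees a terminal diameter of at least $\frac{2\epsilon^2}{27}$. A smaller $\delta$ absorbs this as well.
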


\begin{proof} 
		
		Assume $G \in \mathcal{F}$.  Let $1 \ge \epsilon >0$, $\delta=\frac{\epsilon^2}{9}$,  and $\gamma = \frac{\epsilon}{18}$. Then by Lemma \ref{altlemma}, there is $N_{\delta,\gamma} \in \mathbb{N}$ such that if $[a,b]$ is an interval in $(0,1]$ with $\diam([a,b]) \ge \delta$, then $[\gamma,1] \subset F_{\{1/2,3,1\}}^n([a,b]) \subset G^n([a,b])$ for all $n \ge N_{\delta,\gamma}$.

	Suppose 
	
	$$ S = (\pi_{[k_1,l_1]}(\mathbf{x}_1),\pi_{[k_2,l_2]}(\mathbf{x}_2), \ldots, \pi_{[k_m,l_m]}(\mathbf{x}_m))$$ is an $N_{\delta,\gamma}$-spaced CR specification in $G$. We need to show that there is $\mathbf{y} \in I_G^+$ that $\epsilon$-traces $S$:
	
	Now each $\pi_{[k_i,l_i]}(\mathbf{x}_i)$ for $1 \le i \le m$ is a finite trajectory (of $\pi_{k_i}(\mathbf{x}_i)$). Then, applying Theorem \ref{chrisvanthm1}, there exists for each $1 \le i \le m$ a descending interval trajectory  under $G$ $$([a_{k_i}+j,b_{k_i}+j])_{j=0}^{l_i-k_i}$$  such that 
	
	\begin{enumerate}
		\item $[a_{k_i},b_{k_i}] \subset [\frac{\epsilon}{9},1]$,
		\item $[a_{k_i}+j,b_{k_i}+j] \subset \overline{B}_{\epsilon}(\pi_{{k_i+j}}(\mathbf{x}_i))$ for $0 \le j \le l_i-k_j$, and 
		\item $\diam([a_{l_i},b_{l_i}]) \ge \frac{\epsilon^2}{9}$.
	\end{enumerate}
	
	Because  $([a_{k_m}+j,b_{k_m}+j])_{j=0}^{l_m-k_m}$ is a descending interval trajectory under $G$ there is for each $y \in [a_{l_m},b_{l_m}]$ an orbit segment $\{y_{k_m}, y_{k_m +1} , y_{k_m +2}, \ldots , y_{k_{l_m}} \}$ under $G$ such that  $y=y_{k_l}$ and $y_{k_m}+j \in [a_{k_i}+j , b_{k_i}+j]$  for each  $0 \le j \le l_i-k_j$. Choose and fix $y_{k_l} $.

	Since $\diam([a_{l_{m-1}},b_{l_{m-1}}]) \ge \frac{\epsilon^2}{9}=\delta$ and $k_m - l_{m-1} \ge N_{\delta,\gamma}$  there is, according to the Lemma \ref{altlemma}, an orbit cycle $\{ y_{l_{m-1}} , y_{l_{m-1} +1} , y_{k_{m-1} + 2 }, \ldots , y_{k_{m}} \}$ under $G$ such that $y_{l_{m-1}} \in [a_{l_{m-1}},b_{k_{m-1}}] $ .

	Repeating this construction for each orbit segment in the specification $S$ we obtain an orbit segment under $G$ ,  
	$\{ y_{k_1} , y_{{k_1}+ 1} , y_{{k_1}+ 2} , \ldots , y_{k_{l_m}}  \}$ . If $k_1 > 0$ let $y(i)= y_{k_1}$ for each $ 0 \le i \le k_1$ and let $y_i = y_{k_{l_m}}$ for each $i \ge k_{l_m}$.  Then $(y_i) _{i=0}^\infty $ is an element of $I_G^+$ that $\epsilon$-traces $S$.

\end{proof}

\begin{theorem}\label{Theorem: three line has specification}
	For each $G \in \mathcal{F}$ both $I_G^+$ and $I_G$ are homeomorphic to the Lelek fan and both $(I_G^+,\sigma_G^+)$ and $(I_G,\sigma_G)$ have the specification property.
\end{theorem}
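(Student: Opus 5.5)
The plan is to split the statement into its topological part and its dynamical part, and in each part to handle the one-sided system $(I_G^+,\sigma_G^+)$ first and then transfer to the two-sided system $(I_G,\sigma_G)$.

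\emph{Topology.} Every $G\in\mathcal{F}$ is of the form $F_A$ with $A\in\mathcal{A}$. Any such $A$ is LF-inducing: label $\omega_1=\frac12$ and $\omega_2=3$. Then $\omega_1<1<\omega_2$, and $(\frac12,3)\in\mathcal{NC}$ because $2^{-k}=3^l$ forces $k=l=0$ by unique factorization. Consequently Theorem~\ref{Proposition: LF-inducing yields Lelek fan (one-sided)} shows that $I_G^+$ is a Lelek fan, and Theorem~\ref{Proposition: LF-inducing yields Lelek fan (two-sided)} shows that $I_G$ is a Lelek fan.

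\emph{Specification for $\sigma_G^+$.} By Theorem~\ref{G has the CR-specification property}, $I_G^+$ has the CR-specification property. Theorem~\ref{CR-spec} then says this is equivalent to $\sigma_G^+$ having the specification property.

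\emph{Specification for $\sigma_G$.} I would mimic the first half of the proof of Theorem~\ref{CR-spec} in the two-sided setting. Fix $\epsilon>0$ and choose $K$ with $2^{-K}<\epsilon$, so that two points of $I_G$ which are $\epsilon$-close in the coordinates $-K,\dots,K$ are within $\epsilon$ in $D$. Let $N$ be the CR-spacing constant given by Theorem~\ref{G has the CR-specification property} for $\epsilon$. Now take an $(N+2K)$-spaced specification $(\sigma_G^{[k_i,l_i]}(\xx_i))_{i}$ in $I_G$. Enlarge each segment to the coordinate window $[k_i-K,\,l_i+K]$ of $\xx_i$. After shifting all indices by a common constant so that they become positive, these enlarged windows form an $N$-spaced CR-specification, since consecutive windows still have gaps of at least $(N+2K)-2K=N$. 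Theorem~\ref{G has the CR-specification property} then gives a one-sided sequence $\yy$ that $\epsilon$-traces all the windows coordinatewise.

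It remains to extend $\yy$ to a bi-infinite element of $I_G$. This is where $1\in A$ is used: since $(t,t)\in G$ for every $t$, I can pad $\yy$ by constants. On the left I repeat its first coordinate indefinitely, and on the right its last coordinate is already constant by the construction in that proof; otherwise I pad it by constants as well. The resulting point $\tilde\yy\in I_G$ is $\epsilon$-close to $\xx_i$ in every coordinate within $K$ of the orbit window, and the tails beyond $K$ contribute at most $2^{-K}<\epsilon$. So $\sigma_G^j(\tilde\yy)$ is within $\epsilon$ of $\sigma_G^j(\xx_i)$ in $D$ for every $j\in[k_i,l_i]$, which gives the specification property for $\sigma_G$.

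The step I expect to be the main obstacle is pinning down the correct two-sided metric and the index bookkeeping in this transfer, in particular checking that the coordinate window $[-K,K]$ controls $D$. An alternative route would be to cite the correspondence in \cite{BE2} between the one-sided and two-sided shifts, analogous to the reference \cite[Theorem 3.9]{BE2} used earlier for mixing. The direct padding argument above, however, avoids needing such a result for specification.
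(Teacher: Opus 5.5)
Your proposal is correct. The topological part and the one-sided specification part are exactly the paper's argument: Theorems~\ref{Proposition: LF-inducing yields Lelek fan (one-sided)} and \ref{Proposition: LF-inducing yields Lelek fan (two-sided)} for the Lelek fan, then Theorems~\ref{G has the CR-specification property} and \ref{CR-spec} for $\sigma_G^+$.

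You differ only in the two-sided step. The paper simply invokes \cite[Theorem 3.19]{BEJK}, which asserts that $(I_G^+,\sigma_G^+)$ has the specification property if and only if $(I_G,\sigma_G)$ does. You instead prove the needed implication directly:
\begin{enumerate}
\item widen each orbit window to $[k_i-K,l_i+K]$, which the spacing $N+2K$ absorbs;
\item shift all indices to make them positive and apply the CR-specification property of $I_G^+$;
\item extend the resulting $\yy\in I_G^+$ to the left by constants, using $(t,t)\in G$.
\end{enumerate}
The bookkeeping checks out. With the two-sided metric $D(\xx,\yy)=\sup_{i\in\Z}|x_i-y_i|/2^{|i|}$, which is the one the paper implicitly uses, coordinatewise $\epsilon$-closeness on $[k_i-K,l_i+K]$ gives $D(\sigma_G^j(\tilde\yy),\sigma_G^j(\xx_i))\le\epsilon$ for every $j\in[k_i,l_i]$. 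The tails contribute at most $2^{-(K+1)}$. The widened windows are at least $N$ apart.

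Two small remarks. First, no padding on the right is needed, since $\yy\in I_G^+$ is already infinite in that direction. Second, the diagonal is not essential for the left extension: the line of slope $3$ alone gives every $t\in[0,1]$ the $G$-predecessor $t/3$.

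As for what each route buys: the paper's citation is a one-line reduction to an external one-sided/two-sided transfer result. Your argument is self-contained and makes explicit that, here, the coordinatewise (CR) form of specification plus left-extendability of one-sided $G$-orbits is all that is needed.
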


\begin{proof}
	Assume $G \in \mathcal{F}$. By Theorem \ref{Proposition: LF-inducing yields Lelek fan (one-sided)}, $I_G^+$ is a Lelek fan. By Theorem \ref{Proposition: LF-inducing yields Lelek fan (two-sided)}, $I_G$ is a Lelek fan. By Theorem \ref{CR-spec}, and Theorem \ref{G has the CR-specification property}, $(I_G^+,\sigma_G^+)$ has the specification property. By \cite[Theorem 3.19]{BEJK}, $(I_G^+,\sigma_G^+)$ has the specification property if and only if $(I_G,\sigma_G)$ has the specification property.
\end{proof}

\begin{theorem} The dynamical system $(I_{F_{\{1/2,3,1\}}}^+, \sigma_{F_{\{1/2,3,1\}}}^+)$ does not have the shadowing property.
	
\end{theorem}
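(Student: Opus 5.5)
The plan is to exploit the fact that the identity part of $F_{\{1/2,3,1\}}$ is ``separated'' from the other two lines in the multiplicative sense. A pseudo-orbit can drift slowly along the diagonal, but a true orbit staying away from $0$ cannot change its coordinate values at all without making a jump by a factor $1/2$ or $3$. We therefore argue by contradiction: fix $\e=\frac{1}{100}$, suppose a $\delta>0$ exists as in the definition of shadowing, and exhibit a $\delta$-pseudo-orbit that no true orbit $\e$-shadows.

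\textbf{Construction of the pseudo-orbit.} For $c\in[0,1]$ let $\overline{c}=(c,c,c,\ldots)$. Since $F_{\{1/2,3,1\}}$ contains the diagonal, $\overline{c}\in I^+_{F_{\{1/2,3,1\}}}$ and $\sigma^+_{F_{\{1/2,3,1\}}}(\overline{c})=\overline{c}$. Choose $\eta$ with $0<\eta<\delta$. Set $c_j=\min\{0.4+j\eta,\,0.6\}$ for $j\ge 0$ and $\xx_j=\overline{c_j}$. Then
\[
D\bigl(\sigma^+_{F_{\{1/2,3,1\}}}(\xx_j),\xx_{j+1}\bigr)=D(\overline{c_j},\overline{c_{j+1}})=\sup_{i\ge1}\frac{|c_j-c_{j+1}|}{2^i}=\frac{|c_j-c_{j+1}|}{2}<\delta,
\]
so $(\xx_j)$ is a $\delta$-pseudo-orbit.

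\textbf{Why no true orbit can shadow it.} Suppose $\yy\in I^+_{F_{\{1/2,3,1\}}}$ satisfies $D\bigl((\sigma^+_{F_{\{1/2,3,1\}}})^j(\yy),\xx_j\bigr)<\e$ for all $j\ge 0$. Looking only at the first coordinate term of $D$ gives
\[
\frac{|y_{j+1}-c_j|}{2}<\e,
\]
so $|y_{j+1}-c_j|<2\e=0.02$ and hence $y_{j+1}\in[0.38,0.62]$ for every $j\ge0$. Because $(y_{j+1},y_{j+2})\in F_{\{1/2,3,1\}}$ and $y_{j+1}>0$, the ratio $y_{j+2}/y_{j+1}$ lies in $\{1/2,1,3\}$. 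On the other hand, both values lie in $[0.38,0.62]$, so the ratio lies in $[0.38/0.62,\,0.62/0.38]\subset(1/2,3)$. Therefore the ratio is $1$, and $y_{j+1}=y_2$ for every $j\ge1$. Now take $j=0$, where $c_0=0.4$, and $j$ large, where $c_j=0.6$: this gives $|y_2-0.4|<0.02$ and $|y_2-0.6|<0.02$, which is impossible.

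Hence for $\e=\frac{1}{100}$ no $\delta>0$ works, and the shadowing property fails. The only delicate point is the ratio argument, which needs all shadowing coordinates to stay uniformly away from $0$. That is exactly why the drift is placed inside $[0.4,0.6]$ and $\e$ is chosen small enough that the window $[0.38,0.62]$ excludes the ratios $1/2$ and $3$. The remaining checks are immediate.
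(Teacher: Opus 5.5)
Your proof is correct and follows essentially the same route as the paper. Both arguments use a $\delta$-pseudo-orbit of constant sequences $\overline{c}$ drifting slowly along the diagonal, use the first-coordinate term of $D$ to keep the shadowing orbit's coordinates near the drifting values, and get a contradiction from a one-step constraint imposed by $F_{\{1/2,3,1\}}$.

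The only difference is that constraint. The paper drifts from $1/2$ up to $1$ and uses that above $1/3$ the branch $y=3x$ is unavailable, so the shadowing orbit is non-increasing. You instead confine the drift to $[0.4,0.6]$, so both non-identity branches leave the window and the orbit is forced to be constant. Note that your ratio argument at $j=0$ gives $y_1=y_2$, and that is what justifies $|y_2-0.4|<0.02$.
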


\begin{proof}
	Let $0<\e<1/12$, and let $\delta>0$ be arbitrary. Fix $n_0\in\N$ such that $1/(4n_0)<\delta$. Given $x\in[0,1]$, denote by $\overline{x}$ the sequence $(x,x,x,\ldots)\in I^+_{F_{\{1/2,3,1\}}}$. 
	
	Define a sequence $(\xx_k)_{k=0}^\infty$ as follows: for $0\leq k\leq n_0$, let $\xx_k=\overline{\frac{1}{2}+\frac{k}{2n_0}}$, and for $k>n_0$, let $\xx_k=\overline{1}$. Observe that this is a $\delta$-pseudo-orbit as when $0\leq k<n_0$,
	\[
	D\left[\sigma_{F_{\{1/2,3,1\}}}^+\left(\xx_k\right),\xx_{k+1}\right]=D\left[\xx_k,\xx_{k+1}\right]=\frac{1}{4n_0}<\delta,
	\]
	and for $k\geq n_0$, $\sigma_{F_{\{1/2,3,1\}}}^+(\xx_k)=\xx_{k+1}.$
	
	We show this $\delta$-pseudo-orbit cannot be $\e$-shadowed by any true orbit. Let $\zz\in I^+_{F_{\{1/2,3\}}}$ and suppose $D((\sigma_{F_{\{1/2,3\}}}^+)^n(\zz),\xx_n)<\e$ for all $n\geq0$. Since $\e<1/8$, we have
	\[
	\frac{1}{12}>D(\zz,\xx_0)=\sup_{j\in\N}\left\{\frac{\left|z_j-\frac{1}{2}\right|}{2^j}\right\},
	\]
	so for all $j\in\N$, $|z_j-1/2|<2^j/12$. In particular, $|z_1-1/2|<1/6$, so $z_1>1/3$. This means that $z_2\neq 3z_1$, so $z_2\leq z_1$.
	
	Likewise, since $D(\sigma_{F_{\{1/2,3,1\}}}^+(\zz),\xx_1)<\e$, we have that 
	\[
	\left|z_2-\left(\frac{1}{2}+\frac{1}{2n_0}\right)\right|<\frac{1}{6}.
	\]
	This again yields  $z_2>1/3$, and $z_3\leq z_2$. Continuing inductively, we have that $\zz$ is a non-increasing sequence. This is a contradiction since $|z_1-1/2|<2\e$ while for $j\geq n_0$, $|z_j-1|<2\e$.
	
	Therefore $(I^+_{F_{\{1/2,3,1\}}},\sigma_{F_{\{1/2,3,1\}}}^+)$ does not have the shadowing property.
\end{proof}

\subsection{$n$-lines examples} 

 What happens if we keep adding line segments to our relation? Previously we showed that the induced Mahavier products are still Lelek fans. In this subsection and the next section, we show that  (1) if we include a line segment $ax$ and its inverse $\frac{x}{a}$,  the associated shift maps have a dense set of periodic points, and (2) if the relation $F$ (consisting of line segments in $[0,1]^2$ containing the origin $(0,0)$) has the property that for some positive integer $M$, the diagonal is a subset of $F^n$ for all positive integers $n \ge M$, then the shift maps are topologically mixing.
 \\
 The following theorem is a generalization of \cite[Theorem 3.13]{BE2}.
 \begin{theorem} \label{Proposition: Eventual diagonal and transitive implies mixing}
 	Let $(X,F)$ be a CR-dynamical system. If $(X_F,\sigma_F)$ is topologically transitive and there exists a positive integer $M$ such that the diagonal $\Delta_X \subseteq F^n$ for all $n \ge M$, then $(X_F,\sigma_F)$ is topologically mixing. 
 	
 \end{theorem}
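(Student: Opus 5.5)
The plan is to reduce mixing to a statement about cylinder sets in $X_F$ and then use the diagonal condition to "pad" transitive orbit segments by any number of steps. Every nonempty open set in $X_F$ contains a basic open set of the form
\[
\langle U_{-k},\ldots,U_k\rangle=\{\xx\in X_F\colon x_i\in U_i \text{ for } |i|\le k\},
\]
with each $U_i$ open in $X$. So it suffices to show the following: for such basic sets $\mathcal U$ and $\mathcal V$, there is $n_0$ with $\sigma_F^n(\mathcal U)\cap\mathcal V\neq\emptyset$ for all $n\ge n_0$. By transitivity there are $\xx\in\mathcal U$ and $m\ge 0$ with $\sigma_F^m(\xx)\in\mathcal V$. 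Enlarging $k$ if necessary, and replacing $m$ by a larger transitivity time if needed, I will arrange $m> 2k$. Then the windows $[-k,k]$ and $[m-k,m+k]$ are disjoint, and the coordinates of $\xx$ in the gap $[k,m-k]$ can be altered freely, provided the relation $F$ is respected.

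The key step is the construction. For $n\ge m+M$, write $n=m+j$ with $j\ge M$. Since $(x_k,x_k)\in\Delta_X\subseteq F^j$, there is a chain $x_k=z_0,z_1,\ldots,z_j=x_k$ with $(z_i,z_{i+1})\in F$. I would define a new point $\yy$ by setting $y_i=x_i$ for $i\le k$, inserting the chain $z_1,\ldots,z_{j-1}$ next, then the point $x_k$ again, and continuing with $x_{k+1},x_{k+2},\ldots$ shifted by $j$. In other words, $y_i=x_{i-j}$ for $i\ge k+j$. Each consecutive pair lies in $F$, so $\yy\in X_F$. Moreover, $\yy\in\mathcal U$ because its coordinates in $[-k,k]$ agree with those of $\xx$. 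Finally, $\sigma_F^{n}(\yy)$ has coordinates $y_{n+i}=x_{m+i}$ for $|i|\le k$, since $n+i\ge m-k+j>k+j$; hence $\sigma_F^n(\yy)\in\mathcal V$. Taking $n_0=m+M$ therefore gives mixing.

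The index bookkeeping is routine, so the main obstacle is making sure the gap condition holds. Specifically, the transitivity time $m$ must be large enough that the $\mathcal V$-window lies entirely after the insertion point $k$. If transitivity only yields a small $m$, I would first apply $\sigma_F^{2k+1}$, using that $\sigma_F$ is a homeomorphism of $X_F$: I replace $\mathcal V$ by $\sigma_F^{2k+1}(\mathcal V)$, which is open and nonempty. This gives a transitivity time $m'$ with $m'+2k+1>2k$ for the original pair. Alternatively, I could splice at index $-k-1$ and insert the loop backward, which is also legitimate because $\Delta_X\subseteq F^j$ gives loops at every point. Either way, the diagonal hypothesis is exactly what permits delaying the orbit by every $j\ge M$, and that turns the single transitivity time into all sufficiently large times.
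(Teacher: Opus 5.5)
Your argument is correct and is essentially the paper's proof. You restrict to cylinder neighbourhoods, take one transitivity time $m$ with the two coordinate windows disjoint, and splice in a loop at $x_k$ of length $n-m\ge M$ supplied by $\Delta_X\subseteq F^{n-m}$; the paper's point $\mathbf z$ is exactly your $\yy$, with its $N$ playing the role of your $k$.

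You are more careful than the paper, which simply asserts ``without loss of generality $k\ge 2N$''. Two small corrections, though. First, the set to feed into transitivity is $\sigma_F^{-(2k+1)}(\mathcal V)$ (or, equivalently, replace $\mathcal U$ by $\sigma_F^{2k+1}(\mathcal U)$), not $\sigma_F^{2k+1}(\mathcal V)$, which would shorten the time rather than lengthen it. Second, your alternative of splicing a loop at index $-k-1$ leaves the offset between the two windows unchanged, so it cannot replace the shift trick.
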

 
 \begin{proof}
 	Suppose $(X_F,\sigma_F)$ is topologically transitive and there exists a positive integer $M$ such that $\Delta_X\se F^n$ for all $n\geq M$. Let $U,V\se X_F$ be non-empty open sets. There exists a positive integer $N$ and non-empty open sets $\tilde{U},\tilde{V}\se\bigstar_{i=-N}^{N-1} F$ such that $\pi^{-1}_{\{-N,\ldots,N\}}(\tilde{U})\se U$ and $\pi^{-1}_{\{-N,\ldots,N\}}(\tilde{V})\se V$. Since $(X_F,\sigma_F)$ is topologically transitive, there exists an integer $k\geq 1$ and $\mathbf{x}\in \pi^{-1}_{\{-N,\ldots,N\}}(\tilde{U})$ such that $\sigma_F^k(\mathbf{x})\in \pi^{-1}_{\{-N,\ldots,N\}}(\tilde{V})$. Without loss of generality, we may suppose $k\geq 2N$.
 	
 	Let $n\geq M+k$. Then $n-k\geq M$, so $\Delta_X\se F^{n-k}$. Then there is a finite sequence $(y_0,y_1,\ldots,y_{n-k})\in\bigstar_{i=0}^{n-k-1}F$ such that $y_{n-k}=y_0=x_N$. We may thus define a point $\mathbf{z}\in X_F$ by
 	\[
 	\mathbf{z}=\left(\ldots,x_{-N},\ldots,x_N,y_1,y_2,\ldots,y_{n-k},x_{N+1},x_{N+2},\ldots\right).
 	\]
 	Then we have $z_i=x_i$ for all $i\in\{-N,\ldots,N\}$ and $z_{i+(n-k)}=x_{i}$ for all $i\in\{-N+n,\ldots,N+n\}$, so we have 
 	\[
 	\mathbf{z}\in \pi^{-1}_{\{-N,\ldots,N\}}(\tilde{U})\se U,
 	\]
 	and
 	\[
 	\sigma^n\left(\mathbf{z}\right)=\sigma^k\left(\sigma^{n-k}\left(\mathbf{z}\right)\right)\in\pi^{-1}_{\{-N,\ldots,N\}}(\tilde{V})\se V.
 	\]
 	
 	Since this holds for all $n\geq M+k$, we get that $(X_F,\sigma_F)$ is topologically mixing.
 \end{proof}
 
 \begin{observation}
Since  	$(X_F,\sigma_F)$ is semi-conjugate to $(X_F^+,\sigma_F^+)$, previous theorem also hold for $(X_F^+,\sigma_F^+)$.
 \end{observation}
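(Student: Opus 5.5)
The statement to prove is the Observation: the conclusion of Theorem \ref{Proposition: Eventual diagonal and transitive implies mixing} also holds for $(X_F^+,\sigma_F^+)$. I read it as follows: if $(X_F^+,\sigma_F^+)$ is transitive and $\Delta_X\subseteq F^n$ for all $n\ge M$, then $(X_F^+,\sigma_F^+)$ is mixing.

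Two routes are available. The first transfers the two-sided result through the natural semi-conjugacy. The projection $p\colon X_F\to X_F^+$ given by $p(\mathbf{x})=(x_1,x_2,\ldots)$ is a continuous surjection, provided every point of $X_F^+$ extends backwards. Mixing passes to factors: if $U',V'\subseteq X_F^+$ are open and nonempty, then $p^{-1}(U')$ and $p^{-1}(V')$ are open and nonempty. Mixing of $\sigma_F$ then gives $\sigma_F^n(p^{-1}U')\cap p^{-1}V'\neq\emptyset$ for large $n$. Since $p\circ\sigma_F=\sigma_F^+\circ p$, this yields $(\sigma_F^+)^n(U')\cap V'\ne\emptyset$.

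This route needs two things. First, surjectivity of $p$: here $\Delta_X\subseteq F^M$ forces $F^{-1}(x)\neq\emptyset$ for every $x$, so backward extensions exist and $p$ is onto. Second, transitivity of $(X_F,\sigma_F)$ from transitivity of $(X_F^+,\sigma_F^+)$. This second step is the obstacle, and I would rather avoid it. The same remark applies to the $\Delta_X$ hypothesis for forward extensions: it gives $F(x)\neq\emptyset$ for all $x$.

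The cleaner route repeats the proof of Theorem \ref{Proposition: Eventual diagonal and transitive implies mixing} directly in $X_F^+$, with one-sided cylinders.

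First, given nonempty open $U,V\subseteq X_F^+$, I choose $N$ and nonempty open $\tilde U,\tilde V\subseteq \star_{i=1}^{N-1}F$ with $\pi_{[1,N]}^{-1}(\tilde U)\subseteq U$ and $\pi_{[1,N]}^{-1}(\tilde V)\subseteq V$.

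Second, transitivity gives $k\ge1$ and $\mathbf{x}\in\pi_{[1,N]}^{-1}(\tilde U)$ with $(\sigma_F^+)^k(\mathbf{x})\in\pi_{[1,N]}^{-1}(\tilde V)$. Enlarging $k$ so that $k\ge N$ is the delicate point. Transitivity on a space without isolated points yields infinitely many return times, and $X_F^+$ is a Lelek fan in our applications. In general I would instead insert the loop at coordinate $N$ of $\mathbf{x}$ only when $k\ge N$. If $k<N$, I would apply transitivity to the open set $\pi_{[1,N]}^{-1}(\tilde U)$ and the preimage $(\sigma_F^+)^{-N}$-cylinder of $\tilde V$ to obtain a return time of at least $N$.

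Third, for $n\ge M+k$, use $\Delta_X\subseteq F^{n-k}$ to pick a loop $y_0=x_N, y_1,\ldots,y_{n-k}=x_N$ with $(y_i,y_{i+1})\in F$. Define
\[
\mathbf{z}=(x_1,\ldots,x_N,y_1,\ldots,y_{n-k},x_{N+1},x_{N+2},\ldots)\in X_F^+ .
\]
Then $\mathbf{z}$ agrees with $\mathbf{x}$ on coordinates $1,\ldots,N$. Also, $(\sigma_F^+)^n(\mathbf{z})$ agrees on coordinates $1,\ldots,N$ with $(\sigma_F^+)^k(\mathbf{x})$, because from position $N+1$ onward $\mathbf{z}$ is $\mathbf{x}$ shifted by $n-k$, and $k\ge N$ guarantees the relevant window lies past the insertion.

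Hence $\mathbf{z}\in U$ and $(\sigma_F^+)^n(\mathbf{z})\in V$ for all $n\ge M+k$, which is mixing.
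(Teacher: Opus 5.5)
Your proof is correct, but your main argument is not the paper's. The paper's only justification is the one-line factor argument, which is your first route. The coordinate projection $p\colon X_F\to X_F^+$ satisfies $p\circ\sigma_F=\sigma_F^+\circ p$, and it is onto because $\Delta_X\subseteq F^M$ gives every point a predecessor. Mixing passes to factors. The paper applies this with transitivity assumed on the two-sided system: Corollary~\ref{Corollary: LF-transitive} gives transitivity of $(I_{F_\Omega},\sigma_{F_\Omega})$, the two-sided theorem gives mixing, and the one-sided conclusion is read off through the semi-conjugacy.

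The step you called an obstacle is routine here. Since $\sigma_F^+$ is onto, $(X_F,\sigma_F)$ is its natural extension. Any two non-empty open subsets of $X_F$ contain sets $\sigma_F^{m}(p^{-1}(W_1))$ and $\sigma_F^{m}(p^{-1}(W_2))$ with the same $m$ and non-empty open $W_1,W_2\subseteq X_F^+$. Moreover, $\sigma_F^j(p^{-1}(W_1))\cap p^{-1}(W_2)\neq\emptyset$ holds exactly when $(\sigma_F^+)^j(W_1)\cap W_2\neq\emptyset$. So transitivity lifts, and the factor argument also covers your reading, where the hypothesis is placed on $(X_F^+,\sigma_F^+)$.

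Your second route redoes the loop insertion directly with one-sided cylinders, and the details hold. Forward extendability ensures your cylinders have non-empty preimages. Surjectivity of $\sigma_F^+$ makes $(\sigma_F^+)^{-N}(\pi_{[1,N]}^{-1}(\tilde V))$ non-empty, so transitivity gives a return time $k\ge N$. Then $z_{n+i}=x_{k+i}$ for $1\le i\le N$; in fact $k\ge N-1$ already suffices.

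The trade-off: your version is self-contained and proves the one-sided statement with no appeal to natural extensions. The paper's is a one-liner, but it leaves implicit either that transitivity is assumed for $(X_F,\sigma_F)$ or the lifting fact above.
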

 
 \begin{theorem}\label{Theorem: F union F^{-1} has dense periodic}
 	Let $X$ be a compact metric space and let $F \subseteq X \times X$ be a non-empty closed relation. If $G=F \cup F^{-1}$, then the Mahavier dynamical system $(X_G, \sigma_G)$ has a dense set of periodic points. 
 \end{theorem}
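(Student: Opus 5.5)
The plan is to approximate an arbitrary point of $X_G$ by a periodic point built from a ``palindromic'' finite word. Since $G=F\cup F^{-1}$, we have $G^{-1}=G$, so any finite $G$-chain can be reversed and remains a $G$-chain. First I dispose of the degenerate case: if $X_G=\emptyset$ there is nothing to prove. Now fix $\mathbf{x}=(\ldots,x_{-1},x_0;x_1,\ldots)\in X_G$ and a basic open neighbourhood $U$ of $\mathbf{x}$. As in the proof of Theorem~\ref{Proposition: Eventual diagonal and transitive implies mixing}, we may choose $N\in\mathbb{N}$ such that every $\mathbf{z}\in X_G$ with $z_i=x_i$ for all $i\in\{-N,\ldots,N\}$ lies in $U$. (With the metric $D$ it is enough that the coordinates agree on a window $\{-N,\ldots,N\}$ with $2^{-N}$ small.) It therefore suffices to produce a periodic point $\mathbf{z}$ that agrees with $\mathbf{x}$ on this window.

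The key step is the construction. Consider the finite chain $(x_{-N},x_{-N+1},\ldots,x_N)$, in which $(x_i,x_{i+1})\in G$ for every $i$. Because $G$ is symmetric, the reversed chain $(x_N,x_{N-1},\ldots,x_{-N})$ is also a $G$-chain. Concatenating the two gives the closed loop
$$w=(x_{-N},x_{-N+1},\ldots,x_{N-1},x_N,x_{N-1},\ldots,x_{-N+1}),$$
which has length $p=4N$. Every consecutive pair of $w$ lies in $G$, and the wrap-around pair $(x_{-N+1},x_{-N})$ also lies in $G$, since it is the reverse of $(x_{-N},x_{-N+1})\in G$. Define $\mathbf{z}\in\prod_{i\in\mathbb{Z}}X$ as the bi-infinite periodic repetition of $w$, indexed so that $z_i=x_i$ for $-N\le i\le N$ and $z_{i+p}=z_i$ for all $i$. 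Then every pair $(z_i,z_{i+1})$ lies in $G$, so $\mathbf{z}\in X_G$, and $\sigma_G^{p}(\mathbf{z})=\mathbf{z}$. If $N=0$, I would instead use the loop $(x_0,x_1)$ of length $2$, which works because $(x_1,x_0)\in G$.

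I expect no real obstacle. The only points requiring care are the indexing of the two-sided sequence, so that the window $\{-N,\ldots,N\}$ is reproduced exactly, and checking the junction pairs at both ends of $w$ (at $x_N$ and at $x_{-N}$). Both follow immediately from the symmetry $G=G^{-1}$. Since $U$ was an arbitrary basic neighbourhood of an arbitrary point of $X_G$, the periodic points are dense. The same argument, applied to one-sided sequences, would also give the result for $X_G^+$.
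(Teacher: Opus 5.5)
Your proposal is correct and is essentially the paper's proof: the paper also builds $\mathbf{z}$ by alternating the block $(x_{-N},\ldots,x_{N-1})$ with its reversal $(x_N,x_{N-1},\ldots,x_{-(N-1)})$, which is exactly your palindromic loop $w$, and it likewise uses only the symmetry $G=G^{-1}$ to check the junction pairs. Your period $p=4N$ is the correct one; the paper's claim $\sigma_G^{2N}(\mathbf{z})=\mathbf{z}$ is a slip, since the repeating block has length $4N$.
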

 
 \begin{proof}
 Since $G=F\cup F^{-1}$, for all $a,b\in X$, it holds that $(a,b)\in G$ if and only if $(b,a)\in G$.	Let $\mathbf{x}\in X_G$. Choose $N\in\N$ such that $2^{-(N+1)}<\e$. Let $\mathbf{y}$ be the finite sequence $\mathbf{y}=(x_{-N},x_{-(N-1)},\ldots,x_{N-1})$, and let $\mathbf{\hat{y}}$ be the finite sequence $\mathbf{\hat{y}}=(x_N,x_{N-1},\ldots,x_{-(N-1)}$. We define the infinite sequence $\mathbf{z}$ by concatenating copies of $\mathbf{y}$ and $\mathbf{\hat{y}}$ in alternating order:
 \[
 \mathbf{z}=\cdots\mathbf{y}\mathbf{\hat{y}}\mathbf{y}\mathbf{\hat{y}}\mathbf{y}\mathbf{\hat{y}}\cdots
 \]
 with $z_i=x_i$ for all $i\in\{-N,\ldots,N\}$.
 
 For all $i\in\Z$, either $(z_i,z_{i+1})$ or $(z_{i+1},z_i)$ appears within the sequence $(x_{-N},\ldots,x_{N})$, so we have $(z_i,z_{i+1})\in G$. Thus $\mathbf{z}\in X_G$. From the construction, $\sigma_G^{2N}(\mathbf{z})=\mathbf{z}$, so it is periodic, and
 \[
 D\left(\mathbf{z},\mathbf{x}\right)<\frac{1}{2^{N+1}}<\e.
 \]
 Therefore $(X_G,\sigma_G)$ has a dense set of periodic points.
 \end{proof}

\begin{lemma}\label{Lemma: Open sets}
	Let $\Omega$ and $\Lambda$ be LF-inducing sets with $\Lambda\se\Omega$. If $U\se\bigstar_{i=1}^n F_{\Omega}$ is a non-empty  set that is open in $\bigstar_{i=1}^n F_{\Omega}$, then the projections
	\[
	\pi_{\{n+1,n+2\}}\left(U\star F_\Lambda\right)~~\text{ and }~~\pi_{\{0,1\}}\left(F_\Lambda\star U\right)
	\]
	contain non-empty sets that are open in $F_\Lambda$.
\end{lemma}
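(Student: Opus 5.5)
The plan is to use the fact that, away from the origin, the relation $F_\Omega$ is locally a disjoint union of graphs of homeomorphisms between intervals. Only near the origin do the lines accumulate.

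First I fix conventions. The product $\bigstar_{i=1}^n F_\Omega$ consists of tuples $(x_1,\ldots,x_{n+1})$, and $U\star F_\Lambda$ means the set of $(x_1,\ldots,x_{n+2})$ with $(x_1,\ldots,x_{n+1})\in U$ and $(x_{n+1},x_{n+2})\in F_\Lambda$. The symmetric convention, with indices shifted, applies to $F_\Lambda\star U$.

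Since $U$ is nonempty and open, and the points of $\bigstar_{i=1}^n F_\Omega$ with all coordinates positive are dense, I first pick $\xx\in U$ with $x_1,\ldots,x_{n+1}>0$. To justify density, note that any point of the product lies on an arc $t\mapsto (t,\omega_{a(1)}t,\ldots)$ through the origin (as in Lemma~\ref{Lemma: I_F^+ is a smooth fan}). Moving slightly inward along that arc gives positive coordinates, except at the origin itself, where we can move out along any arc.

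Next I use the local structure. Near $\xx$, every coordinate is positive, and the distinct lines $y=\omega x$ are separated away from $0$. So there is $\eta>0$ such that the set
\[
W=\{(t,\omega_{a(1)}t,\omega_{a(2)}\omega_{a(1)}t,\ldots):|t-x_1|<\eta\}\cap\bigstar_{i=1}^n F_\Omega
\]
is contained in $U$, where the $a(j)$ are the indices realised by $\xx$. This works because a small neighbourhood of $\xx$ in the product meets only this one branch. Consequently $\pi_{n+1}(U)$ contains an open interval $J=(x_{n+1}-\eta',x_{n+1}+\eta')\cap[0,1]$ around $x_{n+1}>0$. If $x_{n+1}=1$, the interval is one-sided but still has nonempty interior in $[0,1]$. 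Shrinking $\eta$ if necessary, I can take $J$ to be a nondegenerate interval $[c,d]$ with $0<c<d$.

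Now I produce the open set in $F_\Lambda$. Since $\omega_1\in\Lambda$ with $\omega_1<1$, the segment $\{(s,\omega_1 s):s\in J\}$ lies in $[0,1]^2$. Its points with $s$ in the interior of $J$ and $s>0$ form a relatively open subset $O$ of $F_\Lambda$. This holds because near a point $(s,\omega_1 s)$ with $s>0$, the set $F_\Lambda$ coincides with the single line $y=\omega_1 x$. Every $(s,\omega_1 s)\in O$ equals $\pi_{\{n+1,n+2\}}$ of the point of $W$ with last coordinate $s$, followed by $\omega_1 s$. Hence $O\subseteq \pi_{\{n+1,n+2\}}(U\star F_\Lambda)$.

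For $\pi_{\{0,1\}}(F_\Lambda\star U)$ I argue symmetrically. The projection $\pi_1(U)$ contains an interval $J'$ of positive reals around $x_1$. I use the pairs $(s/\omega_2,s)$, which lie on the line $y=\omega_2x$ of $F_\Lambda$ with $s/\omega_2\in[0,1]$ because $\omega_2>1$. Near such a point with $s>0$, $F_\Lambda$ is again a single segment, so these pairs with $s\in\operatorname{int}J'$ form a nonempty relatively open subset of $F_\Lambda$.

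I expect the main obstacle to be the local-structure claim: that near a point with positive coordinates, both $\bigstar_{i=1}^n F_\Omega$ and $F_\Lambda$ are single arcs. This is what ensures that the projection of $U$ contains an interval and that segments of a line form open sets. It needs the observation that distinct lines through the origin meet only at $(0,0)$, together with the reduction to a point of $U$ with all coordinates positive.
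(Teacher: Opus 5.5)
Your proof is correct and follows the paper's route. You show that $\pi_{n+1}(U)$ (resp.\ $\pi_1(U)$) contains a nondegenerate interval of positive reals, then take the portion of a line of $F_\Lambda$ over it, which is relatively open in $F_\Lambda$ because distinct lines through the origin are separated away from $0$. You supply the details the paper's two-line proof omits: avoiding the origin, using a slope below $1$ (resp.\ above $1$) to stay in $[0,1]^2$, and the symmetric case.

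Two small slips, neither affecting validity. ``$\omega_1\in\Lambda$'' should read ``the element of $\Lambda$ below $1$'', since $\Lambda\subseteq\Omega$ need not contain $\Omega$'s $\omega_1$. Also, the interval in $\pi_{n+1}(U)$ can be one-sided at $x_{n+1}$ even when $x_{n+1}<1$, namely when an earlier coordinate equals $1$; this is harmless because you only use a subinterval $[c,d]$.
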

\begin{proof}
	Let $U\se\bigstar_{i=1}^n F_{\Omega}$ be a non-empty  set, open in $\bigstar_{i=1}^n F_{\Omega}$. Since $F_{\Omega}$ contains no vertical or horizontal lines, we have $\pi_{n+1}(U)$ is a non-degenerate (not necessarily open) interval. Thus $\pi_{n+1}(U)$ contains an open interval $(a,b)$. This implies that $\pi_{\{n+1,n+2\}}(U\star F_\Lambda)$ contains a non-empty open set.
\end{proof}

\begin{prop}\label{Proposition: LF-transitive/mixing inherited from sub-relations}
	Let $\Omega$ and $\Lambda$ be LF-inducing sets, and suppose $\Lambda\se\Omega$. 
	\begin{enumerate}
		\item\label{LF-transitive/mixing 1} If $(I_{F_\Lambda},\sigma_{F_\Lambda})$ is topologically transitive, then $(I_{F_\Omega},\sigma_{F_\Omega})$ is topologically transitive.
		\item\label{LF-transitive/mixing 2} If $(I_{F_\Lambda},\sigma_{F_\Lambda})$ is topologically mixing, then $(I_{F_\Omega},\sigma_{F_\Omega})$ is topologically mixing.
	\end{enumerate}
	
\end{prop}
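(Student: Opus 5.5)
The idea is to reduce both statements to cylinder sets of the subsystem and transport transitivity or mixing through them, using the fact that $F_\Lambda\se F_\Omega$ implies $I_{F_\Lambda}\se I_{F_\Omega}$.

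Fix non-empty open sets $U,V\se I_{F_\Omega}$. First we shrink them to basic cylinders. There is an $N$ and non-empty sets $\tilde U,\tilde V$, open in $\bigstar_{i=-N}^{N-1}F_\Omega$, with $\pi^{-1}_{\{-N,\ldots,N\}}(\tilde U)\se U$ and $\pi^{-1}_{\{-N,\ldots,N\}}(\tilde V)\se V$.

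The main step is to find "exit" and "entry" pieces of $F_\Lambda$ attached to these cylinders. Applying Lemma~\ref{Lemma: Open sets} (after shifting indices), the set $\pi_{\{N,N+1\}}(\tilde U\star F_\Lambda)$ contains a non-empty set $W_1$ open in $F_\Lambda$. Likewise $\pi_{\{-N-1,-N\}}(F_\Lambda\star\tilde V)$ contains a non-empty set $W_2$ open in $F_\Lambda$. Then $U'=\{\xx\in I_{F_\Lambda}:(x_0,x_1)\in W_1\}$ and $V'=\{\xx\in I_{F_\Lambda}:(x_0,x_1)\in W_2\}$ are open in $I_{F_\Lambda}$.

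We must check that $U'$ and $V'$ are non-empty. Every point of $F_\Lambda$ extends to a full two-sided orbit in $I_{F_\Lambda}$. This holds because $\Lambda$ contains some $\omega_1<1<\omega_2$, so each $x\in I$ has a successor and a predecessor: $\omega_1x\in I$ and $x/\omega_2\in I$.

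Now apply the hypothesis to $U'$ and $V'$. In case (1), there is a $k\ge1$ with $\sigma_{F_\Lambda}^k(U')\cap V'\neq\emptyset$. In case (2), there is a $k_0$ such that this holds for all $k\ge k_0$. Fix such a $k$ and a witness $\yy\in U'$ with $\sigma^k(\yy)\in V'$.

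Next we splice. Choose $(u_{-N},\ldots,u_N)\in\tilde U$ with $(u_N,u_{N+1})=(y_0,y_1)$; this is possible by the definition of $W_1$. Choose $(v_{-N},\ldots,v_N)\in\tilde V$ with $(v_{-N-1},v_{-N})=(y_k,y_{k+1})$. Define $\zz\in I^{\Z}$ by placing the $u$-block at indices $-N,\ldots,N$, then $y_1,\ldots,y_k$, then the $v$-block. Extend to the left by a backward $F_\Omega$-orbit of $u_{-N}$ and to the right by a forward orbit of $v_N$, both of which exist as noted above.

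Each consecutive pair of $\zz$ lies in $F_\Omega$, since $F_\Lambda\se F_\Omega$. Hence $\zz\in U$. Moreover, the $v$-block begins at index $N+k+1$, which is $2N+k+1$ steps after index $-N$, so $\sigma_{F_\Omega}^{2N+k+1}(\zz)\in V$.

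This gives (1) at once. For (2), $k$ ranges over all integers $\ge k_0$, so $\sigma_{F_\Omega}^n(U)\cap V\ne\emptyset$ for every $n\ge 2N+k_0+1$, which is mixing.

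The main obstacle is the index bookkeeping at the two junctions. The last pair of the $u$-block, $(u_N,y_1)=(y_0,y_1)$, must be in $F_\Lambda$. The pair $(y_k,v_{-N})=(y_k,y_{k+1})$ must also be in $F_\Lambda$. Both are arranged by matching pairs of $F_\Lambda$ rather than single coordinates; this is exactly why Lemma~\ref{Lemma: Open sets} is stated for the projection onto two coordinates.
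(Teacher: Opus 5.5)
Your proposal is correct and follows essentially the same route as the paper. Both shrink $U,V$ to cylinders, use Lemma~\ref{Lemma: Open sets} to get pieces $W_1,W_2$ open in $F_\Lambda$ at the exit and entry junctions, and apply transitivity (or mixing) of $\sigma_{F_\Lambda}$ to get a matching $F_\Lambda$-orbit, which is spliced between the two blocks to land in $V$ after $2N+k+1$ steps. Your version is somewhat more careful than the paper's: you check that the cylinders $U',V'\subseteq I_{F_\Lambda}$ are non-empty, and that the spliced finite block extends to a bi-infinite element of $I_{F_\Omega}$ via $\omega_1<1<\omega_2$, points the paper leaves implicit.
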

\begin{proof}
	We begin by proving \eqref{LF-transitive/mixing 1}. Suppose $(I_{F_\Lambda},\sigma_{F_\Lambda})$ is topologically transitive, and let $U,V\se I_{F_{\Omega}}$ be non-empty open sets. Then there exists $N\in\N$ and non-empty open sets $\tilde{U},\tilde{V}\se\bigstar_{i=-N}^NF_\Omega$ such that $\pi^{-1}_{\{-N,\ldots,N+1\}}(\tilde{U})\se U$ and $\pi^{-1}_{\{-N,\ldots,N+1\}}(\tilde{V})\se V$.
	
	By Lemma~\ref{Lemma: Open sets}, there are sets $W_1,W_2$ that are open in $F_\Lambda$ with 
	\begin{align*}
		W_1&\se\pi_{\{N+1,N+2\}}
		\left(
		\tilde{U}\star F_\Lambda
		\right)\\
		W_2&\se\pi_{\{-(N+1),-N\}}
		\left(
		F_\Lambda\star\tilde{V}
		\right).
	\end{align*}
	Since$(I_{F_\Lambda},\sigma_{F_\Lambda})$ s topologically transitive, there exists $\mathbf{x}\in I_{F_\Lambda}$ and an integer $k\geq 2$ such that $(x_{N+1},x_{N+2})\in W_1$ and $(x_{N+1+k,N+2+k})\in W_2$.
	
	Since 
	\[
	W_1\se\pi_{\{N+1,N+2\}}
	\left(
	\tilde{U}\star F_\Lambda
	\right),
	\]
	there exists $(y_{-N},y_{-(N-1)},\ldots,y_N,y_{N+1},y_{N+2})\in\tilde{U}\star F_{\Lambda}$ such that $(y_{N+1},y_{N+2})=(x_{N+1},x_{N+2})$, and since
	\[
	W_2\se\pi_{\{-(N+1),-N\}}
	\left(
	F_\Lambda\star\tilde{V}
	\right),
	\]
	there exists $(z_{-(N+1)},z_{-N},\ldots,z_{N+1})\in F_\Lambda\star\tilde{V}$ such that $(z_{-(N+1)},z_{-N})=(x_{N+1+n,N+2+n})$. Then we have that
	\[
	\left(y_{-N},\ldots,y_N,x_{N+1},x_{N+2},\ldots,x_{N+1+k},x_{N+2+k},z_{-(N-1)},z_{-(N-2)},\ldots,z_{N}\right)\in\tilde{U}\star\left(\overset{N+1+n}{\underset{i=N+1}{\bigstar}}F_\Lambda\right)\star\tilde{V}.
	\]
	
	In particular, there exists $\mathbf{w}\in U$ such that $\sigma_{F_\Omega}^{2N+1+k}(\mathbf{w})\in V$. Therefore $(I_{F_\Omega},\sigma_{F_\Omega})$ is topologically transitive. 
	
	The proof of \eqref{LF-transitive/mixing 2} is nearly identical. The one difference is that if $(I_{F_\Lambda},\sigma_{\Lambda})$ is topologically mixing, then for any sufficiently large $k$ we may choose $\mathbf{x}\in I_{F_\Lambda}$ so that
	$(x_{N+1},x_{N+2})\in W_1$ and $(x_{N+1+k}x_{N+2+k})\in W_2$. With that minor adjustment, the rest of the proof remains the same to show  that if $(I_{F_\Lambda},\sigma_{F_\Lambda})$ is topologically mixing, then $(I_{F_\Omega},\sigma_{F_\Omega})$ is topologically mixing.
\end{proof}

There is a topological semi-conjugacy from $(I_{F_\Omega},\sigma_{F_\Omega})$ to $(I^+_{F_\Omega},\sigma^+_{F_\Omega})$, so if we conclude that the shift on the two-sided Mahavier product is topologically transitive (mixing), then it follows that the shift on the one-sided Mahavier product is topologically transitive (mixing) as well.

Additionally, it is shown in \cite[Theorem 4.3]{banic2} that for any pair of positive real numbers $(\omega_1,\omega_2)$ that never connect, the Mahavier dynamical system $(I_{F_{\{\omega_1,\omega_2\}}},\sigma_{F_{\{\omega_1,\omega_2\}}})$ is topologically transitive. From Definition~\ref{Definition: LF-inducing}, for any LF-inducing set $\Omega$, the pair $(\omega_1,\omega_2) \in \mathcal{NC}$. The following corollary then follows.

\begin{corollary}\label{Corollary: LF-transitive}
	Let $\Omega$ be an LF-inducing set. Then $(I_{F_\Omega},\sigma_{F_\Omega})$ and $(I^+_{F_\Omega},\sigma^+_{F_\Omega})$ are topologically transitive.
\end{corollary}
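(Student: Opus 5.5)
The corollary follows by chaining three earlier facts, so the proof will be a short assembly rather than a new argument.

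First, let $\Omega=\{\omega_1,\ldots,\omega_M\}$ be LF-inducing and set $\Lambda=\{\omega_1,\omega_2\}$. By Definition~\ref{Definition: LF-inducing}, $(\omega_1,\omega_2)\in\mathcal{NC}$ and $\omega_1<1<\omega_2$. Hence $\Lambda$ is itself an LF-inducing set (with $M=2$), and $\Lambda\subseteq\Omega$. By \cite[Theorem 4.3]{banic2}, the two-sided system $(I_{F_\Lambda},\sigma_{F_\Lambda})$ is topologically transitive. One small check is needed here: the relation in \cite{banic2} must be $F_{\{\omega_1,\omega_2\}}$ as defined in this paper. That is, it is the union of the two line segments clipped to $I^2$, which is exactly $L_{\omega_1,\omega_2}$.

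Second, apply part~\eqref{LF-transitive/mixing 1} of Proposition~\ref{Proposition: LF-transitive/mixing inherited from sub-relations} to the pair $\Lambda\subseteq\Omega$. This gives that $(I_{F_\Omega},\sigma_{F_\Omega})$ is topologically transitive. When $M=2$ we have $\Omega=\Lambda$, and this step is trivial.

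Third, pass to the one-sided system via the semi-conjugacy noted before the corollary. Define $\pi\colon I_{F_\Omega}\to I^+_{F_\Omega}$ by $\pi(\ldots,x_0;x_1,x_2,\ldots)=(x_1,x_2,\ldots)$. It is continuous and satisfies $\pi\circ\sigma_{F_\Omega}=\sigma^+_{F_\Omega}\circ\pi$.

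It is also surjective, and this is the only point that needs an argument. Given $(x_1,x_2,\ldots)\in I^+_{F_\Omega}$, the backward extension is always possible because $(0,0)\in F_\Omega$ and $F_\Omega$ contains the segment $y=\omega_2x$. We can take $x_0=x_1/\omega_2\in[0,1]$, then $x_{-1}=x_0/\omega_2$, and so on. Each step satisfies $(x_{j},x_{j+1})\in F_\Omega$ since $x_{j+1}=\omega_2x_j\le1$.

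A factor of a transitive system under a continuous surjective semi-conjugacy is transitive. Indeed, for nonempty open $U,V\subseteq I^+_{F_\Omega}$, the preimages $\pi^{-1}(U)$ and $\pi^{-1}(V)$ are nonempty and open, so some $n$ satisfies $\sigma_{F_\Omega}^n(\pi^{-1}(U))\cap\pi^{-1}(V)\neq\emptyset$. Applying $\pi$ gives $(\sigma^+_{F_\Omega})^n(U)\cap V\neq\emptyset$. Hence $(I^+_{F_\Omega},\sigma^+_{F_\Omega})$ is topologically transitive.

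I expect no real obstacle. The only care needed is in the surjectivity of $\pi$ and in matching the hypothesis of \cite[Theorem 4.3]{banic2} to $F_\Lambda$.
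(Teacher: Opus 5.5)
Your proposal is correct and follows the same route as the paper. It takes transitivity of the two-line system $(I_{F_{\{\omega_1,\omega_2\}}},\sigma_{F_{\{\omega_1,\omega_2\}}})$ from \cite[Theorem 4.3]{banic2}, passes to $F_\Omega$ via part (1) of Proposition~\ref{Proposition: LF-transitive/mixing inherited from sub-relations}, and obtains the one-sided system through the semi-conjugacy; you also correctly supply two details the paper leaves implicit, namely that $\{\omega_1,\omega_2\}$ is itself LF-inducing and that the coordinate projection is onto because every point of $I^+_{F_\Omega}$ extends backward along $y=\omega_2x$.
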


\begin{prop}\label{Proposition: LF-inducing mixing}
	Let $\Omega=\left\{\omega_1,\ldots,\omega_n\right\}$ be an LF-inducing set. For each positive integer $M$, define the set
	\[
	\mathcal{A}(M)=\left\{\prod_{i=1}^n\omega_i^{m_i}\colon \forall i\in\{1,\ldots,n\},~m_i\geq0,\text{ and }\sum_{i=1}^nm_i=M\right\}.
	\]
	If there exists a positive integer $K$ such that for every integer $M\geq K$, $1\in\mathcal{A}(M)$, then $(I_{F_\Omega},\sigma_{F_\Omega})$ is topologically mixing.
\end{prop}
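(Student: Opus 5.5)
The plan is to reduce the statement to Theorem~\ref{Proposition: Eventual diagonal and transitive implies mixing}. By Corollary~\ref{Corollary: LF-transitive}, $(I_{F_\Omega},\sigma_{F_\Omega})$ is topologically transitive. So it suffices to show that the diagonal $\Delta_I=\{(t,t):t\in[0,1]\}$ is contained in $F_\Omega^M$ for every $M\ge K$. Theorem~\ref{Proposition: Eventual diagonal and transitive implies mixing} then gives mixing of the two-sided system with this $K$ in place of its $M$. The one-sided system $(I^+_{F_\Omega},\sigma^+_{F_\Omega})$ is then mixing as well, through the semi-conjugacy remarked after Proposition~\ref{Proposition: LF-transitive/mixing inherited from sub-relations}.

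The key step is the inclusion $\Delta_I\subseteq F_\Omega^M$ for each $M\ge K$. Fix such an $M$. Since $1\in\mathcal{A}(M)$, there are exponents $m_1,\dots,m_n\ge 0$ with $\sum m_i=M$ and $\prod_i\omega_i^{m_i}=1$. Given $t\in[0,1]$, I want a chain $t=y_0,y_1,\dots,y_M=t$ with $(y_j,y_{j+1})\in F_\Omega$. This means $y_{j+1}=\omega_{a(j)}y_j$, where the multiset $\{a(0),\dots,a(M-1)\}$ uses index $i$ exactly $m_i$ times. The product telescopes to $y_M=t$. The only constraint is that every $y_j$ stays in $[0,1]$. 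If $t=0$ the constant chain works, so assume $t>0$.

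The main obstacle is keeping the intermediate products in $[0,1]$, i.e., choosing the order of the factors. I would order them greedily: apply factors $\omega_i\le 1$ whenever the current partial product is $>$ some threshold, and factors $>1$ otherwise. Equivalently, I would arrange the cyclic order so that all partial products $P_j=\prod_{l<j}\omega_{a(l)}$ satisfy $P_j\le 1$.

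This can be done cleanly by a rotation argument. Arrange the $M$ factors in any order and take logarithms, giving a closed walk $s_j=\sum_{l<j}\log\omega_{a(l)}$ with $s_M=0$. Rotate the cyclic sequence to start at an index $j^*$ where $s_{j^*}$ is maximal. Then every partial sum of the rotated sequence is $\le 0$. So all partial products satisfy $P_j\le 1$, hence $y_j=P_jt\le t\le 1$, and $y_j\ge 0$ trivially. Thus $(t,t)\in F_\Omega^M$.

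This rotation (cycle lemma) step is where care is needed. It replaces any delicate threshold bookkeeping, and it works uniformly in $t$, which is exactly what the inclusion $\Delta_I\subseteq F_\Omega^M$ demands.
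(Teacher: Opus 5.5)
Your proposal is correct and follows the paper's route: transitivity from Corollary~\ref{Corollary: LF-transitive}, plus $\Delta_I\subseteq F_\Omega^M$ for all $M\ge K$, fed into Theorem~\ref{Proposition: Eventual diagonal and transitive implies mixing}. The paper simply asserts the diagonal inclusion, so your rotation argument keeping all partial products $\le 1$ fills that step in; sorting the factors in increasing order, as the paper does in Proposition~\ref{Proposition:Contains diagonal means no shadowing}, would work equally well.
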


\begin{proof}
	If $1\in\mathcal{A}(M)$, then $\Delta_I\se F_\Omega^M$, so the result follows from Proposition~\ref{Proposition: Eventual diagonal and transitive implies mixing}.
\end{proof}

\begin{prop}\label{Proposition:Contains diagonal means no shadowing}
	Let $\Omega$ be an LF-inducing set. If there exists a positive integer $M$ such that 1 is an element of the set
	\[
	\mathcal{A}(M)=\left\{\prod_{i=1}^n\omega_i^{m_i}\colon \forall i\in\{1,\ldots,n\},~m_i\geq0,\text{ and }\sum_{i=1}^nm_i=M\right\},
	\]
	then $(I_{F_\Omega},\sigma_{F_\Omega})$ and $(I^+_{F_\Omega},\sigma^+_{F_\Omega})$ do not have shadowing.
\end{prop}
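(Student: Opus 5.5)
The plan is to generalize the argument used above for $F_{\{1/2,3,1\}}$: build a $\delta$-pseudo-orbit that drifts slowly and monotonically along ``diagonal'' orbits, and show that no true orbit can follow it. The obstruction comes from the multiplicative rigidity of $F_\Omega$. Every true orbit satisfies $x_{j+1}=\omega x_j$ with $\omega\in\Omega$, a finite set. So once $x_j$ is bounded away from $0$, the ratios $x_{j+1}/x_j$ lie in a finite set $\Omega$.

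Fix a word $\omega_{i_1}\cdots\omega_{i_M}$ with product $1$, which exists by hypothesis. Choose $t_0>0$ so that the periodic orbit $\mathbf{p}(t)$ obtained by repeating this word from initial value $t$ stays in $[0,1]$ for all $t\in[t_0,2t_0]$. Let $\mu$ be the largest value of $\omega_{i_1}\cdots\omega_{i_j}$ over partial products $1\le j\le M$. Then we need $2t_0\mu\le1$, and all coordinates lie in $[t_0\mu',2t_0\mu]$, where $\mu'$ is the smallest partial product. Define the pseudo-orbit by taking $\sigma$-iterates of $\mathbf{p}(t)$, but after each full period increase $t$ slightly, by steps of size about $\delta$. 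Going from $t_0$ to $2t_0$ takes finitely many steps. Each jump changes every coordinate by at most a constant times the step, so for a small step the sequence is a $\delta$-pseudo-orbit, exactly as with $\xx_k=\overline{1/2+k/(2n_0)}$ earlier. The one-sided and two-sided cases are handled the same way, with the two-sided pseudo-orbit taken to be constant in the past.

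Now fix $\e$ small compared with $t_0\mu'$ and with the gaps between distinct values in $\{\omega\,s : \omega\in\Omega,\ s \text{ a coordinate value}\}$. Suppose $\zz$ $\e$-shadows the pseudo-orbit. The key step, which I expect to be the main obstacle, is to force the true orbit to use, after each block of $M$ steps, a net multiplier that is a product from $\mathcal{A}(M)$ close to $1$. Then I would show this product is exactly $1$ whenever $\e$ is small. The reason is that the coordinates of $\zz$ must stay within $2\e$ of $\mathbf{p}(t)$ along the pseudo-orbit, and both are bounded below by $t_0\mu'/2$. Hence each ratio $z_{j+1}/z_j$ is within $O(\e/t_0)$ of the ratio for $\mathbf{p}$. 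Because $\Omega$ is finite, the ratio must therefore be exactly the intended $\omega_{i_j}$.

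If some word realizes the same ratios but uses a different letter with equal value, that is harmless, since only the value matters. Consequently $z_{j+M}=z_j$ for all $j$, so the coordinates of $\zz$ are periodic. But the pseudo-orbit's first coordinate moves from $t_0$ to $2t_0$, while $z$ must stay within $2\e$ of it throughout, which is impossible once $2\e<t_0/2$. This contradiction shows shadowing fails for the two-sided system.

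For the one-sided system the same argument applies verbatim, using only the nonnegative coordinates. Alternatively one can note that the argument never used negative indices. The remaining technical care is converting $D$-closeness into closeness of the first few coordinates, which uses the factor $2$ as in the three-line proof. It also requires choosing $\e$ so that the closeness propagates to coordinate $2$ of each shifted point, which is enough to pin down each ratio $z_{j+1}/z_j$.
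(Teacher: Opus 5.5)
Your proposal is correct and is essentially the paper's argument: you concatenate $M$-periodic orbits of a product-one word whose base value drifts in small steps, and then use the fact that, away from $0$, the distinct lines through the origin are uniformly separated, so any shadowing orbit is forced to be exactly $M$-periodic, contradicting the drift. The differences are only cosmetic. The paper keeps coordinates in $[0,1]$ by ordering the word so that all partial products are at most $1$, and it uses isolation of the diagonal in $F_\Omega^M$ at the block times; you instead rescale the base value and pin down every one-step ratio using the finiteness of $\Omega$. (In the two-sided case simply use the bi-infinite periodic points $\mathbf{p}(t)$ themselves, since their negative coordinates cannot be literally constant unless $1\in\Omega$.)
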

\begin{proof}
	We show that $(I_{F_\Omega},\sigma_{F_\Omega})$ does not have the shadowing property. The proof for $(I^+_{F_\Omega},\sigma^+_{F_\Omega})$ is essentially identical.
	
	Suppose $M\in\N$ and $1\in\mathcal{A}(M)$. Then we may choose a finite sequence $(\omega_{j_1},\omega_{j_2},\ldots,\omega_{j_M})$ such that 
	$
	\prod_{i=1}^M\omega_{j_i}=1.
	$
	Without loss of generality, we may suppose $\omega_{j_1}\leq\omega_{j_2}\leq\cdots\leq\omega_{j_M}$. This implies that for all $n\in\{1,\ldots,M\}$, 
	\begin{equation}
		\prod_{i=1}^n\omega_{j_i}\leq 1.\label{Eqn: omega products bounded by 1}
	\end{equation}

	Fix $a\in (0,1)$ and $\rho>0$ such that if $(x,y)\in F_\Omega^M$ and $\max\{|x-a|,|y-a|\}<\rho$, then $y=x$. Choose $\e>0$ with $\e<\min\{\rho/2^M,(1-a)/2\}$. Let $\delta>0$. There is a finite, increasing sequence $(z_0,z_2,\ldots,z_k)$ such that $z_0=a$, $z_k=1$, and for all $i\in\{0,\ldots,k-1\}$, $|z_{i+1}-z_i|=z_{i+1}-z_i<\delta$.
	
	For each $l\in\{0,\ldots,k\}$ define $\hat{z}^l_0=z_l$ and for each $n\in\{1,\ldots,M\}$, define
	\[
	\hat{z}^l_n=\left(\prod_{i=1}^n\omega_{j_i}\right)z_l.
	\]
	(Note that $\hat{z}^l_M=\hat{z}^l_0=z_l$.) From Inequality~\ref{Eqn: omega products bounded by 1}, we have that for all $l\in\{0,\ldots,k\}$ and $n\in\{1,\ldots,M\}$,
	\begin{equation}
		\left|\hat{z}^l_n-\hat{z}^{l+1}_n\right|<\left|z_l-z_{l+1}\right|<\delta\label{Eqn: z-hat-inequality}
	\end{equation}
	
	For each integer $l\geq 0$, we define a point $\mathbf{x}(l)=(x^l_i)_{i\in\Z}$ in $I_{F_\Omega}$ as follows:
	\begin{itemize}
		\item If $l=tM$ for some $t\in\{0,\ldots,k\}$, we set $x^{l}_i=\hat{z}^t_{i\text{ (mod }M\text{)}}$ for all $i\in\Z$.
		\item Given $t\in\{0,\ldots,k-1\}$, if $l\in\{tM+1,\ldots,(t+1)M-1\}$, we set $\mathbf{x}(l)=\sigma(\mathbf{x}(l-1))$.
		\item For $l>k$, we set $\mathbf{x}(l)=\sigma\left(\mathbf{x}(l-1)\right)$.
	\end{itemize} 
	We claim that $(\mathbf{x}(l))_{l=0}^\infty$ is a $\delta$-pseudo-orbit for $(I_{F_\Omega},\sigma_{F_\Omega})$. For any $l\notin\{M,2M,\ldots,kM\}$, we have $\sigma(\mathbf{x}(l-1))=\mathbf{x}(l)$, so we need only to check that $D(\sigma(\mathbf{x}(l)),\mathbf{x}(l+1))<\delta$ if $l=tM-1$ for some $t\in\{1,\ldots,k\}$.
	
	Let $t\in\{1,\ldots,k\}$, and let $l=tM-1$. Then we get
	\begin{align*}
		D\left[\sigma\left(\mathbf{x}(l)\right),\mathbf{x}(l+1)\right]&=D\left[\sigma\left(\mathbf{x}(tM-1)\right),\mathbf{x}(tM)\right]\\
		&=D\left[\sigma^M\left(\mathbf{x}((t-1)M)\right),\mathbf{x}(tM)\right]\\
		&=D\left[\mathbf{x}((t-1)M),\mathbf{x}(tM)\right]\\
		&=\max\left\{\frac{\left|\hat{z}^{t-1}_{i}-\hat{z}^{t}_{i}\right|}{2^{|i|}}\colon i\in\{0,\ldots,M-1\}\right\}\\
		&<\delta,
	\end{align*}
	with the last inequality following from \ref{Eqn: z-hat-inequality}. Therefore $(\mathbf{x}(l))_{l=0}^\infty$ is a $\delta$-pseudo-orbit.
	
	Now we show that  it cannot be $\e$-shadowed by a true orbit. Suppose $\mathbf{y}\in I_F$ satisfies $D(\sigma^l(\mathbf{y}),\mathbf{x}(l))<\e$ for all integers $l\geq 0$. Then for all $t\in\{0,\ldots,k\}$, we have
	\begin{align*}
		\frac{\rho}{2^M}>\e>D\left[\sigma^{tM}(\mathbf{y}),\mathbf{x}(tM)\right]&\geq\max\left\{\left|y_{tM}-x^{tM}_0\right|,\frac{\left|y_{tM+M}-x^{tM}_M\right|}{2^M}\right\}\\
		&=\max\left\{\left|y_{tM}-z_t\right|,\frac{\left|y_{tM+M}-z_t\right|}{2^M}\right\}.
	\end{align*}
	Thus the point $(y_{tM},y_{tM+M})\in F_\Omega^M$ satisfies $\max\{|y_{tM}-z_t|,|y_{tM+M}-z_t\}<\rho$, so by our choice of $\rho$, we have $y_{tM+M}=y_{tM}$. 
	
	Therefore the sequence $(y_0,y_M,y_{2M},\ldots,y_{kM})$ is a constant sequence. However, we have $|y_0-z_0|=|y_0-a|<\e$, and $|y_{kM}-z_M|=|y_{kM}-1|<\e$. This contradicts the choice of $\e<(1-a)/2$. Therefore $(I_F,\sigma_F)$ does not have the shadowing property.
\end{proof}

Note that if an LF-inducing set $\Omega$ is closed under multiplicative inverses, then $F_\Omega^{-1}=F_\Omega$, and we may apply Theorem~\ref{Theorem: F union F^{-1} has dense periodic}. From this observation and the rest of the results on the dynamics of LF-inducing relations, we get the following examples.

\begin{example}~
	None of the examples discussed in this section have the shadowing property (Proposition~\ref{Proposition:Contains diagonal means no shadowing}) but they have the following.
	\begin{enumerate}
		\item The shift map $\sigma_{F_{\{1/2,3,1/3,2\}}}$ is a homeomorphism on the Lelek fan $I_{F_{\{1/2,3,1/3,2\}}}$ (Proposition~\ref{Proposition: LF-inducing yields Lelek fan (two-sided)}) such that $(I_{F_{\{1/2,3,1/3,2\}}},\sigma_{F_{\{1/2,3,1/3,2\}}})$ is Devaney chaotic (Corollary~\ref{Corollary: LF-transitive} and Theorem~\ref{Theorem: F union F^{-1} has dense periodic}).
		\item The shift map $\sigma^+_{F_{\{1/2,3,1/3,2\}}}$ is a non-injective, continuous function on the Lelek fan $I^+_{F_{\{1/2,3,1/3,2\}}}$ (Proposition~\ref{Proposition: LF-inducing yields Lelek fan (one-sided)}) such that $(I^+_{F_{\{1/2,3,1/3,2\}}},\sigma^+_{F_{\{1/2,3,1/3,2\}}})$ is Devaney chaotic (Corollary~\ref{Corollary: LF-transitive} and Theorem~\ref{Theorem: F union F^{-1} has dense periodic}).
		\item The shift map $\sigma_{F_{\{1/2,3,1/3,2,1\}}}$ is a homeomorphism on the Lelek fan $I_{F_{\{1/2,3,1/3,2,1\}}}$ (Proposition~\ref{Proposition: LF-inducing yields Lelek fan (two-sided)}) such that $(I_{F_{\{1/2,3,1/3,2,1\}}},\sigma_{F_{\{1/2,3,1/3,2,1\}}})$ is Devaney chaotic (Corollary~\ref{Corollary: LF-transitive} and Theorem~\ref{Theorem: F union F^{-1} has dense periodic}) and topologically mixing (Proposition~\ref{Proposition: LF-inducing mixing}), and has the specification property (Theorem~\ref{Theorem: three line has specification}).
		\item The shift map $\sigma^+_{F_{\{1/2,3,1/3,2,1\}}}$ is a non-injective, continuous function on the Lelek fan $I^+_{F_{\{1/2,3,1/3,2,1\}}}$ (Proposition~\ref{Proposition: LF-inducing yields Lelek fan (one-sided)}) such that $(I^+_{F_{\{1/2,3,1/3,2,1\}}},\sigma^+_{F_{\{1/2,3,1/3,2,1\}}})$ is Devaney chaotic (Corollary~\ref{Corollary: LF-transitive} and Theorem~\ref{Theorem: F union F^{-1} has dense periodic}) and topologically mixing (Proposition~\ref{Proposition: LF-inducing mixing}), and has the specification property (Theorem~\ref{Theorem: three line has specification}).
	\end{enumerate}
\end{example}

\section{Open problems}

We conclude the paper by stating the following open problems.

\begin{problem}
	Does Lelek fan admit a surjective shadowing map?
\end{problem}

\begin{problem}
Is the Lelek fan the only smooth fan that admits homeomorphisms with the specification property?
\end{problem}

\section{Acknowledgments}

This work is partially funded by European Union - NextGenerationEU grant IP-UNIST-44.

\vphantom{}

\noindent  Goran Erceg\\
             Faculty of Science, University of Split, Rudera Bo\v skovi\' ca 33, Split 21000, Croatia\\
{{goran.erceg@pmfst.hr}       }  \\

\noindent  James  Kelly\\
             Department of Mathematics,  Christopher Newport University, 1 Ave. of the Arts, Newport News, Virginia 23606, USA\\
{{james.kelly@cnu.edu}       }  \\

\noindent  Judy  Kennedy\\
             Department of Mathematics,  Lamar University, 200 Lucas Building, P.O. Box 10047, Beaumont, Texas 77710, USA\\
{{kennedy9905@gmail.com}       }  \\

\noindent Christopher Mouron \\
              Department of Mathematics and Statistics, Rhodes College, 2000 North Parkway, Memphis, TN 38112, USA \\
                  {mouronc@rhodes.edu}           

\vphantom{}
     
\noindent Van Nall\\
             Department of Mathematics, University of Richmond, 
               410 Westhampton Way, University of Richmond, VA 23172, USA\\
               {vnall@richmond.edu}       				\-
				

\end{document}